\DeclareSymbolFontAlphabet{\mathbb}{AMSb}
\DeclareSymbolFontAlphabet{\mathbbl}{bbold}
\newcommand{\prism}{{\mathlarger{\mathbbl{\Delta}}}}
\theoremstyle{plain}
\newtheorem{theorem}{Theorem}[section]
\newtheorem{proposition}[theorem]{Proposition}
\newtheorem{lemma}[theorem]{Lemma}
\newtheorem{corollary}[theorem]{Corollary}
\newtheorem{question}[theorem]{Question}
\theoremstyle{definition}
\newtheorem{definition}[theorem]{Definition}
\newtheorem{example}[theorem]{Example}
\newtheorem{remark}[theorem]{Remark}
\newcommand{\calO}{\mathcal{O}}
\newcommand{\cts}{\mathrm{cts}}
\newcommand{\cycl}{\mathrm{cycl}}
\newcommand{\en}{\mathrm{en}}
\newcommand{\et}{\mathrm{{\acute{e}t}}}
\newcommand{\Gal}{\mathrm{Gal}}
\newcommand{\Gm}{{\mathbb{G}_m}}
\newcommand{\Ga}{\mathbb{G}_a}
\newcommand{\Hom}{\mathrm{Hom}}
\newcommand{\HT}{\mathrm{HT}}
\newcommand{\Id}{\textrm{Id}}
\renewcommand{\inf}{\mathrm{inf}}
\newcommand{\N}{\mathbb{N}}
\newcommand{\Nilp}{\mathrm{Nilp}}
\newcommand{\calPerf}{\mathcal{P}\!\textit{erf}\,}
\newcommand{\Q}{\mathbb{Q}}
\newcommand{\Rep}{\mathrm{Rep}}
\newcommand{\Sen}{\mathrm{Sen}}
\newcommand{\Spa}{\mathrm{Spa}}
\newcommand{\Spf}{\mathrm{Spf}}
\newcommand{\Spec}{\mathrm{Spec}}
\newcommand{\alst}{\alpha}
\newcommand{\alstp}{\alpha^+}
\newcommand{\WCart}{\mathrm{WCart}}
\newcommand{\Z}{\mathbb{Z}}
\newcommand{\Br}{\mathrm{Br}}
\newcommand{\Pic}{\mathrm{Pic}}
\renewcommand{\O}{\mathcal{O}}
\renewcommand{\Vec}{\mathrm{Vec}}
\newcommand{\tf}{[\tfrac{1}{p}]}
\newcommand{\xisomarrow}[1]{%
	\xrightarrow[\raisebox{0.3em}{\smash{\ensuremath{\sim}}}]{#1}
}
\begin{document}

	\title{\MakeLowercase{v}-vector bundles on $p$-adic fields and Sen theory via the Hodge--Tate stack}
	
	\author[J. Ansch\"utz, B. Heuer, A.-C. Le Bras]{Johannes Ansch\"utz, Ben Heuer, Arthur-C\'esar Le Bras}

	\begin{abstract}
        We describe the category of continuous semilinear representations and their cohomology for the Galois group of a $p$-adic field $K$ with coefficients in a completed algebraic closure via vector bundles on the Hodge--Tate locus of the Cartier--Witt stack. This also gives a new perspective on classical Sen theory; for example it explains the appearance of an analogue of Colmez' period ring $B_{\Sen}$ in a geometric way. 
	\end{abstract}
	
	\maketitle
	\tableofcontents
\section{Introduction}
\label{sec:introduction}

Let $K$ be a $p$-adic field, i.e., a mixed characteristic $(0,p)$ complete discretely valued field with perfect residue field. 
Let $C=\widehat{\overline{K}}$ be the $p$-adic completion of an algebraic closure of $K$. 
The field $C$ comes equipped with a continuous action of the Galois group $G_K:=\mathrm{Gal}(\overline{K}/K)$ of $K$. 
Let $\mathrm{Rep}_C(G_K)$ be the category of semilinear continuous $G_K$-representations on finite dimensional $C$-vector spaces.
The goal of this article is to give a new answer to the following classical question:
\begin{question}\label{q:q-intro}
	How can we describe the category $\mathrm{Rep}_C(G_K)$ in terms of linear algebraic data?
\end{question}

Our first main result is a new description of this category in terms of the Hodge--Tate locus in the Cartier--Witt stack of Bhatt--Lurie (notations to be explained below):
 \begin{theorem}\label{t:main-thm-intro}
 	There is a natural equivalence of categories
\[\mathrm{Rep}_C(G_K)\to 2\text{-}\varinjlim\limits_{L/K} \Vec([\mathrm{Spf}(\mathcal{O}_L)^{\rm HT}/\Gal(L/K)])[\tfrac{1}{p}]\]
where $L|K$ ranges through finite Galois extensions of $K$ in $C$ and $\Vec$ is the category of vector bundles on the stack quotient $[\mathrm{Spf}(\mathcal{O}_L)^{\rm HT}/\Gal(L/K)]$.
\end{theorem}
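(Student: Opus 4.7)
My approach is to unwind the right-hand side into explicit linear-algebraic data and then compare with the classical formulation of Sen theory. Using the Bhatt--Lurie description of the Hodge--Tate locus, I would first identify vector bundles on $\mathrm{Spf}(\mathcal{O}_L)^{\rm HT}$ with pairs $(M,\theta)$ consisting of a finite projective $\mathcal{O}_L$-module together with a Higgs-type operator arising from the gerbe structure of $\mathrm{Spf}(\mathcal{O}_L)^{\rm HT}$, banded by a Tate twist of the divided-power envelope of $\Ga$ along the cotangent complex of $\mathcal{O}_L/\mathbb{Z}_p$. After inverting $p$, the torsion module $\Omega^1_{\mathcal{O}_L/\mathbb{Z}_p}$ is killed, but the divided-power structure survives as a single $L$-linear Sen operator $\phi\colon V\to V$ on $V=M\tf$.

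Next, the quotient $[\mathrm{Spf}(\mathcal{O}_L)^{\rm HT}/\Gal(L/K)]$ records a $\Gal(L/K)$-equivariant structure, so its category of vector bundles (inverted at $p$) consists of triples $(V,\phi,\rho)$ with $V$ a finite-dimensional $L$-vector space, $\rho$ a semilinear $\Gal(L/K)$-action, and $\phi$ the Sen operator, all mutually compatible. Passing to the 2-colimit over finite Galois extensions $L/K\subset C$ yields finite-dimensional $\overline{K}$-vector spaces equipped with a continuous semilinear $G_K$-action and a Sen operator, each defined over some finite $L$.

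To construct the comparison functor in the direction of the theorem, I would appeal to classical Sen theory: to $W\in\Rep_C(G_K)$, Sen's decompletion attaches a finite-dimensional $\widehat{K_\infty}$-subspace $W_{\Sen}\subset W$ stable under $\Gal(K_\infty/K)$, together with a canonical Sen operator $\phi_W$. Combined with further descent along the tower $K_\infty/K$ down to a finite Galois level, this produces a triple $(V,\phi,\rho)$ over some sufficiently large finite Galois $L/K$. Full faithfulness and essential surjectivity of the resulting functor would then follow from Tate--Sen cohomology computations for $C$ and $\widehat{K_\infty}$, together with the identification of the geometric Sen operator (coming from the divided-power structure on the Hodge--Tate stack) with the classical one.

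The main obstacle will be Step 1: pinning down the structure of $\mathrm{Spf}(\mathcal{O}_L)^{\rm HT}$ for arbitrarily ramified $\mathcal{O}_L$, and showing that its $p$-inverted category of vector bundles is genuinely the category of $L$-vector spaces equipped with a Sen operator in the correct normalization. The ramification of $\mathcal{O}_L/\mathbb{Z}_p$ complicates the cotangent complex and hence the gerbe, so one must carefully track the various Breuil--Kisin twists to pin down $\phi$. A closely related subtlety is matching the geometric and Sen-theoretic operators---the content of the slogan ``the Hodge--Tate stack encodes Sen theory''---which is exactly what makes the comparison nontrivial.
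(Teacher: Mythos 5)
Your strategy of building the comparison functor from the representation side by Sen decompletion runs into exactly the functoriality problem that motivates the paper. Sen's functor $S_0$ onto Sen modules is fully faithful only on small representations, and its extension $S$ (defined after passage to a finite level) is \emph{not} compatible with Galois descent data---the paper points this out explicitly in the introduction, attributing it to the dependence on the embedding $K\subseteq C$. So producing a triple $(V,\phi,\rho)$ over some finite Galois $L/K$ from $W\in\Rep_C(G_K)$ ``by Sen decompletion plus descent'' is precisely the nontrivial thing that does \emph{not} work naively. The paper avoids this by going the other way: it constructs a \emph{canonical}, choice-free functor $\alst_L\colon\calPerf(\Spf(\O_L)^{\rm HT})[\tfrac1p]\to\calPerf(\Spa(L)_v)$ from the lift $\Spf(\O_C)\to\Spf(\O_L)^{\rm HT}$, shows it is fully faithful, and shows every representation lands in the image after base change to a large enough $L$. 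Because $\alst_L$ is canonical it automatically respects $\Gal(L/K)$-equivariant structures, so the colimit statement follows formally. You do use classical Sen theory (for essential surjectivity, i.e.\ decompletion), but not to \emph{define} the functor.

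Your proposed proof of full faithfulness is also a genuine gap. ``Tate--Sen cohomology computations for $C$ and $\widehat{K_\infty}$'' is not enough: the paper explicitly remarks that reducing to Breuil--Kisin twists and invoking Tate's theorem does not work, because the isogeny category of perfect complexes on $\Spf(\O_K)^{\rm HT}$ is not generated under colimits by Breuil--Kisin twists. The actual ingredient is the computation $R\Gamma(G_K,B_\en)\cong K$ for a new period ring $B_\en=\O(Z_\pi)[\tfrac1p]$, where $Z_\pi$ is the base change of the Breuil--Kisin torsor $\rho_\pi$ along $\Spf(\O_C)\to\Spf(\O_K)^{\rm HT}$. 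That calculation (which uses Tate's normalized traces and a careful estimate \`a la Colmez in a ring strictly smaller than $B_{\rm Sen}$) is the technical heart of the argument, and nothing in your proposal produces it.

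Finally, your identification in Step 1 is too coarse: after inverting $p$, $\calPerf(\Spf(\O_L)^{\rm HT})[\tfrac1p]$ is \emph{not} all finite-dimensional $L$-vector spaces with an endomorphism. It is the full subcategory where the Sen operator is ``nearly Hodge--Tate'': its eigenvalues must lie in $\Z+\delta_{\O_L/\Z_p}^{-1}\mathfrak{m}_{\overline{K}}$ (equivalently, $\Theta_\pi^p-e^{p-1}\Theta_\pi$ acts topologically nilpotently), and the identification $\Spf(\O_L)^{\rm HT}\cong BG_\pi$ depends on a choice of uniformizer that is generally not preserved by $\Gal(L/K)$, which makes the equivariant description more delicate than ``triples $(V,\phi,\rho)$.'' This constraint does wash out in the $2$-colimit because the different shrinks as $L$ grows, but it is essential for pinning down the image at each finite level and is precisely what makes the comparison with $\Rep_C(G_K)$ an equivalence only in the limit.
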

\subsection{The perspective of Sen theory}
In order to motivate \Cref{t:main-thm-intro} and explain its relevance, let us first recall that 
the first approach to answer \Cref{q:q-intro} is provided by \textit{Sen theory}: 
 Let $K_\infty$ be the (uncompleted) cyclotomic $\Z_p$-extension of $K$. 
Sen (\cite{{sen1980continuous}}) constructs a functor 
\[ S:\mathrm{Rep}_C(G_K)\to \Big\{\begin{array}{@{}c@{}l}\text{finite dim.\ $K_\infty$-vector spaces $W$}\\ \text{with an endomorphism $\theta:W\to W$}\end{array}\Big\}.\]

This functor is not full, but on a certain full sub-category of ``small'' $C$-representations, it factors through a fully faithful functor $S_0$ into \textit{Sen modules} over $K$, which is the category of pairs $(M,\theta)$, where $M$ is a finite dimensional $K$-vector space and $\theta : M \to M$ an endomorphism. 
As any object in $\mathrm{Rep}_C(G_K)$ is in the domain of this functor after passing to a finite subextension $K\subseteq K_n\subseteq K_\infty$, this defines the functor $S$ in the colimit over $n$. 
One might try to  descend this construction back from $K_n$ to $K$, but it is not true that the functor preserves Galois descent data: the dependence on the embedding $K\subseteq C$ causes a subtle issue with functoriality of the construction.

Regarding essential surjectivity, if one tries to go into the other direction, from Sen modules to Galois representations, one can send a pair $(M,\theta)$ to the Galois representation on $V=M\otimes_K C$ with $\sigma \in G_K$ acting via
\[
\sigma(m \otimes c) = \exp(\log(\chi(\sigma))\cdot \theta)\cdot m \otimes \sigma(c)
\]
($\chi$ is the cyclotomic character) assuming that the series defining $\exp(\log(\chi(\sigma))\cdot \theta)$ converges for all $\sigma \in G_K$. 
This defines a fully faithful functor from Sen modules satisfying this convergence condition to $\mathrm{Rep}_C(G_K)$, which is a partial inverse to $S_0$. 
But this does not capture all Sen modules.

When the residue field $k$ of $K$ is algebraically closed, Sen shows that it is still possible to extend $S_0$ to an equivalence
\[ \mathrm{Rep}_C(G_K)\xrightarrow{\sim} \{\text{Sen modules}\}.\]
However, this equivalence is highly non-canonical as it depends on additional choices. 
For general $K$ (i.e.\ when $k$ is not algebraically closed, e.g.\ finite) the situation is even more subtle: one cannot expect even to have a fully faithful functor from $\mathrm{Rep}_C(G_K)$ to Sen modules, as  there are semilinear continuous $C$-representations $V$ of $G_K$ for which $\mathrm{End}(V)$ is a non-commutative division algebra (see \cite[Remark after Theorem~10]{sen1980continuous} or \cite[Remarque 2 after Th\'eor\`eme~2.14]{fontaine2004arithmetique}), but the endomorphism algebra $\mathrm{End}(M,\theta)$ of a Sen module can never be of this form.
But even in special cases where such a functor exists, one cannot expect to describe the essential image in a simple linear-algebraic way: for example, such a functor exists in the much simpler case of rank one, but we will show in \Cref{t:Picv} that it identifies the group of isomorphism classes of continuous semi-linear characters $G_K\to C^\times$ with the kernel of a certain canonical surjection $K\to \Br(K)[p^\infty]$.

In summary, $S$ allows us to characterise $C$-representations up to passage to a finite extension of $K$, or subject to a convergence condition, but in general does not describe the category $\Rep_C(G_K)$. 
It therefore stops short of giving a complete answer to \Cref{q:q-intro}. 
Indeed, the relation between $C$-linear representations and Sen modules is not as simple as it may seem at first sight. 

\subsection{The perspective of the $p$-adic Simpson correspondence}
To put the problem in perspective and analyze this issue, it is useful to recast the above setup in the much more general framework of $p$-adic non-abelian Hodge theory. 
Let $X$ be an adic space over $\mathrm{Spa}(\Q_p)$. 
The v-site of $X$ is the category of all perfectoid spaces $S$ over $X$ endowed with Scholze's v-topology (generated by surjective morphisms between affinoid perfectoid spaces over $X$). 
It has a structure sheaf $\mathcal{O}$, sending $S$ to $\mathcal{O}_S(S)$. 
The category of v-vector bundles $\mathrm{Vec}(X_v,\mathcal{O})$ is defined as the category of finite locally free sheaves of $\mathcal{O}$-modules on $X_v$. 
If $X=\mathrm{Spa}(K)$, then the choice of $K\subseteq C$ induces an equivalence $\mathrm{Vec}(X_v,\mathcal{O})=\mathrm{Rep}_C(G_K)$ to the category studied by Sen theory. 
In particular, \Cref{q:q-intro} can be recast as asking about the category of v-vector bundles on $\Spa(K)$.

If instead $X$ is smooth over a complete algebraically closed extension $L$ of $\Q_p$, then  $\mathrm{Vec}(X_v,\mathcal{O})$ is canonically equivalent to the category of \textit{generalized representations} on $X$ introduced by Faltings. 
It is a much richer category than the category of analytic vector bundles on $X$ (which, in the example $X=\mathrm{Spa}(K)$ is nothing but the category of finite dimensional $K$-vector spaces). 
In general, $p$-adic non-abelian Hodge theory seeks to describe the category $\mathrm{Vec}(X_v,\mathcal{O})$ in terms of \textit{Higgs bundles}: let $X$ be a smooth rigid space over a complete algebraically closed non-archimedean extension $L$ of $\Q_p$. 
Then a Higgs bundle is a pair $(\mathcal{E}, \theta)$, where $\mathcal{E}$ is an analytic vector bundle on $X$ and $\theta: \mathcal{E} \to \mathcal{E} \otimes_{\mathcal{O}_X} \Omega_X^1(-1)$ is an $\mathcal{O}_X$-linear map such that $\theta \wedge \theta=0$. 
In this setting, an instance of such a non-abelian Hodge correspondence is the ``local $p$-adic Simpson correspondence'', due to Faltings \cite{faltings2005p} and studied further by Abbes--Gros--Tsuji \cite{abbes2016p} and Wang \cite{wang2021p} among others: suppose that $X$  admits an \'etale map (a \textit{toric chart})
 \[
 c:X \to \mathbb{T}_L^n = \mathrm{Spa}(L\langle T_1^{\pm 1}, \dots, T_n^{\pm 1} \rangle) 
 \]
 to the $n$-dimensional rigid torus over $L$ for some $n\geq 1$. 
Then any such chart induces an equivalence 
 \[ LS_c:\{\text{small v-vector bundles}\}\xrightarrow{\sim}\{\text{small Higgs bundles}\}.\]
  Here smallness is a technical condition which means that the considered v-vector bundle or Higgs bundles are $p$-adically close to the unit object. 
Towards the goal of describing v-vector bundles on $X$,
 the local $p$-adic Simpson correspondence has two serious drawbacks. 
First, it only relates \textit{small} objects on both sides. 
As any smooth rigid space over $L$ admits a toric chart \'etale-locally, and any v-vector bundle or Higgs bundle becomes small \'etale-locally, one might try to get rid of this smallness assumption (and of the condition on $X$) by an appropriate gluing procedure. 
However, and this is the second issue, the equivalence of categories is \textit{not} functorial in $X$, but only in the pair $(X, c)$ formed by $X$ and the choice of a toric chart $c$. 
In fact, it is not true that one always has an equivalence between all v-vector bundles and all Higgs bundles on any smooth rigid space $X$ over $L$ (this fails already  for the $2$-dimensional unit ball, cf. 
\cite[\S6]{vlinebundles}). 
\medskip

Formulated in this way, this approach to describing v-vector bundles on smooth $X$ over $L$ is entirely parallel to the arithmetic situation of $X=\mathrm{Spa}(K)$ in terms of Sen theory, with Higgs bundles (resp.\ small Higgs bundles) playing the role of Sen modules (resp.\ Sen modules satisfying the above convergence condition)\footnote{To make the analogy even more apparent, note that if $\nu_\ast : \widetilde{X_v} \to \widetilde{X_{\rm et}}$ is the natural morphism of topoi, then for $X$ smooth over $L$, Scholze proved in \cite{scholze2013p} that there is an isomorphism $R^1\nu_\ast \mathcal{O} = \Omega_X^1(-1)$ , while for $X=\mathrm{Spa}(K)$ Tate's results on continuous cohomology \cite{tate1967p} imply that $R^1\nu_\ast \mathcal{O}=\O$ on $\Spa(K)$. 
So when Higgs bundles are defined in terms of $R^1\nu_{\ast}\O$, then Sen modules \text{are} Higgs bundles on $\Spa(K)$.}. 
In both cases, the statement in the ``small'' case is not functorial enough (it is only so after fixing a toric chart, or an embedding $K\to C$) to be globalized for the \'etale topology, and thus a complete description of v-vector bundles on $X$ via \'etale data is missing.

\subsection{A new perspective via the Hodge--Tate stack}

The starting point of this article is the idea that the recent introduction of ring stacks in $p$-adic Hodge theory by Drinfeld \cite{drinfeld2020prismatization} and Bhatt--Lurie  \cite{bhatt2022absolute}, \cite{bhatt2022prismatization} will help finding a more canonical formulation of such results and thereby shed new light on the local and global $p$-adic Simpson correspondences. 
The goal of this paper is to realize this concretely in the case $X=\mathrm{Spa}(K)$. 
Although this is quite a simple situation, it is already very much non-trivial as evidenced by the various subtleties of Sen theory, and we find it interesting that many features are already visible in this case. 
We regard this as a proof-of-concept for the general case.
\medskip

Let us make this strategy precise in the case of interest. 
We will only need a small part of the work of Drinfeld and Bhatt--Lurie; we use the notations of \cite{bhatt2022absolute}. 
To any $p$-complete ring $R$, Bhatt--Lurie attach an fpqc stack $\mathrm{Spf}(R)^{\rm HT}$, the \textit{Hodge--Tate stack of $\mathrm{Spf}(R)$}, on the category of (commutative) rings in which $p$ is nilpotent, with the property that, for $R$ quasi-syntomic, the category $\mathcal{D}(\mathrm{Spf}(R)^{\rm HT})$ of quasicoherent complexes on $\mathrm{Spf}(R)^{\rm HT}$ is naturally equivalent to the derived category of crystals of $\overline{\mathcal{O}}_\prism$-modules on the (absolute) prismatic site of $\mathrm{Spf}(R)$\footnote{Hence, (a relative variant of) the Hodge--Tate stack can be thought of as an analogue for Hodge--Tate cohomology of $p$-adic formal schemes of Simpson's de Rham stack for de Rham cohomology of complex varieties.}. 
If $R$ is perfectoid, $\mathrm{Spf}(R)^{\rm HT}$ is canonically isomorphic to $\mathrm{Spf}(R)$. 
This applies for example to $R=\mathcal{O}_C$. 
As a consequence of this isomorphism, the natural morphism $\mathrm{Spf}(\mathcal{O}_C)\to \mathrm{Spf}(\mathcal{O}_K)$ lifts to a Galois-equivariant morphism
\[
\mathrm{Spf}(\mathcal{O}_C) \to \mathrm{Spf}(\mathcal{O}_K)^{\rm HT}.
\]
Therefore, we obtain by pullback a canonical functor 
\[
\alst_K \colon \calPerf(\mathrm{Spf}(\mathcal{O}_K)^{\rm HT})[\tfrac{1}{p}]\to \calPerf(\Spa (K)_v)
\]
from the isogeny category of the category of perfect complexes on the Hodge--Tate stack of $\mathrm{Spf}(\mathcal{O}_K)$ towards v-perfect complexes on $\mathrm{Spa}(K)$. 

On the other hand, any choice of a  uniformizer $\pi$ of $K$ induces a Breuil--Kisin prism which induces a map
\[
\rho_\pi: \mathrm{Spf}(\mathcal{O}_K) \to \mathrm{Spf}(\mathcal{O}_K)^{\rm HT}
\]
that Bhatt--Lurie show is a faithfully flat torsor under a certain group $G_{\pi}$. Considering the tangent action induces on the pullback of any module along  $\rho_\pi$ a natural endomorphism $\Theta_\pi$. After inverting $\pi$, this defines a functor
\[
\beta_\pi: \calPerf(\mathrm{Spf}(\mathcal{O}_K)^{\rm HT})\tf\to \calPerf (K[\Theta_\pi]).
\]

The basic idea of our approach to non-abelian Hodge theory via the Cartier--Witt stack is now to study the diagram 
\[
\begin{tikzcd}
	& \calPerf(\mathrm{Spf}(\mathcal{O}_K)^{\rm HT}
	)\tf \arrow[ld, "\alst_K"'] \arrow[rd, "\beta_\pi"] &                      \\
	\calPerf(\Spa(K)_v) \arrow[rr, "\Sen", dotted] &                                                                    & {\calPerf(K[\Theta_\pi])}
\end{tikzcd}\]
in which we note that the left map is completely canonical, whereas the right map depends on $\pi$ even though neither the target nor source category do. Roughly, the idea is now that taking a preimage under $\alst$ and mapping it down under $\beta_\pi$ should give an analogue of Sen's functor\footnote{More precisely, it is an analogue for Sen's functor  defined using the Kummer tower of $\pi$ rather than the cyclotomic tower. These two constructions are equivalent up to a non-canonical transformation.}. However, as is to be expected from the above discussion of Sen theory, the morphism $\alst_K$ is not essentially surjective, reflecting the fact that $\Sen$ is only partially defined. 

Instead, we have the following description of the above diagram, which is our second main result:
\begin{theorem}[{\Cref{sec:c_k-semil-galo-1-pullback-fully-faithful-up-to-quasi-isogeny}, \Cref{sec:c_k-semil-galo-3-description-of-essential-image}, ~\Cref{sec:an-analytic-variant-corollary-analytic-variant}}]
\label{main-theorem-introduction}
Let $E$ be the minimal polynomial of $\pi$ over $K_0$, the maximal unramified subfield of $K$.
Let $\delta_{\mathcal{O}_K/\Z_p}=(E'(\pi))$ be the different of $\mathcal{O}_K|\Z_p$. 
\begin{enumerate}
\item The functor $\alst_K$ is fully faithful. Its essential image is the bounded derived category of \textit{nearly Hodge--Tate representations}, i.e.\ semilinear continuous $C$-representations of $G_K$ whose Sen operator has all its eigenvalues in $\Z+ \delta_{\mathcal{O}_K/\Z_p}^{-1}\cdot \mathfrak{m}_{\overline{K}}$. 
\item The functor $\beta_\pi$ is fully faithful. Its essential image consists of those perfect complexes $M$ of $K[\Theta_\pi]$-modules for which $H^\ast(M)$ is finite dimensional over $K$ and $\Theta_\pi^p-E'(\pi)^{p-1}\Theta_\pi$ acts topologically nilpotently on $H^\ast(M)$. 
\end{enumerate} 
\end{theorem}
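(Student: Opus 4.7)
The plan is to analyze both functors through the $G_\pi$-torsor presentation $\rho_\pi$ of the Hodge--Tate stack. Since $\rho_\pi$ is a faithfully flat $G_\pi$-torsor, pullback along it identifies perfect complexes on $\Spf(\calO_K)^{\mathrm{HT}}$ (after inverting $p$) with $G_\pi$-equivariant perfect complexes on $K$, and under this identification $\beta_\pi$ becomes the forgetful functor to the underlying complex equipped with its induced Lie algebra action $\Theta_\pi$. Full faithfulness of $\beta_\pi$ is therefore built into the torsor description, and the essential image is the image of the forgetful functor from $G_\pi$-comodules to $\Theta_\pi$-modules. To describe it one computes the Hopf algebra of $G_\pi[\tfrac{1}{p}]$ explicitly from the Breuil--Kisin prism with distinguished element $E$: one expects it to be topologically generated over $K$ by $\Theta$ subject to a Frobenius-type relation of the shape $\Theta^p = E'(\pi)^{p-1}\Theta$, so that a $K[\Theta_\pi]$-module with finite-dimensional cohomology extends to a $G_\pi$-comodule exactly when $\Theta_\pi^p - E'(\pi)^{p-1}\Theta_\pi$ is topologically nilpotent on $H^\ast(M)$. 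This yields part~(2).

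For part~(1), composing with the equivalence from~(2), the functor $\alpha_K$ sends a $G_\pi$-equivariant $K$-module $(M,\Theta_\pi)$ to $M\otimes_K C$ endowed with the semilinear $G_K$-action obtained by group-theoretic integration of $\Theta_\pi$ along the morphism $G_K \to G_\pi(\calO_C)$ induced by the Galois-equivariant point $\Spf(\calO_C) \to \Spf(\calO_K)^{\mathrm{HT}}$. This is a Kummer-tower analogue of Sen's exponential formula $\sigma \mapsto \exp(\log \tau(\sigma)\cdot \Theta_\pi)$ for a suitable Kummer-type character $\tau$, so that $\Theta_\pi$ corresponds, up to a non-canonical rescaling, to the Sen operator of the resulting $C$-representation. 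Full faithfulness of $\alpha_K$ is then a descent statement, namely that $R\Gamma$ on the Hodge--Tate stack computes continuous Galois cohomology of the associated $C$-representation; this should reduce via the torsor presentation and the perfectoid identification $\Spf(\calO_C)^{\mathrm{HT}} = \Spf(\calO_C)$ to a Sen-decompletion or almost-mathematics argument showing that the map $\Spf(\calO_C) \to \Spf(\calO_K)^{\mathrm{HT}}$ is a v-cover after inverting $p$. The essential image is then controlled by the convergence of the exponential formula: the $\Z$ summand accounts for integer Tate twists, which already sit inside $G_\pi$-comodules as algebraic characters without any convergence condition, while the radius of convergence is dictated by the integral structure of $\mathrm{Lie}(G_\pi)$ as an $\calO_K$-module, canonically identified with $\delta_{\calO_K/\Z_p}^{-1}$. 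This gives exactly the condition that the Sen eigenvalues lie in $\Z + \delta_{\calO_K/\Z_p}^{-1}\mathfrak{m}_{\overline{K}}$.

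The principal technical obstacle is the explicit computation of the Hopf algebra of $G_\pi[\tfrac{1}{p}]$ and the derivation of the Frobenius-type relation $\Theta^p = E'(\pi)^{p-1}\Theta$ from the Breuil--Kisin data; this is the arithmetic heart of the argument and is where the different enters, controlling both the polynomial in part~(2) and the radius of convergence in part~(1). A secondary obstacle is making the descent step for full faithfulness of $\alpha_K$ rigorous, which requires identifying Hodge--Tate stack cohomology with continuous $G_K$-cohomology of the associated $C$-representation. The analytic variant stated as the third reference should then follow by verifying compatibility of all of the above with analytification and passage to the generic fiber, which should be largely formal once the formal/integral statements are in place.
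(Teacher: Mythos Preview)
Your overall architecture is right, and for part~(2) your sketch via the Cartier dual of $G_\pi$ is close to what the paper does (they phrase it as generation by Breuil--Kisin twists $\calO\{n\}$, whose Sen operators are $e\cdot n$, so that the relation $\prod_{i=0}^{p-1}(\Theta_\pi - ei) = \Theta_\pi^p - e^{p-1}\Theta_\pi$ over $k$ drops out naturally; see their \Cref{sec:complexes-Hodge--Tate-cartier-dual-bis}).

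However, you have inverted the difficulty ordering. What you call the ``secondary obstacle''---full faithfulness of $\alst_K$---is in fact the heart of the paper, and your proposed route to it does not work as stated. You suggest this ``should reduce via the torsor presentation and the perfectoid identification \dots\ to a Sen-decompletion or almost-mathematics argument showing that the map $\Spf(\calO_C)\to \Spf(\calO_K)^{\HT}$ is a v-cover after inverting $p$.'' The paper explicitly addresses why the naive reduction fails: after inverting $p$, the category $\calPerf(\Spf(\calO_K)^{\HT})[\tfrac{1}{p}]$ is \emph{not} generated under colimits by the Breuil--Kisin twists, so one cannot simply invoke Tate's computation of $H^\ast(G_K,C(n))$ and be done. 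Nor is there an off-the-shelf almost-mathematics statement that handles this.

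What the paper actually does is introduce a new period ring $B_\en := \calO(Z_\pi)[\tfrac{1}{p}]$, where $Z_\pi$ is the base change of the $G_\pi$-torsor $\rho_\pi$ along $\Spf(\calO_C)\to \Spf(\calO_K)^{\HT}$, and then prove directly that
\[
R\Gamma(G_K, B_\en) \cong K.
\]
This occupies all of Section~3: one first computes the Galois action on $B_\en$ explicitly via the Breuil--Kisin and $A_\inf$ prisms, then compares $B_\en$ (non-canonically, after a finite extension) to a cyclotomic variant $B_\en^{\cycl}$ on which $G_K$ acts through the cyclotomic character, and finally runs a hands-on calculation in the style of Tate and Colmez---writing $g-1$ as an infinite upper-triangular block matrix with diagonal entries $g\chi(g)^n - 1$ and using bounded inverses $\rho_n$ from Tate's work. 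Once this is known, full faithfulness follows because for $(M,\Theta_{\pi,M})$ one has $\alst_K(M) = \mathrm{fib}(M\otimes_K B_\en \xrightarrow{\Theta} M\otimes_K B_\en)$ and hence $R\Gamma(G_K,\alst_K(M)) = \mathrm{fib}(M\xrightarrow{\Theta_{\pi,M}} M) = R\Gamma(\Spf(\calO_K)^{\HT},\mathcal{E})[\tfrac{1}{p}]$.

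For the essential image in part~(1), your convergence-radius heuristic is morally correct, but the actual argument leans on classical Sen theory: one shows $e\cdot\Theta_{\mathrm{Sen}} = \Theta_\pi$ by producing an explicit element $z'\in C^\times$ interpolating between the Kummer-type cocycle $\chi_{\pi^\flat}$ and the cyclotomic character, then invokes Sen's uniqueness characterization of the Sen operator and his Theorem~6 on representations with vanishing Sen operator for the converse.
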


Equivalently, and in close parallel to (1), condition (2) means that on each cohomology group, $E'(\pi)^{-1}\Theta_\pi$ has all generalised eigenvalues in  $\Z+ \delta_{\mathcal{O}_K/\Z_p}^{-1}\cdot \mathfrak{m}_{\overline{K}}$.

Moreover, in \Cref{sec:complexes-Hodge--Tate-1-complexes-on-ht-locus-for-r} we describe $\mathcal{D}(\Spf(\O_K)^\HT)$ in terms of complexes of $\calO_K[\Theta_\pi]$-modules. 
\begin{remark}
	\label{relation-min-wang-gao}
	Related results were proved recently by Min--Wang \cite{min2021hodge}\cite{MinWang22} and Gao \cite{gao2022hodge} in terms of Hodge--Tate prismatic crystals, which are equivalent to vector bundles on $\Spf(\O_K)^{\HT}$ by \cite[Proposition 8.15]{bhatt2022prismatization}. Their results only hold at the abelian level (i.e.\ for vector bundles instead of perfect complexes), but work more generally for smooth rigid spaces of good reduction over a $p$-adic field  (instead of just $X=\mathrm{Spa}(K)$), also using the prismatic formalism. 
	While they use the prismatic site, we adopt the perspective furnished by Drinfeld and Bhatt--Lurie's stacks. For (2) we basically follow Bhatt--Lurie's arguments for $K=\Q_p$.
\end{remark}

It follows that any object is in the essential image of $\beta_\pi$ after passing to a finite extension $L|K$.
For \Cref{q:q-intro}, the crucial point is now that since $\alst_K$ is completely canonical and functorial, the subtleties that keep us from descending Sen's functor disappear for $\alst_K$. We deduce:
\begin{theorem}\label{t:thm-3-intro}
If $L|K$ is a finite Galois extension, the functor $\alst_L$ induces a fully faithful functor
 \[ \alst_{L/K} \colon \calPerf([\mathrm{Spf}(\mathcal{O}_L)^{\rm HT}/{\Gal(L/K)}])[\tfrac{1}{p}] \to \calPerf(\Spa(K)_v)
 \]
  and any object in $\calPerf(\Spa(K)_v)$ lies in the image of $\alst_{L/K}$ for some $L$. 
\end{theorem}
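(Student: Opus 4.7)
The plan is to realise this as a Galois descent statement along the $v$-cover $\Spa(L) \to \Spa(K)$. By $v$-descent for perfect complexes, the category $\calPerf(\Spa(K)_v)$ is equivalent to the category of $\Gal(L/K)$-equivariant perfect complexes on $\Spa(L)_v$, while by definition of the quotient stack, $\calPerf([\Spf(\O_L)^{\HT}/\Gal(L/K)])[\tfrac{1}{p}]$ is the category of $\Gal(L/K)$-equivariant objects in $\calPerf(\Spf(\O_L)^{\HT})[\tfrac{1}{p}]$. Since $\Gal(L/K)$ acts on $\O_L$ preserving $\O_K$, the functoriality of the Hodge--Tate stack equips $\Spf(\O_L)^{\HT}$ with a $\Gal(L/K)$-action, under which the canonical map $\Spf(\O_C) \to \Spf(\O_L)^{\HT}$ becomes equivariant. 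Hence $\alst_L$ is naturally $\Gal(L/K)$-equivariant and induces the functor $\alst_{L/K}$ between the two equivariant categories.

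For full faithfulness, Hom in the equivariant category is the homotopy fixed points of the Hom-complex in the non-equivariant category. Full faithfulness of $\alst_L$ (\Cref{main-theorem-introduction}(1), applied to $L$ in place of $K$) provides the required equivalence of Hom-complexes, which after passage to homotopy fixed points yields full faithfulness of $\alst_{L/K}$.

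For essential surjectivity, let $V \in \calPerf(\Spa(K)_v)$ and regard it as a $\Gal(L/K)$-equivariant object in $\calPerf(\Spa(L)_v)$. It suffices to find $L$ such that the underlying object $V_L$ lies in the essential image of $\alst_L$: the equivariant structure then transports automatically by full faithfulness. By \Cref{main-theorem-introduction}(1) this amounts to the condition that the Sen operator on each cohomology sheaf of $V_L$ has all its eigenvalues in $\Z + \delta_{\O_L/\Z_p}^{-1}\cdot \mathfrak{m}_{\overline{K}}$. Since each cohomology sheaf is a finite-dimensional $C$-representation, its Sen module is finite-dimensional over $K_\infty$ and the Sen operator has only finitely many eigenvalues $\lambda_1,\ldots,\lambda_r \in \overline{K}$, in particular of bounded $p$-adic valuation. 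For each $\lambda_i$ pick $n_i\in\Z$ with $v(\lambda_i - n_i)$ bounded below by some common $-N$, and then choose $L/K$ finite Galois with $v(\delta_{\O_L/\Z_p}) > N$ (a sufficiently wildly ramified extension suffices). This places each $\lambda_i - n_i$ into $\delta_{\O_L/\Z_p}^{-1}\cdot \mathfrak{m}_{\overline{K}}$ and completes the argument.

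The main obstacle I anticipate is the descent step on the Hodge--Tate side: verifying that perfect complexes on the quotient stack $[\Spf(\O_L)^{\HT}/\Gal(L/K)]$ are correctly described as $\Gal(L/K)$-equivariant perfect complexes in the derived isogeny setting, and making the equivariance of $\alst_L$ strictly coherent so that the homotopy fixed points on the two sides can be compared. By contrast, the $p$-adic approximation step---observing that any given finite list of Sen eigenvalues is absorbed into $\delta_L^{-1}\mathfrak{m}_{\overline{K}}$ after a suitably ramified finite extension---is essentially elementary.
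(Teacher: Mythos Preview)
Your proposal is correct and follows essentially the same approach as the paper's proof (Theorem~4.7): full faithfulness of $\alst_{L/K}$ is deduced from full faithfulness of $\alst_L$ together with finite \'etale (equivalently, $v$-) descent of perfect complexes, and essential surjectivity is obtained by observing that any $V$ becomes nearly Hodge--Tate after restricting to $G_L$ for a suitable finite Galois $L/K$, whence the $\Gal(L/K)$-equivariant structure on the preimage is forced by full faithfulness. Your explicit justification of the last step---that finitely many Sen eigenvalues can always be absorbed into $\Z+\delta_{\O_L/\Z_p}^{-1}\mathfrak m_{\overline K}$ by choosing $L$ with sufficiently large different---spells out what the paper leaves implicit, but is not a genuinely different route.
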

This allows us to compute Galois cohomology of semilinear $G_K$-representations, i.e.\ cohomology of vector bundles on $\Spa(K)_v$: 
Let $\mathcal E\in  \calPerf(\mathrm{Spf}(\mathcal{O}_K)^{\rm HT}
)$ and let $V:=\alst_K(\mathcal E)$, $(M,\theta_M):=\beta_\pi(\mathcal E)$. Then
\[R\Gamma(X_v,V)=R\Gamma(\mathrm{Spf}(\mathcal{O}_K)^{\rm HT},\mathcal{E})[\tfrac{1}{p}]=\mathrm{fib}(M\xrightarrow{\theta_M}M).\]

\Cref{t:thm-3-intro} is an easy consequence of \Cref{main-theorem-introduction}.(1). 
The proof of \Cref{main-theorem-introduction} relies crucially on the geometry of $\mathrm{Spf}(\mathcal{O}_K)^{\rm HT}$. Namely, Bhatt--Lurie show that the map $\rho_\pi:\Spf(\O_K)\to \mathrm{Spf}(\mathcal{O}_K)^{\rm HT}$
realizes $\mathrm{Spf}(\mathcal{O}_K)^{\rm HT}$ as the (relative) classifying stack of an explicit group $G_\pi$ (which is either isomorphic to $\mathbb{G}^\sharp_m$ or to $\Ga^\sharp$). 
This gives a concrete description of perfect complexes on this stack, leading to \Cref{main-theorem-introduction}.(2); in particular, let us point out that the (seemingly strange) nearly Hodge--Tate condition appears very naturally through the description of the Cartier dual of $G_\pi$. 
Using this presentation, one can also unravel what the functor $\alst_K$ is doing in terms of the description provided by (2): let $Z_\pi \to \mathrm{Spf}(\mathcal{O}_C)$ be the base change of $\rho_\pi$ along the map $\Spf(\mathcal{O}_C) \to \mathrm{Spf}(\mathcal{O}_K)^{\rm HT}$ mentioned above.
 Let $A_\en = \mathcal{O}(Z_\pi)$ be the ring of functions on $Z_\pi$ and let $B_\en = A_\en[1/p]$. 
The ring $B_\en$ is endowed with an endomorphism $\Theta_\pi$ and a commuting continuous $C$-semilinear $G_K$-action. 
Then $\alst_K$ sends a complex $\mathcal E$ on $\mathrm{Spf}(\mathcal{O}_K)^{\rm HT}$ with $\beta_\pi(\mathcal E)=(M,\theta_M)$ to
\[
\mathrm{fib}\big(M \otimes_{K} B_\en  \xrightarrow{1 \otimes \Theta_\pi + \theta_M \otimes 1}  M \otimes_{K} B_\en \big).
\]
In other words, the ring $B_\en$ functions as a period ring in this context. 
This ring is closely related to Colmez' ring $B_{\rm Sen}$ (introduced by him in \cite{colmez1994resultat} to reformulate the construction of the Sen functor in the style of Fontaine) but is different and has not been considered earlier in the literature as far as we know. 
(Interestingly, it seems to be more easily related to a variant of Sen theory using the Kummer tower, rather than the cyclotomic tower as is usually done. 
Compare \cite{gao2022hodge}.)

Let us remark at this point that our proof of the essential surjectivity aspect in \Cref{t:thm-3-intro}.(1)  uses Sen theory: we have nothing new to say regarding the (key) \textit{decompletion} process in the theory. 
But we stress that $\alst_K$ is a canonical, geometrically defined, functor (which after a choice becomes identified with an explicit Fontaine-type functor). 
This is the main point of the Hodge--Tate stack approach and we believe that this point of view will give a fruitful new perspective on $p$-adic non-abelian Hodge theory in general. In work in progress, we investigate the case of smooth varieties over algebraically closed non-archimedean fields and $p$-adic fields by the same methods.

One might ask if one can use this description to obtain a more explicit description of the category $\Rep_C(G_K)$ in terms of linear algebra data. In order to explore what such a description could look like, we investigate in the last section the rank one case. In dimension $1$, Sen's division algebra counterexample discussed at the beginning of the introduction does not apply and one might hope to extend the fully faithful functor of point (1) of \Cref{main-theorem-introduction} to an equivalence between continuous semilinear $C$-representations of $G_K$ and rank one Sen modules. 
We prove that this is not possible, by showing that there are more objects on the side of Sen modules than on the Galois side. Rather, there is in general a canonical short exact sequence
\[ 0\to \Pic_v(K)\to K\to \Br(K)[p^\infty]\to 0\]
where $\Br(K)=H^2_{\et}(K,\Gm)$ is the Brauer group.
This means that already in the very simple case of rank 1, the category of representations of $G_K$ sees subtle arithmetic information of the $p$-adic field $K$ like the $p$-primary part of the Brauer group (which we recall is related to local class field theory), which are therefore captured by the Hodge--Tate stacks in \Cref{t:main-thm-intro}.

\subsection{Plan of the paper} After some brief recollection on the Hodge--Tate stack (the Hodge--Tate locus in Bhatt--Lurie's Cartier--Witt stack), in \Cref{sec:recollection-ht-cw-stack}, we prove (a more general version of) point (2) of \Cref{main-theorem-introduction} in \Cref{sec:complexes-Hodge--Tate-for-o-k}. 
\Cref{sec:galo-acti-cart} contains some explicit computations describing the Galois action on $B_\en$ (in particular a short discussion of its relation to Colmez' $B_{\rm Sen}$) and its cohomology, which allow us to complete the proof of \Cref{main-theorem-introduction} in \Cref{sec:c_k-semil-galo}. Finally, in \Cref{sec:v-picard-group-of-local-fields} we compute the v-Picard group of $p$-adic fields.

\subsection*{Acknowledgments}

 The authors would like to thank Bhargav Bhatt, Yu Min, Juan Esteban Rodr\'iguez Camargo and Lue Pan for helpful conversations on Sen theory and feedback on a preliminary version of this paper, and Peter Scholze and Matti W\"urthen for useful discussions. We thank Jo\~ao Louren{\c{c}}o for pointing out a mistake in an earlier version. One author (J.A.) wants to thank the organizers of the conference in Darmstadt in October 2022 for the possibility to present the results of this paper. The authors want to thank the referee for their careful comments and their positive feedback.

The second author was funded by Deutsche Forschungsgemeinschaft (DFG, German Research Foundation) under Germany's Excellence Strategy -- EXC-2047/1 -- 390685813, as well as by DFG Project-ID 444845124 -- TRR 326, and was moreover supported by DFG via the Leibniz-Preis of Peter Scholze.

\subsection*{Notations}
\label{sec:notations}

We will use the following notations.
\begin{enumerate}
\item $p$ is a prime,
\item $K$ is a $p$-adic field (complete discretely valued with perfect residue field) with ring of integers $\mathcal{O}_K$ and residue field $k$.
  \item $\Nilp_p$ is the category of (classical, commutative) rings $R$ such that $p$ is nilpotent in $R$,
  \item If $X$ is a $p$-adic formal scheme, we denote by $X^{\prism}$ (resp. $X^{\rm HT}$) the Cartier--Witt stack of $X$ (resp. the Hodge--Tate locus in the Cartier--Witt stack of $X$, or Hodge--Tate stack of $X$), which is denoted $\WCart_X$ (resp. $\WCart_X^{\rm HT}$) in \cite{bhatt2022absolute}, \cite{bhatt2022prismatization}.
\end{enumerate}

\section{Complexes on the Hodge--Tate stack}
\subsection{Recollections on the Hodge--Tate stack for $\mathcal{O}_K$}
\label{sec:recollection-ht-cw-stack}
The \textit{Cartier--Witt stack}
$$\WCart$$ is the groupoid-valued functor on $\mathrm{Nilp}_p$ sending $R$ to the groupoid of \textit{Cartier--Witt divisors}, i.e. $W(R)$-linear maps $\alpha: I \to W(R)$ from an invertible $W(R)$-module $I$ satisfying the conditions that the image of the composite $I \overset{\alpha} \to W(R) \to R$ is a nilpotent ideal of $R$ and the image of the composite $I \overset{\alpha} \to W(R) \overset{\delta} \to W(R)$ generates the unit ideal. Since $p$ is nilpotent in $R$, Zariski-locally on $\mathrm{Spec}(R)$, $I$ is principal generated by some element $d$ and the above two conditions can be formulated concretely by saying that
$$
\alpha(d)=\sum_n V^n [r_n],
$$
 with $r_0$ nilpotent and $r_1 \in R^\times$. The functor $\WCart$ in fact defines an fpqc stack on $\mathrm{Nilp}_p$. The \textit{Hodge--Tate locus}
 $$\WCart^{\rm HT}$$ in the Cartier--Witt stack is defined to be the closed substack defined by the condition that, for $R \in \mathrm{Nilp}_p$, a Cartier--Witt divisor $\alpha: I \to W(R)$ belongs to the full subcategory $\WCart^{\rm HT}(R)$ of $\WCart(R)$ if the composite $I \overset{\alpha} \to W(R) \to R$ is zero (equivalently, in terms of the above explicit formula, if $r_0=0$). 
 
 If $X$ is a bounded $p$-adic formal scheme, we define the \textit{Cartier--Witt stack of $X$}, or \textit{prismatization of $X$}, to be the groupoid-valued functor $$X^\prism$$ on $\mathrm{Nilp}_p$ sending $R$ to the groupoid of pairs $(\alpha: I \to W(R), \eta: \mathrm{Spec}(\overline{W(R)}:=\mathrm{cone}(\alpha)) \to X)$, where $(\alpha: I \to W(R)) \in \WCart(R)$ is a Cartier--Witt divisor and $\eta$ is a map of derived formal schemes. The \textit{Hodge--Tate stack of $X$}, which will be the main player in this text, is defined as the fiber product 
 \[
  \xymatrix{
    X^{\rm HT}  \ar[r] \ar[d] & X^\prism \ar[d] \\
   \WCart^{\rm HT} \ar[r] & \WCart.
  }
\]
For $R\in \mathrm{Nilp}_p$, if $(\alpha: I \to W(R)) \in \WCart^{\rm HT}(R)$, the map $\alpha$ factors through $VW(R)$ and hence one gets a map $\overline{W(R)} \to W(R)/VW(R)=R$. Therefore any point $(\alpha: I \to W(R), \eta: \mathrm{Spec}(\overline{W(R)}) \to X) \in X^{\rm HT}(R)$ gives rise to a map $\mathrm{Spec}(R) \to \mathrm{Spec}(\overline{W(R)}) \overset{\eta} \to X$. This defines a morphism $X^{\rm HT} \to X$, called the Hodge--Tate structure morphism.

\begin{remark}
Note that we have $\WCart=\mathrm{Spf}(\Z_p)^\prism$ and $\WCart^{\rm HT}=\mathrm{Spf}(\Z_p)^{\rm HT}$, as $\mathrm{Spf}(\Z_p)$ is the terminal object in the category of $p$-adic formal schemes.
\end{remark}

A fundamental feature of these stacks is their relation to prisms, which is given by the following simple construction (\cite[Construction 3.10]{bhatt2022prismatization}). Let $X$ be a bounded $p$-adic formal scheme and $(A,I)\in (X)_\prism$ an object of the absolute prismatic site of $X$. If $R$ is an $A$-algebra in $\mathrm{Nilp}_p$, the $A$-algebra structure on $R$ uniquely lifts to a $\delta$-$A$-algebra structure on $W(R)$. Base changing the inclusion $I \subset A$ along $A\to W(R)$ gives a Cartier--Witt divisor $\alpha: I \otimes_A W(R) \to W(R)$ together with a map $\eta: \mathrm{Spf}(\overline{W(R)}) \to \mathrm{Spf}(\overline{A}:=A/I) \to X$ (where the last map comes from the assumption that $(A,I) \in (X)_\prism$), hence a point in $X^\prism(R)$. Therefore, we get a map
$$
\rho_{X,A}: \mathrm{Spf}(A) \to X^\prism
$$  
($A$ is endowed with the $(p,I)$-adic topology). It induces a map
$$
\rho_{X,\overline{A}}: \mathrm{Spf}(\overline{A}) \to X^{\rm HT}.
$$  
   
Let us make this more explicit when $X= \mathrm{Spf}(\mathcal{O}_K)$. Let $\pi\in \mathcal{O}_K$ be a uniformizer. 
This choice yields the Breuil--Kisin prism
\[
  (A_\pi,I_\pi):=(W(k)[[u]],(E(u)))
\]
with $\overline{A_\pi}:=A_\pi/I_\pi\cong \mathcal{O}_K,\ u\mapsto \pi$, where $E(u)\in W(k)[u]$ is the minimal polynomial of $\pi\in K$ over $K_0:=W(k)[1/p]$, and the $\delta$-algebra structure is defined by $\delta(u)=0$, meaning $\varphi(u)=u^p$.
Set\footnote{We note that usually $e$ denotes the absolute ramification index of $K$, i.e., the degree of $E(u)$. 
As this will not be relevant to us, we use the symbol $e$ for something else.}
\[
  e:=E^\prime(\pi)\in \mathcal{O}_K.
\]
We let
\[
  \rho_\pi = \rho_{\mathrm{Spf}(\mathcal{O}_K), \overline{A_\pi}}\colon \Spf(\mathcal{O}_K)\to \mathrm{Spf}(\mathcal{O}_K)^{\rm HT}
\]
be the map induced by $(A_\pi,I_\pi)$ introduced above. 
Concretely, an $\mathcal{O}_K$-algebra $S$ is mapped to the image along $\mathrm{Spf}(\mathcal{O}_K)^{\rm HT}(\mathcal{O}_K)\to \mathrm{Spf}(\mathcal{O}_K)^{\rm HT}(S)$ of the point
\[
  (I_\pi\otimes_{A_\pi} W(\mathcal{O}_K)\to W(\mathcal{O}_K), \mathcal{O}_K\cong A_\pi/I_\pi\to \overline{W(\mathcal{O}_K)}).
\]
\begin{definition}Let $G_\pi$ be the group sheaf of automorphisms of the point $\rho_\pi$, i.e.\ $G_\pi$ is the functor
\[
  S\mapsto \mathrm{Aut}_{\mathrm{Spf}(\mathcal{O}_K)^{\rm HT}(S)}(\Spf(S)\to \Spf(\mathcal{O}_K)\xrightarrow{\rho_\pi} \mathrm{Spf}(\mathcal{O}_K)^{\rm HT})
\]
on $p$-complete $\mathcal{O}_K$-algebras. 
\end{definition}
By \cite[Example 9.6]{bhatt2022prismatization} the group $G_\pi$ identifies concretely with the subgroup
\[
  G_\pi=\{(t,a)\in \mathbb{G}_m^\sharp\ltimes \mathbb{G}_a^\sharp\ |\ t-1=e\cdot a\}
\]
(see \cite[\S3.4]{bhatt2022absolute} for the definition of $\mathbb{G}_m^\sharp$ and $\mathbb{G}_a^\sharp$).
The action of $\mathbb{G}_m^\sharp(S)\cong W^\times[F](S)$ on $\rho_\pi$ is via the natural $W(S)$-multiplication on $I_\pi\otimes_{A_\pi} W(S)$ while the action of $\mathbb{G}_a^\sharp(S)\cong W[F](S)$ is via homotopies on the morphism $\mathcal{O}_K\to \overline{W(S)}$ of animated rings, cf.\ \cite[Construction 9.4]{bhatt2022prismatization}. 
It is useful to note that if $S$ is $p$-torsion free, then the action of an element $(t,a)\in G_\pi(S)$ is uniquely determined by its action via the projection $G_\pi\to \mathbb{G}_m^\sharp\cong W^\times[F]$ on $I_\pi\otimes_{A_\pi} W(S)$. 
Moreover, under this torsion freeness assumption, an element $(a_0, a_1, \ldots )\in W^\times[F](S)\subseteq W(S)$ is uniquely determined by $a_0$. We now have:
\begin{proposition}[{\cite[Proposition~9.5]{bhatt2022prismatization}}]
The action of $G_{\pi}$ makes $\rho_{\pi}:\Spf(\O_K)\to \Spf(\mathcal{O}_K)^\HT$ into a $G_{\pi}$-torsor for the fpqc-topology. In particular, $\rho_{\pi}$ is affine and faithfully flat and induces an isomorphism  $\Spf(\mathcal{O}_K)^\HT=BG_\pi$.
\end{proposition}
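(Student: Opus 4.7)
The statement amounts to the two defining properties of a torsor: that the action map
\[
\mu\colon G_\pi \times \Spf(\O_K) \to \Spf(\O_K) \times_{\Spf(\O_K)^\HT} \Spf(\O_K),\qquad (g,x)\mapsto (x,\, g\cdot x),
\]
is an isomorphism of sheaves on $\Nilp_p$, and that $\rho_\pi$ is affine and faithfully flat. Given these, fpqc-descent then formally yields the equivalence $\Spf(\O_K)^\HT \simeq BG_\pi$.

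\textbf{Step 1 (analysis of the fiber product).} I would first observe that the Hodge--Tate structure morphism $\Spf(\O_K)^\HT \to \Spf(\O_K)$ is, by direct inspection of the formula defining $\rho_\pi$ (which sends a ring map $\O_K \to S$ to the datum whose composite $\O_K = A_\pi/I_\pi \to \overline{W(S)} \to S$ recovers the original map), a retraction of $\rho_\pi$. Composing the two projections with this retraction, any $S$-point of $\Spf(\O_K) \times_{\Spf(\O_K)^\HT} \Spf(\O_K)$ must lie over the diagonal of $\Spf(\O_K) \times \Spf(\O_K)$, i.e.\ is given by a single $x \in \Spf(\O_K)(S)$ together with an automorphism of $\rho_\pi \circ x$ in the groupoid $\Spf(\O_K)^\HT(S)$. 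By the very definition of $G_\pi$ as the automorphism sheaf of $\rho_\pi$, together with functoriality in $x$, this automorphism group is $G_\pi(S)$, providing the inverse to $\mu$.

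\textbf{Step 2 (affineness, flatness, covering).} Step 1 identifies the diagonal with the identity section of the affine flat group scheme $G_\pi \times \Spf(\O_K) \to \Spf(\O_K)$ (using the explicit embedding $G_\pi \subset \mathbb{G}_m^\sharp \ltimes \mathbb{G}_a^\sharp$), and hence shows that $\rho_\pi$ is representable by affine flat morphisms. It remains to verify that $\rho_\pi$ is an fpqc cover, i.e.\ that every $(\alpha, \eta) \in \Spf(\O_K)^\HT(S)$ locally comes from $\rho_\pi$. Fpqc-locally on $S$ I would trivialize the line bundle $I$, write $\alpha(d) = V(u)$ with $u \in W(S)^\times$, and use $\eta\colon \O_K \to \overline{W(S)}$ together with the $\delta$-ring structure on $W(S)$ to produce a $\delta$-ring map $A_\pi = W(k)[[u]] \to W(S)$ carrying the distinguished element of $I_\pi$ to a generator of $I$ and inducing the given map on quotients; by construction this factorizes $(\alpha, \eta)$ through $\rho_\pi$.

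\textbf{Main obstacle.} The principal technical point is the lifting argument in Step 2: fpqc-locally converting an abstract Cartier--Witt divisor in the Hodge--Tate locus, together with the animated ring map $\eta$, into an honest $\delta$-ring homomorphism from the Breuil--Kisin prism into $W(S)$. This involves balancing the explicit presentation of elements of $VW(S)$ (afforded by the Hodge--Tate condition), the $\delta$-structure needed to produce a $\delta$-map out of $A_\pi$, and the compatibility with $\eta$ modulo $I$. Once this local lifting is established, Step 1 together with faithfully flat descent immediately yields the torsor property and the identification $\Spf(\O_K)^\HT \simeq BG_\pi$.
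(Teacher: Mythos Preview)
The paper does not contain its own proof of this proposition: it is stated with a citation to \cite[Proposition~9.5]{bhatt2022prismatization} and used as a black box. So there is no proof in the paper to compare against.

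That said, your outline is the natural one. Step~1 is correct and essentially formal: the retraction furnished by the Hodge--Tate structure morphism forces the two legs of any $S$-point of the fiber product to agree in $\Spf(\O_K)(S)$, reducing to an automorphism of $\rho_\pi\circ x$, which is a point of $G_\pi$ by definition. This step requires no further input.

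Step~2 is where all the content lies, and your sketch is too thin to count as a proof. Writing $\alpha(d)=V(u)$ with $u\in W(S)^\times$ is fine, but the sentence ``use $\eta$ together with the $\delta$-ring structure on $W(S)$ to produce a $\delta$-ring map $A_\pi\to W(S)$ carrying the distinguished element of $I_\pi$ to a generator of $I$ and inducing the given map on quotients'' hides the entire difficulty. The map $\eta\colon \O_K\to \overline{W(S)}$ is a map of \emph{animated} rings (since $\overline{W(S)}$ is only a derived quotient when the Cartier--Witt divisor is not a non-zero-divisor), and lifting $\eta(\pi)$ to an element of $W(S)$ which is simultaneously a $\delta$-ring image of $u$ (i.e.\ has the correct Frobenius behaviour) and makes $E(u)$ land on a generator of $I$ is not automatic. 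In Bhatt--Lurie's argument this is handled by a deformation-theoretic computation of the cotangent complex of $\rho_\pi$ (showing it is formally \'etale after accounting for $G_\pi$) together with a separate check of surjectivity on field-valued points; your Step~2 would need to either reproduce that calculation or give an explicit fpqc-local construction of the required lift, neither of which is present.
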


To describe $G_\pi$ even more concretely, note that the projection $(t,a)\to a$ yields an isomorphism 
\[G_\pi\cong \mathbb{G}_a^\sharp=\Spf(\mathcal{O}_K[\frac{a^n}{n!}\ |\ n\geq 0]^{\wedge}_p)\]
 of formal schemes, such that the comultiplication on $\mathcal{O}_{G_\pi}$ identifies with the formal group law
\[
  a+b+e\cdot a b.
\]
If $e\in \mathcal{O}_K^\times$, then $G_\pi\cong \mathbb{G}_m^\sharp$ via the first projection, and if $e\in \mathfrak{m}_{K}:=\pi\cdot \mathcal{O}_K$, then $G_\pi\cong \mathbb{G}_a^\sharp$ given by the maps
\[
  G_\pi\to \mathbb{G}_a^\sharp,\ a\mapsto \frac{\log(ea+1)}{e}=\sum\limits_{n\geq 1}(-e)^{n-1}\frac{a^n}{n}
\]
and
\[
  \mathbb{G}_a^\sharp\to G_\pi,\ a\mapsto \frac{\mathrm{exp}(ea)-1}{e}=\sum\limits_{n\geq 1} e^{n-1}\frac{a^n}{n!}.
\]

\subsection{From complexes on the Hodge--Tate stack to Sen modules}
\label{sec:complexes-Hodge--Tate-for-o-k}
We keep the notations of the previous section. 
In this section, following very closely an argument of \cite{bhatt2022absolute},  we want to give an explicit description of the $\infty$-category of quasicoherent complexes
\[
\mathcal{D}(\mathrm{Spf}(\mathcal{O}_K)^{\rm HT})
\]
on the Hodge--Tate stack $\mathrm{Spf}(\mathcal{O}_K)^{\rm HT}$ of $\mathcal{O}_K$ (cf. 
\cite{bhatt2022absolute}[Definition 3.5.1]). 
We stress that this description is \textit{not} canonical but involves the choice of a uniformizer $\pi \in \mathcal{O}_K$.

The choice of such a $\pi \in \mathcal{O}_K$ yields a faithfully flat morphism
\[
  \rho_\pi\colon \Spf(\mathcal{O}_K)\to \mathrm{Spf}(\mathcal{O}_K)^{\rm HT},
\]
whose automorphism group sheaf is $G_\pi$ as in \Cref{sec:recollection-ht-cw-stack}.
As in \cite[Construction 3.5.4]{bhatt2022absolute} we can construct an automorphism of the projection
\[
  \mathrm{Spf}(\mathcal{O}_K)^{\rm HT}\times_{\Spf(\mathcal{O}_K)} \Spf(\mathcal{O}_K[\varepsilon]/(\varepsilon^2))\to \mathrm{Spf}(\mathcal{O}_K)^{\rm HT}
\]
yielding a Sen operator for $\mathcal{O}_K$. 
Namely, for $\mathcal{E}\in \mathcal{D}(\mathrm{Spf}(\mathcal{O}_K)^{\rm HT})$ multiplication by the element\footnote{ Here all necessary higher divided powers of $\varepsilon, 1+e\varepsilon$ are defined to be $0$.} $(1+e\varepsilon,\varepsilon)\in G_\pi(R[\varepsilon]/(\varepsilon^2))$ yields an automorphism of 
\[
  \mathcal{E}\otimes_{\mathcal{O}_K} \mathcal{O}_K[\varepsilon]/(\varepsilon^2),
\]
which can be interpreted as a morphism $\mathrm{Id}+\varepsilon \Theta_{\pi,\mathcal{E}}\colon \mathcal{E}\to \mathcal{E}\otimes_{\mathcal{O}_K} \mathcal{O}_K[\varepsilon]/(\varepsilon^2)$ for some endomorphism $\Theta_{\pi,\mathcal{E}}\colon \mathcal{E}\to \mathcal{E}$ in $\mathcal{D}(\mathrm{Spf}(\mathcal{O}_K)^{\rm HT})$. 
The endomorphism $\Theta_{\pi,\mathcal{E}}$ is called the \textit{Sen operator} of $\mathcal{E}$.
By a slight abuse of notation, we denote by $\Theta_{\pi,\mathcal{E}}$ also the pullback of $\Theta_{\pi,\mathcal{E}}$ along the map $\rho_\pi$.
 The construction of $\Theta_{\pi,\mathcal E}$ is clearly functorial in $\mathcal{E}$, thus pullback along $\rho_\pi$  defines a natural functor
 \[
\beta_{\pi}^+:\mathcal{D}(\mathrm{Spf}(\mathcal{O}_K)^{\rm HT})\to \mathcal{D}(\mathcal{O}_K[\Theta_\pi]),\ \mathcal{E}\mapsto (\rho_\pi^\ast \mathcal{E}, \Theta_{\pi,\mathcal{E}}).
\]

\begin{example}
  \label{sec:complexes-Hodge--Tate-example-sen-for-k-vs-sen-for-z-p}
  The natural map $h\colon \mathrm{Spf}(\mathcal{O}_K)^{\rm HT}\to \mathrm{Spf}(\Z_p)^{\rm HT}\cong B\mathbb{G}_m^\sharp$ is induced by the morphism of groups
  \[
    G_\pi\to \mathbb{G}_m^\sharp,\ (t,a)\mapsto t.
  \]
  From the constructions of the Sen operators for $\mathcal{O}_K$ and $\Z_p$, respectively, we can conclude that if $\mathcal{E}\in \mathcal{D}(\mathrm{Spf}(\Z_p)^{\rm HT})$ with Sen operator $\Theta_{\mathcal{E}}\colon \mathcal{E}\to \mathcal{E}$, then $h^\ast \mathcal{E}$ has Sen operator $\Theta_\pi=e\cdot h^\ast \Theta$. 
In particular, for the Breuil--Kisin line bundles $\mathcal{O}_{\mathrm{Spf}(\mathcal{O}_K)^{\rm HT}}\{n\}$ from \cite[Example 3.3.8]{bhatt2022absolute}, this means that $\mathcal{O}_{\mathrm{Spf}(\Z_p)^{\rm HT}}\{n\}$ pulls back to
  \[
    \mathcal{O}_{\mathrm{Spf}(\mathcal{O}_K)^{\rm HT}}\{n\}:=h^\ast \mathcal{O}_{\mathrm{Spf}(\Z_p)^{\rm HT}}\{n\},
  \]
  whose associated Sen operator is given by $e\cdot n$, cf.\ \cite[Example 3.5.6]{bhatt2022absolute}. 
We stress that $\mathcal{O}_{\mathrm{Spf}(\mathcal{O}_K)^{\rm HT}}\{n\}$ is canonically defined, but that its associated Sen operator depends on the choice of $\pi$ because $e$ depends on $\pi$.
\end{example}

As in \cite[Example 3.5.5]{bhatt2022absolute} the Sen operator satisfies the Leibniz rule for tensor products in $\mathcal{D}(\mathrm{Spf}(\mathcal{O}_K)^{\rm HT})$.

The main aim of this section is to prove the following analog of \cite[Theorem 3.5.8]{bhatt2022absolute}.

\begin{theorem}\label{t:beta}
  \label{sec:complexes-Hodge--Tate-1-complexes-on-ht-locus-for-r}
  The functor
  \[
    \beta_{\pi}^+:\mathcal{D}(\mathrm{Spf}(\mathcal{O}_K)^{\rm HT})\to \mathcal{D}(\mathcal{O}_K[\Theta_\pi]),\ \mathcal{E}\mapsto (\rho_\pi^\ast \mathcal{E}, \Theta_{\pi,\mathcal{E}})
  \]
  is fully faithful and its essential image consists of complexes $M\in \mathcal{D}(\mathcal{O}_K[\Theta_\pi])$ which are (derived) $\pi$-adically complete and such that the action of $\Theta^p_\pi-e^{p-1}\Theta_\pi$ on the cohomology $H^\ast(k\otimes_{\mathcal{O}_K}^L M)$ is locally nilpotent.
\end{theorem}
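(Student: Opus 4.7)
The strategy is to adapt Bhatt--Lurie's proof of \cite[Theorem~3.5.8]{bhatt2022absolute} (which handles $K=\Q_p$ and the group $\mathbb{G}_m^\sharp$) to the group $G_\pi$. Using the presentation $\Spf(\mathcal{O}_K)^{\HT}=BG_\pi$ recalled in \Cref{sec:recollection-ht-cw-stack} and faithfully flat descent along $\rho_\pi$, the first step is to identify $\mathcal{D}(\Spf(\mathcal{O}_K)^{\HT})$ with the $\infty$-category of (derived) $\pi$-adically complete complexes of $\mathcal{O}_K$-modules equipped with a coaction of the Hopf algebra $\mathcal{O}(G_\pi)=\mathcal{O}_K\langle a\rangle$ (the $p$-adically completed PD polynomial ring), whose coproduct is $\Delta(a)=a\otimes 1+1\otimes a+e\cdot a\otimes a$. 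Under this equivalence, a coaction $\psi(m)=\sum_{n\geq 0}\theta_n(m)\cdot\tfrac{a^n}{n!}$ is sent by $\beta_\pi^+$ to $(M,\Theta_\pi:=\theta_1)$, recovering the definition of the Sen operator from \Cref{sec:complexes-Hodge--Tate-for-o-k}.

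Second, I would unwind coassociativity against the non-additive coproduct $\Delta$. A short induction on $n$ shows that the $\theta_n$ are forced to be the ``falling factorial of step $e$'',
\[
\theta_n=\Theta_\pi(\Theta_\pi-e)(\Theta_\pi-2e)\cdots(\Theta_\pi-(n-1)e),
\]
which agrees with the rescaled Bhatt--Lurie formula $\theta_n=e^n\tilde{\Theta}(\tilde{\Theta}-1)\cdots(\tilde{\Theta}-(n-1))$ under the isomorphism $G_\pi\cong\mathbb{G}_m^\sharp$, $\tilde{a}=ea$, whenever $e\in\mathcal{O}_K^\times$. Since each $\theta_n$ is thus a universal polynomial in $\Theta_\pi$, a coaction is uniquely determined by $\Theta_\pi$ when it exists, and morphisms of comodules are exactly $\mathcal{O}_K[\Theta_\pi]$-linear maps. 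This yields fully faithfulness of $\beta_\pi^+$ and reduces the description of its essential image to the question of when the series defining $\psi(m)$ converges $\pi$-adically in $M\,\hat{\otimes}\,\mathcal{O}(G_\pi)$ for every $m\in M$.

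Third, and this is the main obstacle, I would carry out the $p$-adic analysis of the operators $\theta_n/n!$. The crucial combinatorial input is the Fermat-type congruence
\[
\prod_{i=0}^{p-1}(X-ie)\equiv X^p-e^{p-1}X\pmod{p},
\]
obtained by applying Wilson's theorem to $X/e$. This identifies $\theta_p\bmod p$ with the ``Frobenius-like'' operator $F:=\Theta_\pi^p-e^{p-1}\Theta_\pi$, and iterating shows that modulo $p$ the operator $\theta_{np}/n!$ is a polynomial in $F$. A careful induction, modeled on the proof of \cite[Theorem~3.5.8]{bhatt2022absolute}, then converts $\pi$-adic convergence of the series $\sum_n\theta_n(m)\tfrac{a^n}{n!}$ into topological (hence locally) nilpotency of $F$ on $H^\ast(k\otimes^L_{\mathcal{O}_K}M)$, and conversely. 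Essential surjectivity onto the resulting subcategory is then immediate: given $(M,\Theta_\pi)$ satisfying the nilpotency condition, one defines $\theta_n$ by the above formula, verifies coassociativity by construction, and the convergence estimates just established show that the formula defines a genuine coaction.

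The main technical difficulty lies in the third step, and is aggravated by the possibility that $e\in\mathfrak{m}_K$ (so $G_\pi\cong\mathbb{G}_a^\sharp$ rather than $\mathbb{G}_m^\sharp$): here the polynomial $\theta_n$ has mixed $p$- and $\pi$-adic behavior, so the inductive estimates must be performed with respect to the $\pi$-adic (equivalently, $p$-adic) topology on $\mathcal{O}_K$ rather than by a direct reduction to the Bhatt--Lurie case over $\Z_p$.
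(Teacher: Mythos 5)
Your proposal is essentially correct but follows a genuinely different route than the paper. The paper's proof (following Bhatt--Lurie) first establishes that the Breuil--Kisin twists $\mathcal{O}_{\Spf(\mathcal{O}_K)^{\HT}}\{n\}$, $n\geq 0$, generate $\mathcal{D}(\Spf(\mathcal{O}_K)^{\HT})$ under colimits (\Cref{sec:complexes-Hodge--Tate-complexes-on-ht-divisor-generated-by-bk-twists}), using the cohomology computation of \Cref{sec:complexes-Hodge--Tate-cohomology-on-Cartier--Witt-stack} and the characterization of the twists in \Cref{sec:complexes-Hodge--Tate-characterization-of-bk-twists-over-r}; given this, full faithfulness and essential surjectivity are imported wholesale from the argument of \cite[Theorem~3.5.8]{bhatt2022absolute}, the only new ingredient being the factorization $\Theta_\pi^p - e^{p-1}\Theta_\pi = \prod_{i=0}^{p-1}(\Theta_\pi-ei)$ modulo $p$. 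You instead propose a hands-on comodule analysis: derive the falling-factorial formula $\theta_n = \prod_{i=0}^{n-1}(\Theta_\pi-ie)$ from coassociativity (this is exactly the formula recorded in \Cref{sec:complexes-Hodge--Tate-explicit-action}, though there it is obtained after the theorem is proved), then convert convergence of the coaction series into the nilpotency condition via the same congruence. This is, in substance, the Cartier-duality route that the paper explicitly mentions in \Cref{sec:complexes-Hodge--Tate-cartier-dual-bis} as an alternative but does not carry out. The trade-off: the generation-by-twists route delegates the delicate full faithfulness/essential surjectivity steps to Bhatt--Lurie, whereas your route must do the $p$-adic estimates by hand and also needs some bookkeeping to promote the abelian coassociativity computation to a statement about the $\infty$-category $\mathcal{D}(BG_\pi)$ (which is a totalization, not the naive derived category of discrete comodules) — you rightly flag the estimates as the main gap, but the latter point deserves a remark too. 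Two small inaccuracies: the congruence $\prod_{i=0}^{p-1}(X-ie)\equiv X^p - e^{p-1}X \pmod p$ is the polynomial Fermat congruence rather than Wilson's theorem, and the substitution $X\mapsto X/e$ only makes sense for $e\in\mathcal{O}_K^\times$ — the general case follows because both sides are polynomials in $e$ over $\F_p$ agreeing on the dense open $e\neq 0$; and the convergence question concerns $\theta_n(m)\to 0$ rather than ``$\theta_n/n!$'', since the basis of $\mathcal{O}(G_\pi)$ is already the divided-power basis $\{a^n/n!\}$.
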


We note that the functor depends on the choice of the uniformizer $\pi\in \mathcal{O}_K$.
If $e\in \mathcal{O}_K$ is not a unit, then $\Theta_\pi^p-e^{p-1}\Theta_\pi=\Theta^p_\pi$ on $H^\ast(k\otimes_{\mathcal{O}_K}^L M)$, which is in accordance with the description of representations of $\mathbb{G}_a^\sharp$ as finite free $\mathcal{O}_K$-modules with a topologically nilpotent endomorphism.

For the proof of \Cref{t:beta} we will follow the arguments of \cite[Theorem 3.5.8]{bhatt2022absolute}.

\begin{lemma}
  \label{sec:complexes-Hodge--Tate-sen-operator-for-regular-representation}
  Set $\mathcal{E}:=\rho_{\pi,\ast}\mathcal{O}_{\Spf(\mathcal{O}_K)}$. 
Then
  \begin{enumerate}
  \item $\rho_\pi^\ast \mathcal{E}\cong \mathcal{O}_{G_\pi}=\widehat{\bigoplus\limits_{n\geq 0}} \mathcal{O}_K\cdot \frac{a^n}{n!}$ (where on the right we write for simplicity $\mathcal{O}_K$ for what should be denoted $\mathcal{O}_{\mathrm{Spf}(\mathcal{O}_K)}$),
  \item $\Theta_{\pi,\mathcal{E}}=(1+ea)\frac{\partial}{\partial a}$,
  \item the sequence $0\to \mathcal{O}_K\to \rho_\pi^\ast \mathcal{E}\xrightarrow{\Theta_{\pi,\mathcal{E}}} \rho_\pi^\ast\mathcal{E}\to 0$ is exact.
  \end{enumerate}
\end{lemma}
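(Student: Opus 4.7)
The plan is to identify $\rho_\pi^{\ast}\mathcal{E}$ with the coordinate ring $\mathcal{O}_{G_\pi}$ via torsor descent, to unravel the construction of $\Theta_{\pi,\mathcal{E}}$ as the infinitesimal of the right-regular $G_\pi$-action on itself, and then to prove the exactness in (3) by observing that $1+ea$ is a unit in $\mathcal{O}_{G_\pi}$, so that $\Theta_\pi = (1+ea)\partial_a$ has the same kernel and cokernel as the much simpler operator $\partial_a$.

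For (1), the Bhatt--Lurie proposition quoted at the end of \Cref{sec:recollection-ht-cw-stack} says that $\rho_\pi$ is an fpqc $G_\pi$-torsor, so the base change $\Spf(\mathcal{O}_K)\times_{\Spf(\mathcal{O}_K)^{\mathrm{HT}}}\Spf(\mathcal{O}_K)$ is canonically trivialised as $G_\pi\times\Spf(\mathcal{O}_K)$. This identifies $\rho_\pi^{\ast}\mathcal{E}=\rho_\pi^{\ast}\rho_{\pi,\ast}\mathcal{O}_{\Spf(\mathcal{O}_K)}$ with $\mathcal{O}_{G_\pi}$, and the explicit description of $G_\pi$ via $a$ recalled at the end of \Cref{sec:recollection-ht-cw-stack} as $\Spf(\mathcal{O}_K[\tfrac{a^n}{n!}\mid n\geq 0]^{\wedge}_p)$ yields the claimed completed-direct-sum presentation. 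For (2), the Sen operator $\Theta_{\pi,\mathcal{E}}$ is by its construction the infinitesimal right-regular action corresponding to the element $(1+e\varepsilon,\varepsilon)\in G_\pi(\mathcal{O}_K[\varepsilon]/(\varepsilon^2))$, whose $a$-coordinate is $\varepsilon$. Using the group law $a_1 \ast a_2 = a_1+a_2+ea_1a_2$ derived earlier, right-translation by a point with coordinate $\alpha$ sends the tautological function $a$ to $a+(1+ea)\alpha$, so at $\alpha=\varepsilon$ the resulting operator reads $\Theta_\pi=(1+ea)\partial_a$ on $\mathcal{O}_{G_\pi}$.

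For (3), the key input is that $1+ea$ is a unit in $\mathcal{O}_{G_\pi}$. This can be shown by expanding the formal geometric series $\sum_{n\geq 0}(-ea)^n$ in the basis $\tfrac{a^n}{n!}$: the coefficients are $(-e)^n n!$, with $p$-adic absolute value $\leq |n!|_p\to 0$, so the series converges in $\mathcal{O}_{G_\pi}$ and provides an inverse. Alternatively, $1+ea$ is the pullback of the tautological unit $t$ on $\mathbb{G}_m^{\sharp}$ along the group homomorphism $G_\pi\to\mathbb{G}_m^{\sharp},\ (t,a)\mapsto t$. Given this, the kernel and image of $\Theta_\pi=(1+ea)\partial_a$ coincide with those of $\partial_a$; one checks immediately that $\partial_a$ shifts the basis $\{\tfrac{a^n}{n!}\}$ down by one, has kernel $\mathcal{O}_K\cdot 1$, and is surjective on the $p$-adic completion since any $\sum d_n\tfrac{a^n}{n!}$ with $d_n\to 0$ is hit by $\sum d_n\tfrac{a^{n+1}}{(n+1)!}$.

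The only subtle point is the unit-ness of $1+ea$ in the completed divided-power ring: it genuinely fails in $\mathcal{O}_K[a]$ whenever $e\in\mathfrak{m}_K$, and one really needs the divided-power structure together with the estimate $|n!|_p\to 0$ to produce a convergent inverse. Everything else is either torsor formalism or a direct computation in the basis $\{\tfrac{a^n}{n!}\}$.
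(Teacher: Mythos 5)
Your proof is correct. Parts (1) and (2) coincide with the paper's approach (the paper invokes the projection formula rather than explicitly saying "torsor descent", but this comes to the same thing, and the computation for (2) via the comultiplication is identical). For part (3), however, you take a genuinely different and somewhat cleaner route: the paper directly solves the recursion $b_n = c_{n+1}+enc_n$ by induction, ending with the assertion (stated without detailed estimate) that $b_n\to 0$ implies $c_n\to 0$; your argument instead observes that $1+ea$ is a unit in $\mathcal{O}_{G_\pi}$, since $(1+ea)^{-1}=\sum_{n\geq 0}(-e)^n n!\cdot\frac{a^n}{n!}$ converges because $|(-e)^n n!|\leq |n!|\to 0$, and then reduces the kernel and surjectivity computation for $\Theta_\pi=(1+ea)\partial_a$ to the trivial case of $\partial_a$. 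This factorisation avoids the convergence bookkeeping in the recursion entirely. One small imprecision in your final remark: $1+ea$ fails to be a unit in the (uncompleted) polynomial ring $\mathcal{O}_K[a]$ for all $e\neq 0$, not just when $e\in\mathfrak{m}_K$; the contrast you presumably intend is with the $p$-adic completion $\mathcal{O}_K[a]^{\wedge}_p$ without divided powers, where invertibility of $1+ea$ does require $e\in\mathfrak{m}_K$. This doesn't affect the argument.
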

\begin{proof}
The isomorphism $\rho_\pi^\ast \mathcal{E}\cong \mathcal{O}_{G_\pi}$ follows from the projection formula. 
The description of $\mathcal{O}_{G_\pi}$ in terms of $\widehat{\bigoplus\limits_{n\geq 0}} \mathcal{O}_K\cdot \frac{a^n}{n!}$ follows if we identify $G_\pi\cong \mathbb{G}_a^\sharp$ (as formal schemes) via the map $(t,a)\to a$. 
Under the isomorphism $G_\pi\cong \mathbb{G}_a^\sharp,$ which transfers the comultiplication to the map 
\[
c\colon \widehat{\mathcal{O}_{\mathbb{G}_a^\sharp}}\to \widehat{\mathcal{O}_{\mathbb{G}_a^\sharp}}\widehat{\otimes}_{\mathcal{O}_K} \widehat{\mathcal{O}_{\mathbb{G}_a^\sharp}},\ a\mapsto a+b+e\cdot ab,
\] 
the Sen operator $\Theta_{\pi,\mathcal{E}}$ is constructed using the action of the element $\varepsilon\in \mathbb{G}_a^\sharp(\mathcal{O}_K[\varepsilon]/(\varepsilon^2))$. 
In other words, we have to look at the composition
\[
  \widehat{\mathcal{O}_{\mathbb{G}_a^\sharp}}\xrightarrow{c} \widehat{\mathcal{O}_{\mathbb{G}_a^\sharp}}\otimes_{\mathcal{O}_K} \widehat{\mathcal{O}_{\mathbb{G}_a^\sharp}}\xrightarrow{\mathrm{Id}\otimes(a\mapsto \varepsilon)} \widehat{\mathcal{O}_{\mathbb{G}_a^\sharp}}\otimes_{\mathcal{O}_K} \mathcal{O}_K[\varepsilon]/(\varepsilon^2),
\]
which sends some $f(a)\in \widehat{\mathcal{O}_{\mathbb{G}_a^\sharp}}$ to the element $f(a+(1+ea)\varepsilon)=f(a)+\varepsilon (1+ea)\frac{\partial f}{\partial a}(a)$ as desired.
Let $f(a)=\sum\limits_{n\geq 0} c_n\frac{a^n}{n!}\in \widehat{\mathcal{O}_{\mathbb{G}_a^\sharp}}$.
Then
\[
  \Theta_{\pi,\mathcal{E}}(f)=\sum\limits_{n\geq 1} c_n(en\frac{a^n}{n!}+\frac{a^{n-1}}{(n-1)!})=\sum\limits_{n\geq 0}(c_{n+1}+e n c_n)\frac{a^n}{n!}.
\]
Thus, if $\Theta_{\pi,\mathcal{E}}(f)=0$, then $c_{n+1}=-en c_n$ for all $n\geq 0$, which implies $c_{n+1}=0$ for all $n\geq 0$, i.e., $f\in \mathcal{O}_K$. 
Given an element $g(a)=\sum\limits_{n\geq 0} b_n\frac{a^n}{n!}$, then we can inductively solve the system $b_n=c_{n+1}+en c_n$ by starting with
\[\begin{matrix}
  c_0:=0 \\
  c_1:=b_0 \\
  c_2:=b_1-e\cdot 1\cdot c_1=b_1-e b_0\\
  \ldots \\
  c_{n+1}:=b_n-en c_n=\ldots=b_n-\ldots -(-e)^{n-1}n!b_0.
\end{matrix}\]
We see that if for the $p$-adic topology $b_n\to 0, n\to \infty$, then also $c_n\to 0, n\to \infty$. 
This finishes the proof of the lemma.
\end{proof}

Similarly to \cite[Proposition 3.5.11]{bhatt2022absolute} we can draw the following consequence.

\begin{proposition}
  \label{sec:complexes-Hodge--Tate-cohomology-on-Cartier--Witt-stack}
  For any $\mathcal{E}\in \mathcal{D}(\mathrm{Spf}(\mathcal{O}_K)^{\rm HT})$ there exists a canonical fiber sequence
  \[
    R\Gamma(\mathrm{Spf}(\mathcal{O}_K)^{\rm HT},\mathcal{E})\to \beta_\pi^+(\mathcal{E})\xrightarrow{\Theta_{\pi,\mathcal{E}}} \beta_\pi^+(\mathcal{E}).
  \]
  In particular, the functor $R\Gamma(\mathrm{Spf}(\mathcal{O}_K)^{\rm HT},-)$ commutes with all colimits/limits.
\end{proposition}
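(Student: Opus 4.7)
The plan is to upgrade the short exact sequence of \Cref{sec:complexes-Hodge--Tate-sen-operator-for-regular-representation} from $\Spf(\mathcal{O}_K)$ to a fiber sequence directly on $\mathrm{Spf}(\mathcal{O}_K)^{\rm HT}$, tensor it with $\mathcal{E}$, and then take global sections using the projection formula, in close analogy with the proof of \cite[Proposition~3.5.11]{bhatt2022absolute}. Concretely, I would first set $\mathcal{E}_0 := \rho_{\pi,\ast}\mathcal{O}_{\Spf(\mathcal{O}_K)}$ and prove that
\[
\mathcal{O}_{\mathrm{Spf}(\mathcal{O}_K)^{\rm HT}} \xrightarrow{u} \mathcal{E}_0 \xrightarrow{\Theta_{\pi,\mathcal{E}_0}} \mathcal{E}_0
\]
is a fiber sequence in $\mathcal{D}(\mathrm{Spf}(\mathcal{O}_K)^{\rm HT})$, where $u$ is the unit of the adjunction $\rho_\pi^\ast \dashv \rho_{\pi,\ast}$ and $\Theta_{\pi,\mathcal{E}_0}$ is the Sen operator of $\mathcal{E}_0$. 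Since $\rho_\pi$ is faithfully flat, this is checked after pullback along $\rho_\pi$, where the diagram becomes exactly the exact sequence $0 \to \mathcal{O}_K \to \mathcal{O}_{G_\pi} \xrightarrow{(1+ea)\partial_a} \mathcal{O}_{G_\pi} \to 0$ supplied by \Cref{sec:complexes-Hodge--Tate-sen-operator-for-regular-representation} (noting that $\rho_\pi^\ast(u)$ is the inclusion of the constants and $\rho_\pi^\ast\Theta_{\pi,\mathcal{E}_0} = (1+ea)\partial_a$ by that lemma).

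Next, since tensoring with $\mathcal{E}$ preserves fiber sequences, I obtain a fiber sequence
\[
\mathcal{E} \to \mathcal{E}\otimes \mathcal{E}_0 \xrightarrow{\mathrm{id}\otimes \Theta_{\pi,\mathcal{E}_0}} \mathcal{E}\otimes \mathcal{E}_0.
\]
Applying $R\Gamma(\mathrm{Spf}(\mathcal{O}_K)^{\rm HT},-)$ and invoking the projection formula (valid because $\rho_\pi$ is affine, whence $\mathcal{E}\otimes \rho_{\pi,\ast}\mathcal{O} \simeq \rho_{\pi,\ast}\rho_\pi^\ast \mathcal{E}$) together with affineness of $\Spf(\mathcal{O}_K)$ gives
\[
R\Gamma(\mathrm{Spf}(\mathcal{O}_K)^{\rm HT},\mathcal{E}\otimes \mathcal{E}_0) \simeq R\Gamma(\Spf(\mathcal{O}_K),\rho_\pi^\ast\mathcal{E}) = \beta_\pi^+(\mathcal{E}).
\]
It then remains to identify the induced endomorphism of $\beta_\pi^+(\mathcal{E})$ with $\Theta_{\pi,\mathcal{E}}$, and I expect this to be the main obstacle of the plan. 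It should follow by tracing the construction through the projection formula: since $\Theta_{\pi,\mathcal{E}}$ is defined via the infinitesimal action of $(1+e\varepsilon,\varepsilon) \in G_\pi(\mathcal{O}_K[\varepsilon]/(\varepsilon^2))$, and since $\mathrm{id}\otimes \Theta_{\pi,\mathcal{E}_0}$ on $\mathcal{E}\otimes \mathcal{E}_0 \simeq \rho_{\pi,\ast}\rho_\pi^\ast\mathcal{E}$ records the descent datum for $\mathcal{E}$ against exactly this infinitesimal element, the two operators agree after taking $R\Gamma$. Everything else reduces formally to \Cref{sec:complexes-Hodge--Tate-sen-operator-for-regular-representation}.

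Finally, for the ``in particular'' assertion, $\beta_\pi^+ = \rho_\pi^\ast$ can be identified with the forgetful functor from objects of $\mathcal{D}(\mathcal{O}_K)$ with $G_\pi$-action back to $\mathcal{D}(\mathcal{O}_K)$; this functor admits both a left and a right adjoint (induction and coinduction along $G_\pi \to \Spec(\mathcal{O}_K)$) and therefore commutes with all small limits and colimits. Since forming the fiber of a natural transformation in a stable $\infty$-category also commutes with all small limits and colimits, the fiber sequence established above transfers this property to $R\Gamma(\mathrm{Spf}(\mathcal{O}_K)^{\rm HT},-)$.
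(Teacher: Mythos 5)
Your proposal is correct and follows the paper's proof essentially verbatim: identify $\mathcal{O}$ with the fiber of $\Theta$ on $\rho_{\pi,\ast}\mathcal{O}_{\Spf(\mathcal{O}_K)}$ (verified after faithfully flat pullback via \Cref{sec:complexes-Hodge--Tate-sen-operator-for-regular-representation}), tensor with $\mathcal{E}$, apply $R\Gamma$ with the projection formula, and check the resulting endomorphism is $\Theta_{\pi,\mathcal{E}}$. The only point worth tightening is that ``being a fiber sequence'' cannot literally be checked after pullback (the nullhomotopy of $\Theta_{\pi,\mathcal{E}_0}\circ u$ is data, not a property); one should first note, as the paper does, that the Sen operator of $\mathcal{O}_{\mathrm{Spf}(\mathcal{O}_K)^{\rm HT}}$ vanishes, so that naturality of $\Theta$ canonically factors $u$ through $\mathrm{fib}(\Theta_{\pi,\mathcal{E}_0})$, and only then use faithful flatness to check this factorization is an equivalence.
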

\begin{proof}
  Note that the Sen operator for $\mathcal{O}_{\mathrm{Spf}(\mathcal{O}_K)^{\rm HT}}$ is $0$, e.g., by \Cref{sec:complexes-Hodge--Tate-example-sen-for-k-vs-sen-for-z-p}. 
This implies (by naturality of the Sen operator) that the natural map $\mathcal{O}_{\mathrm{Spf}(\mathcal{O}_K)^{\rm HT}}\to \rho_{\pi,\ast}\rho_\pi^\ast\mathcal{O}_{\mathrm{Spf}(\mathcal{O}_K)^{\rm HT}}=\rho_{\pi,\ast}\mathcal{O}_{\Spf(\mathcal{O}_K)}$ induces a natural map
  \[
    \mathcal{O}_{\mathrm{Spf}(\mathcal{O}_K)^{\rm HT}}\to \mathrm{fib}(\rho_{\pi,\ast}(\mathcal{O}_{\Spf(\mathcal{O}_K)})\xrightarrow{\Theta_{\pi,\rho_{\pi,\ast}\mathcal{O}_{\Spf(\mathcal{O}_K)}}} \rho_{\pi,\ast}(\mathcal{O}_{\Spf(\mathcal{O}_K)})),
  \]
  which by faithfully flat descent along $\rho_\pi$ and \Cref{sec:complexes-Hodge--Tate-sen-operator-for-regular-representation} is an isomorphism.
  Tensoring the resulting fiber sequence with $\mathcal{E}$ yields a fiber sequence
  \[
    \mathcal{E}\to \rho_{\pi,\ast}(\rho_{\pi}^\ast(\mathcal{E}))\xrightarrow{\alpha} \rho_{\pi,\ast}(\rho_{\pi}^\ast(\mathcal{E})).
  \]
  From here we apply the functor $R\Gamma(\mathrm{Spf}(\mathcal{O}_K)^{\rm HT},-)$ to get a fiber sequence
  \[
    R\Gamma(\mathrm{Spf}(\mathcal{O}_K)^{\rm HT},\mathcal{E})\to \beta_\pi^+(\mathcal{E})\xrightarrow{\widetilde{\alpha}} \beta^+_\pi(\mathcal{E})
  \]
  and one checks that $\widetilde{\alpha}=\Theta_{\pi,\mathcal{E}}$.
\end{proof}

\begin{remark}
  \label{sec:complexes-Hodge--Tate-characterization-of-bk-twists-over-r} Given \Cref{sec:complexes-Hodge--Tate-1-complexes-on-ht-locus-for-r} the same argument as in \cite[Corollary 3.5.14]{bhatt2022absolute} characterizes objects isomorphic to $\mathcal{O}_{\mathrm{Spf}(\mathcal{O}_K)^{\rm HT}}\{n\}\in \mathcal{D}(\mathrm{Spf}(\mathcal{O}_K)^{\rm HT})$ as those $\mathcal{E}\in \mathcal{D}(\mathrm{Spf}(\mathcal{O}_K)^{\rm HT})$ such that $\beta^+_\pi(\mathcal{E})\cong \mathcal{O}_K$ and $\Theta_{\pi,\mathcal{E}}=e\cdot n$.
\end{remark}

Now we prove the analog of \cite[Proposition 3.5.15]{bhatt2022absolute}.

\begin{proposition}
  \label{sec:complexes-Hodge--Tate-complexes-on-ht-divisor-generated-by-bk-twists}
  The category $\mathcal{D}(\mathrm{Spf}(\mathcal{O}_K)^{\rm HT})$ is generated under colimits by the Breuil--Kisin twists $\mathcal{O}_{\mathrm{Spf}(\mathcal{O}_K)^{\rm HT}}\{n\}$, $n\geq 0$.
\end{proposition}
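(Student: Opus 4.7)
The plan is to verify the equivalent orthogonality statement: any $\mathcal{E}\in \mathcal{D}(\mathrm{Spf}(\mathcal{O}_K)^{\rm HT})$ with $R\Hom(\mathcal{O}_{\mathrm{Spf}(\mathcal{O}_K)^{\rm HT}}\{n\},\mathcal{E})=0$ for all $n\geq 0$ must vanish. The argument closely follows the structure of \cite[Proposition 3.5.15]{bhatt2022absolute}.

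First, I would re-express the vanishing hypothesis in terms of $\beta^+_\pi(\mathcal{E})$. Using invertibility of the Breuil--Kisin twists together with the fiber sequence of \Cref{sec:complexes-Hodge--Tate-cohomology-on-Cartier--Witt-stack}, the Leibniz rule for the Sen operator, and the identity $\Theta_{\pi,\mathcal{O}\{-n\}}=-en$ from \Cref{sec:complexes-Hodge--Tate-example-sen-for-k-vs-sen-for-z-p}, one computes
\[
R\Hom(\mathcal{O}\{n\},\mathcal{E})\;\simeq\;R\Gamma(\mathrm{Spf}(\mathcal{O}_K)^{\rm HT},\mathcal{E}\{-n\})\;\simeq\;\mathrm{fib}\bigl(\beta^+_\pi(\mathcal{E})\xrightarrow{\Theta_{\pi,\mathcal{E}}-en}\beta^+_\pi(\mathcal{E})\bigr).
\]
Hence the hypothesis says that $\Theta_{\pi,\mathcal{E}}-en$ is a quasi-isomorphism of $\beta^+_\pi(\mathcal{E})$ for every $n\geq 0$.

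The next step is to deduce from this that $\beta^+_\pi(\mathcal{E})=0$, which then forces $\mathcal{E}=0$ by faithfully flat descent along $\rho_\pi$. By \Cref{sec:complexes-Hodge--Tate-1-complexes-on-ht-locus-for-r}, $\Theta_\pi^p-e^{p-1}\Theta_\pi$ acts locally nilpotently on $H^*(k\otimes_{\mathcal{O}_K}^L\beta^+_\pi(\mathcal{E}))$, so every generalised eigenvalue $\lambda$ of $\Theta_{\pi,\mathcal{E}}$ on this residue cohomology (over $\overline{k}$) must satisfy $\lambda^p=\overline{e}^{p-1}\lambda$. Using the factorisation
\[
X^p-\overline{e}^{p-1}X=\prod_{n=0}^{p-1}(X-\overline{en})\quad\text{in }\overline{k}[X]
\]
(which degenerates to $X^p$ when $\overline{e}=0$), one sees that any such $\lambda$ lies in $\{\overline{en}:0\leq n\leq p-1\}$. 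But reducing the quasi-isomorphisms $\Theta-en$ modulo $\mathfrak{m}_K$ for $n=0,\ldots,p-1$ rules out exactly these eigenvalues, forcing $H^*(k\otimes_{\mathcal{O}_K}^L\beta^+_\pi(\mathcal{E}))=0$. Derived Nakayama, applied to the derived $\pi$-complete complex $\beta^+_\pi(\mathcal{E})$, then gives $\beta^+_\pi(\mathcal{E})=0$.

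The main subtlety will be the eigenvalue analysis on the residue field: the factorisation above is precisely what tells us which integers $n$ we need, and it is what makes the nonnegative twists $\mathcal{O}\{n\}$, $n\geq 0$, suffice as generators. The remaining steps (the translation in the first paragraph, derived Nakayama, and faithfully flat descent along $\rho_\pi$) are then straightforward.
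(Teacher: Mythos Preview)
Your argument is internally consistent, but it has a circularity relative to the paper's logical order. You invoke \Cref{sec:complexes-Hodge--Tate-1-complexes-on-ht-locus-for-r} to conclude that $\Theta_\pi^p-e^{p-1}\Theta_\pi$ acts locally nilpotently on $H^\ast(k\otimes_{\mathcal{O}_K}^L\beta^+_\pi(\mathcal{E}))$ for an \emph{arbitrary} $\mathcal{E}$; but in the paper, precisely this ``well-definedness'' part of \Cref{sec:complexes-Hodge--Tate-1-complexes-on-ht-locus-for-r} is deduced from the present proposition (one first checks the nilpotence on the generators $\mathcal{O}\{n\}$ via $(en)^p-e^{p-1}(en)\equiv 0\bmod p$, then uses generation to propagate it). So as written, your proof is circular. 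It can be salvaged if you first establish the nilpotence directly---for instance via the Cartier-dual description in \Cref{sec:complexes-Hodge--Tate-cartier-dual}, which over $k$ gives modules over $\varprojlim_n k[u]/(u^p-\bar e^{p-1}u)^n$---but that input is not in place at this point of the paper.

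For comparison, the paper's proof is constructive and avoids this issue entirely: following \cite[Proposition~3.5.15]{bhatt2022absolute}, one reduces to showing that the single object $\rho_{\pi,\ast}\mathcal{O}_{\Spf(\mathcal{O}_K)}$ lies in the localizing subcategory generated by the $\mathcal{O}\{n\}$, and then exhibits an exhaustive filtration $\mathcal{F}_{\leq n}=\bigoplus_{i\leq n}\mathcal{O}_K\cdot\frac{a^i}{i!}$ whose graded pieces are identified with $\mathcal{O}\{n\}$ using the congruence $(1+ea)\partial_a\frac{a^n}{n!}\equiv en\cdot\frac{a^n}{n!}\bmod\mathcal{F}_{\leq n-1}$ and \Cref{sec:complexes-Hodge--Tate-characterization-of-bk-twists-over-r}. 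Your eigenvalue analysis is essentially the same computation seen from the dual side, but the filtration argument uses only \Cref{sec:complexes-Hodge--Tate-sen-operator-for-regular-representation} and \Cref{sec:complexes-Hodge--Tate-cohomology-on-Cartier--Witt-stack}, which are already available.
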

\begin{proof}
  Following the argument of \cite[Proposition 3.5.15]{bhatt2022absolute} the claim reduces to showing that the regular representation $\mathcal{O}_{G_\pi}$ of $G_\pi$ lies in the category spanned  by the (objects corresponding to the) $\mathcal{O}_{\mathrm{Spf}(\mathcal{O}_K)^{\rm HT}}\{n\}$, $n\geq 0$. 
For this we can use the filtration of $\mathcal{O}_{G_\pi}=\widehat{\bigoplus\limits_{i\geq 0}} \mathcal{O}_K\cdot \frac{a^i}{i!}$ given by $\mathcal{F}_{\leq n}:=\bigoplus\limits_{i=0}^n \mathcal{O}_K\cdot \frac{a^i}{i!}$, the formula
  \[
    (1+ea)\frac{\partial }{\partial a}\frac{a^{n}}{n!}\equiv en \frac{a^n}{n!} \textrm{ mod }\mathcal{F}_{\leq {n-1}}
  \] and \Cref{sec:complexes-Hodge--Tate-characterization-of-bk-twists-over-r} to conclude the proposition as in \cite[Proposition 3.5.15]{bhatt2022absolute}.
\end{proof}

\begin{remark}
\label{ramified-case-only-structure-sheaf-needed}
In fact, when $K$ is ramified, one only needs the structure sheaf $\mathcal{O}_{\mathrm{Spf}(\mathcal{O}_K)^{\rm HT}}$ to generate the category under colimits: see \cite[Remark 9.7]{bhatt2022prismatization}. 
\end{remark}

We can now prove \Cref{sec:complexes-Hodge--Tate-1-complexes-on-ht-locus-for-r}.

\begin{proof}[{Proof of \Cref{sec:complexes-Hodge--Tate-1-complexes-on-ht-locus-for-r}}]
  Given \Cref{sec:complexes-Hodge--Tate-complexes-on-ht-divisor-generated-by-bk-twists}, well-definedness and fully faithfulness follow as in \cite[Theorem 3.5.8]{bhatt2022absolute}. 
The condition that $\Theta_e^p-e^{p-1}\Theta_e$ is locally nilpotent on the cohomology modulo $\pi$ appears as
  \[
    (en)^p-e^{p-1}(en)=e^p(n^p-n)
  \]
  is zero modulo $p$ for any $n\in \Z$.
  To show essential surjectivity let $M\in \mathcal{D}(\mathcal{O}_K[\Theta_\pi])$ be an object satisfying the two conditions in \Cref{sec:complexes-Hodge--Tate-1-complexes-on-ht-locus-for-r}. 
Let $\mathcal{O}_K\{n\}\in \mathcal{D}(\mathcal{O}_K[\Theta_\pi])$ be the image of $\mathcal{O}_{\mathrm{Spf}(\mathcal{O}_K)^{\rm HT}}\{n\}$. 
If $M$ is non-zero, we have to cook up a non-zero morphism $\mathcal{O}_K\{n\}[m]\to M$ for some $n,m\in \Z, n\geq 0$. 
As
  \[
    R\Gamma(\mathrm{Spf}(\mathcal{O}_K)^{\rm HT}, (\mathcal{O}_K\{-n\}[-m]\otimes_{\mathcal{O}_K} M)\otimes^L_{\mathcal{O}_K} k)\cong R\Gamma(\mathrm{Spf}(\mathcal{O}_K)^{\rm HT}, \mathcal{O}_K\{-n\}[-m]\otimes_{\mathcal{O}_K} M)\otimes^L_{\mathcal{O}_K} k 
  \]
  and $R\Gamma(\mathrm{Spf}(\mathcal{O}_K)^{\rm HT},\mathcal{O}_K\{-n\}[-m]\otimes M)$ is $p$-complete (e.g., by \Cref{sec:complexes-Hodge--Tate-cohomology-on-Cartier--Witt-stack}) it suffices to replace $M$ by $M\otimes^L_{\mathcal{O}_K} k$ (this uses $p$-completeness of $M$ to ensure that $M\otimes^L_{\mathcal{O}_K} k\neq 0$ if $M\neq 0$).
  Then $\Theta_\pi^p-e^{p-1}\Theta_\pi=\prod\limits_{i=0}^{p-1}(\Theta_\pi-e\cdot i)$ acts locally nilpotently on $H^\ast (M)$. 
Now we can finish the argument as in the proof of \cite[Theorem 3.5.8.]{bhatt2022absolute}. 
\end{proof}

\begin{remark}
	\label{sec:complexes-Hodge--Tate-cartier-dual}
	The Cartier duals of $\mathbb{G}_m^\sharp$ and $\mathbb{G}_a^\sharp$ are known, cf.\ \cite[Remark 3.5.17]{bhatt2022absolute}, \cite[Appendix B]{drinfeld20211}. 
	For $G_\pi$ one can uniformly describe the Cartier dual as the formal group scheme
	\[
	\Spf(\varprojlim\limits_{n} \mathcal{O}_K[u]/\prod\limits_{i=0}^n (u-e\cdot i)).
	\]
\end{remark}
\begin{remark}
\label{sec:complexes-Hodge--Tate-cartier-dual-bis}
\Cref{sec:complexes-Hodge--Tate-1-complexes-on-ht-locus-for-r} could also be deduced from \Cref{sec:complexes-Hodge--Tate-cartier-dual} and Cartier duality (switching to modules over the Cartier dual of $G_\pi$) and the observation we already made that \[u^p-e^{p-1}u = \prod_{i=0}^{p-1} (u-e\cdot i)\]
in $k[u]$. 
\end{remark}

For perfect complexes we get the following version of \Cref{sec:complexes-Hodge--Tate-1-complexes-on-ht-locus-for-r}.

\begin{lemma}
  \label{sec:from-complexes-hodge-perfect-complexes-for-ok-ht}
  The functor $\beta^+_\pi$ from \Cref{sec:complexes-Hodge--Tate-1-complexes-on-ht-locus-for-r} restricts to a fully faithful functor
  \[
    \beta^+_\pi\colon \mathcal{P}erf(\Spf(\O_K)^\HT)\to \mathcal{P}erf(\O_K[\Theta_\pi])
  \]
  whose essential image consists of $\pi$-adically complete perfect complexes $M$ over $\O_K[\Theta_\pi]$ for which $\Theta^p_\pi-e^{p-1}\Theta_\pi$ is nilpotent on $H^\ast(k\otimes^L_{\O_K}M)$. 
\end{lemma}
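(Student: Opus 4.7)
The plan is to bootstrap from the equivalence in \Cref{sec:complexes-Hodge--Tate-1-complexes-on-ht-locus-for-r}, so that fully faithfulness of the restriction to perfect complexes is automatic. What remains is to match the perfectness condition on the stack side with the stated condition on the $\O_K[\Theta_\pi]$-module side.

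For well-definedness, I would argue as follows. If $\mathcal{E}$ is perfect on $\Spf(\O_K)^{\HT}$, then by faithfully flat base change along the affine morphism $\rho_\pi$ the complex $\rho_\pi^\ast\mathcal{E}$ is perfect over $\O_K$, i.e.\ bounded with finitely generated cohomology. Since $\O_K[\Theta_\pi]$ is a regular Noetherian ring of Krull dimension $2$, any bounded complex with finitely generated cohomology over it is perfect, so $\beta_\pi^+(\mathcal{E})$ is perfect as an $\O_K[\Theta_\pi]$-module. The $\pi$-adic completeness is inherited from the $p$-adic completeness on the stack side. Finally, the local nilpotency of $\Theta_\pi^p - e^{p-1}\Theta_\pi$ on $H^\ast(k \otimes^L_{\O_K} M)$ provided by \Cref{sec:complexes-Hodge--Tate-1-complexes-on-ht-locus-for-r} upgrades to genuine nilpotency, because a locally nilpotent endomorphism of a finitely generated module over a Noetherian ring is nilpotent.

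For essential surjectivity, given $M$ satisfying the hypotheses, I would invoke \Cref{sec:complexes-Hodge--Tate-1-complexes-on-ht-locus-for-r} to obtain the unique $\mathcal{E} \in \mathcal{D}(\Spf(\O_K)^{\HT})$ with $\beta_\pi^+(\mathcal{E}) \simeq M$, and then argue that $\mathcal{E}$ is perfect. By faithfully flat descent along $\rho_\pi$, perfectness of $\mathcal{E}$ is equivalent to perfectness of the underlying $\O_K$-complex of $M$; combined with the derived $\pi$-adic completeness and derived Nakayama this reduces to showing that $k \otimes^L_{\O_K} M$ has finite-dimensional total cohomology over $k$.

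This last step is the main obstacle: perfectness over $\O_K[\Theta_\pi]$ is strictly weaker than perfectness of the underlying $\O_K$-complex (for instance $\O_K[\Theta_\pi]$ itself is perfect over itself but not over $\O_K$), so the upgrade must use the nilpotency hypothesis in a serious way. Concretely, $k \otimes^L_{\O_K} M$ is perfect over $k[\Theta_\pi]$, so its cohomology is a finitely generated $k[\Theta_\pi]$-module killed by some power of
\[
\Theta_\pi^p - e^{p-1}\Theta_\pi = \prod_{i=0}^{p-1}(\Theta_\pi - \bar{e}\cdot i) \in k[\Theta_\pi],
\]
where $\bar{e}$ denotes the image of $e$ in $k$. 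Any such finitely generated module is set-theoretically supported on the finite set of closed points $\{\bar{e}\cdot i : 0 \le i \le p-1\} \subseteq \Spec k[\Theta_\pi]$, and is therefore finite-dimensional over $k$. This completes the plan.
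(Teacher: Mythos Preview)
Your proposal is correct and follows essentially the same strategy as the paper: both use regularity of $\O_K[\Theta_\pi]$ for well-definedness, and for essential surjectivity both reduce to showing that $M$ is $\O_K$-perfect by checking that the reduction mod $\pi$ has finite-dimensional cohomology over $k$, using that it is finitely generated over $k[\Theta_\pi]$ and killed by a power of $\Theta_\pi^p - e^{p-1}\Theta_\pi$. Your route to this last point (base-changing perfectness over $\O_K[\Theta_\pi]$ to $k[\Theta_\pi]$) is slightly more direct than the paper's (truncation plus d\'evissage to a module killed by $\pi$), but the underlying idea is the same.
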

The perfect $\O_K[\Theta_\pi]$-module $M=\O_K[\Theta_\pi]/(\pi\Theta_\pi-1)$-module is zero modulo $\pi$. Hence, the $\pi$-completeness assumption on $M$ is necessary.
\begin{proof}
  By regularity of $\O_K[\Theta_\pi]$, a complex of $\O_K[\Theta_\pi]$-modules is perfect if its underlying complex of $\O_K$-modules is perfect. This shows that $\beta^+_\pi$ sends $\mathcal{P}erf(\Spf(\O_K)^\HT)$ to $\mathcal{P}erf(\O_K[\Theta_\pi])$. Then fully faithfulness follows from \Cref{sec:complexes-Hodge--Tate-1-complexes-on-ht-locus-for-r}. 
  
  Conversely, let $M\in \mathcal{P}erf(\O_K[\Theta_\pi])$ be $\pi$-adically complete perfect complex such that $\Theta^p_\pi-e^{p-1}\Theta_\pi$ is nilpotent on $H^\ast(k\otimes^L_{\O_K}M)$. We need to see that $M$ is perfect as a complex of $\O_K$-modules. It suffices to check this for $M\otimes^L_{\O_K} \O_K/\pi^n$ for all $n\geq 0$. Indeed, as $\O_K$ is $\pi$-adically complete this implies that $M$ is perfect as a complex of $\O_K$-modules. As canonical truncations of $M$ are again perfect $\O_K[\Theta_\pi]$-modules, this reduces to the case that $M$ is concentrated in a single degree. Using d\'evissage, we reduce to the case that $\pi M=0$. But then $M$ is a finitely generated $\O_K[\Theta_\pi]/(\Theta^p_\pi-e^{p-1}\Theta_\pi)^i$-module for some $i\geq 0$ and hence perfect as an $\O_K$-module.  
\end{proof}

Let $S$ be a $p$-complete $\mathcal{O}_K$-algebra and $g=(s,b)\in G_\pi(S)$ an $S$-valued point of $G_\pi$. 
As $G_\pi(S)$ is commutative, the multiplication by $g$ will induce an automorphism
\[
  \gamma_{g,M}\colon M\widehat{\otimes}_{\mathcal{O}_K}S\to M\widehat{\otimes}_{\mathcal{O}_K}S
\]
on any pair $(M,\Theta_\pi)\in \mathcal D(BG_\pi)$. 
The next lemma makes this action explicit, and also sheds more light on the condition on $\Theta_\pi$ in \Cref{t:beta}.
The formula also occurs in \cite[Theorem 1.3]{min2021hodge}.

\begin{lemma}
  \label{sec:complexes-Hodge--Tate-explicit-action} In the above notation, we have the equality
  \[
    \gamma_{g,M}=(1+e\cdot b)^{{\Theta_\pi}/e}:=\sum\limits_{n=0}^\infty \frac{b^n}{n!}\prod\limits_{i=0}^{n-1}(\Theta_\pi-e\cdot i)
  \]
  as endomorphisms of each cohomology module of $M\widehat{\otimes}_{\mathcal{O}_K}S$. 
\end{lemma}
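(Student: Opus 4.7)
The plan is to read off the action of $g=(s,b)\in G_\pi(S)$ from the Hopf algebra structure of $\mathcal{O}_{G_\pi}$, and to extract the coefficients using coassociativity. Using the isomorphism $G_\pi\cong\mathbb{G}_a^\sharp$ via $(t,a)\mapsto a$, we identify $\mathcal{O}_{G_\pi}\cong\widehat{\bigoplus}_{n\geq 0}\mathcal{O}_K\cdot\frac{a^n}{n!}$ with comultiplication $c\colon a\mapsto a_1+a_2+ea_1a_2$. An object of $\mathcal{D}(BG_\pi)$ gives rise on each cohomology module of $M$ to a coaction $\rho\colon H^i(M)\to H^i(M)\widehat{\otimes}\mathcal{O}_{G_\pi}$, which we expand as $\rho(m)=\sum_{n\geq 0}\theta_n(m)\cdot\frac{a^n}{n!}$ for some $\mathcal{O}_K$-linear endomorphisms $\theta_n$. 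The counit axiom forces $\theta_0=\mathrm{Id}$, while the very definition of the Sen operator (specializing to $S=\mathcal{O}_K[\varepsilon]/(\varepsilon^2)$ and $b=\varepsilon$) gives $\theta_1=\Theta_\pi$. By construction, the action $\gamma_{g,M}$ of $g=(s,b)$ is obtained by the specialization $a\mapsto b$, so
\[
\gamma_{g,M}(m)=\sum_{n\geq 0}\theta_n(m)\cdot\frac{b^n}{n!}.
\]

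Next I would derive a recursion for the $\theta_n$ from the coassociativity identity $(\rho\otimes\mathrm{Id})\circ\rho=(\mathrm{Id}\otimes c)\circ\rho$. The left-hand side equals $\sum_{n,k}\theta_k\theta_n(m)\cdot\frac{a_1^n}{n!}\frac{a_2^k}{k!}$, whereas the right-hand side, using $(a_1+a_2+ea_1a_2)^n=\sum_j\binom{n}{j}a_1^{n-j}a_2^j(1+ea_1)^j$, equals $\sum_{n,j}\theta_n(m)\cdot\frac{a_1^{n-j}}{(n-j)!}\cdot\frac{a_2^j}{j!}\cdot(1+ea_1)^j$. Reading off the coefficient of $\frac{a_2}{1!}$ and then comparing coefficients of $\frac{a_1^n}{n!}$ gives
\[
\Theta_\pi\,\theta_n=\theta_{n+1}+en\,\theta_n,\qquad\text{i.e.}\qquad\theta_{n+1}=(\Theta_\pi-en)\,\theta_n.
\]
Induction then yields $\theta_n=\prod_{i=0}^{n-1}(\Theta_\pi-ei)$, and substituting this back produces the desired formula
\[
\gamma_{g,M}(m)=\sum_{n\geq 0}\frac{b^n}{n!}\prod_{i=0}^{n-1}(\Theta_\pi-ei)(m).
\]

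The only delicate point is ensuring that the formal manipulation $\rho(m)=\sum_n\theta_n(m)\,\frac{a^n}{n!}$ and its evaluation at $a=b$ make rigorous sense at the level of each cohomology module, and that the resulting series converges. Convergence is guaranteed by $p$-adic completeness of $S$ together with the divided-power structure implicit in $\mathcal{O}_{G_\pi}$, which forces $\frac{b^n}{n!}\to 0$ as $n\to\infty$; this is exactly the condition on $b\in \mathbb{G}_a^\sharp(S)$. An alternative, perhaps cleaner route would bypass the coassociativity computation by invoking Cartier duality (Remarks~\ref{sec:complexes-Hodge--Tate-cartier-dual} and~\ref{sec:complexes-Hodge--Tate-cartier-dual-bis}): the pairing between $G_\pi$ and its Cartier dual sends the universal point $a$ and the dual coordinate $u$ (which acts as $\Theta_\pi$) to $(1+ea)^{u/e}=\sum_n\frac{a^n}{n!}\prod_{i=0}^{n-1}(u-ei)$, giving the formula directly; the coassociativity argument above is essentially an unpacking of this.
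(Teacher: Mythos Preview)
Your argument is correct and takes a genuinely different route from the paper. The paper reduces to the regular representation: using the embedding $M\hookrightarrow \rho_{\pi,\ast}\rho_\pi^\ast M$, it suffices to treat $M=\mathcal{O}_{G_\pi}$ with $\Theta_\pi=(1+ea)\frac{\partial}{\partial a}$, where the action of $g$ is the substitution $f(a)\mapsto f(a+b+eab)$. After the change of variables $t=1+ea$, this becomes $h(t)\mapsto h(st)$ with $s=1+eb$, and the paper then invokes the formal identity $h(st)=\exp\!\big(\tfrac{\Theta_\pi}{e}\log s\big)h(t)$ from \cite[Proposition~3.7.1]{bhatt2022absolute}, together with the binomial expansion $(1+y)^x=\sum_n \frac{y^n}{n!}\prod_{i=0}^{n-1}(x-i)$. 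Your approach instead stays with an arbitrary $M$, reads off the coaction coefficients $\theta_n$ directly, and extracts the recursion $\theta_{n+1}=(\Theta_\pi-en)\theta_n$ from coassociativity; this is more elementary and self-contained, avoiding both the reduction step and the external reference. The paper's route, on the other hand, makes the suggestive notation $(1+eb)^{\Theta_\pi/e}$ transparent via the exponential-of-logarithm interpretation, and exhibits the formula as a deformation of the $\mathbb{G}_m^\sharp$-case. One small remark on your convergence discussion: the relevant point is not just that $\tfrac{b^n}{n!}\to 0$, but that the coaction already lands in the $p$-completed tensor product $M\widehat{\otimes}\mathcal{O}_{G_\pi}=\widehat{\bigoplus}_n M\cdot\tfrac{a^n}{n!}$, so $\theta_n(m)\to 0$ in $M$ for every $m$; specializing along the continuous map $a\mapsto b$ then preserves convergence.
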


We recall that there exist the formal equalities
\begin{alignat*}{2}
    \exp(x\cdot \log(1+y))
  &  = && (1+y)^x\\
  &  = && \sum\limits_{n\geq 0} \binom{x}{n} y^n\\
   & = && \sum\limits_{n\geq 0}\frac{y^n}{n!}\prod\limits_{i=0}^{n-1}(x-i)
  \end{alignat*}
of power series in $\Q[[x,y]]$.

\begin{proof}
  For simplicity of notation, we assume $\mathcal{O}_K=S$. 
The general case is similar.
  Using a reduction to $M$ concentrated in degree $0$ and the natural morphism $M\to \rho_{\pi,\ast}(\rho^{\ast}_\pi M)$, it suffices to check the statement in the case that $M=\calO_{G_\pi}$ with $\Theta_\pi=(1+e\cdot a)\frac{\partial}{\partial a}$.
  Let $f(a)\in \calO_{G_\pi}=\widehat{\bigoplus\limits_{n\geq 0}} \mathcal{O}_K\frac{a^n}{n!}$.
  Then
  \[
    \gamma_{g,\calO_{G_\pi}}(f)(a)=f(a+b+e\cdot ab).
  \]
  We write $t:=1+e\cdot a$ and $h(t)=f(\frac{t-1}{e})$. 
Then $\gamma_{g,\calO_{G_\pi}}(f)(a)=h(s\cdot t)$ with $s=1+e\cdot b$. 
From the proof of \cite[Proposition 3.7.1]{bhatt2022absolute} we get
  \begin{alignat*}{2}
       h(s\cdot t)&= \sum\limits_{n\geq 0}\frac{\log(s)^n}{n!}(t\frac{\partial}{\partial t})^n h(t) \\
      &=  \exp(\frac{\Theta_\pi}{e}\log(s)) h(t)
\end{alignat*}
 from which we deduce the formal equality of power series in $a$
  \begin{equation}
    \label{eq:1}
       f(a+b+e\cdot ab)=(1+e\cdot b)^{\Theta_\pi/e}f(a).
  \end{equation} 
  From the formal equality
  \begin{equation}
    \label{eq:2}
       (1+y)^x=\sum\limits_{n\geq 0}\frac{y^n}{n!}\prod\limits_{i=0}^{n-1}(x-i)
  \end{equation} 
  mentioned before we conclude that (\Cref{eq:1}) converges in $\mathcal{O}_{G_\pi}$ by the convergence condition on $\Theta_\pi$. 
Namely, setting $y=e\cdot b$ and $x=\Theta_\pi/e$ we get
 \begin{alignat*}{2}
      & (1+e\cdot b)^{\Theta_\pi/e}
       &=& \sum\limits_{n\geq 0}\frac{e^nb^n}{n!}\prod\limits_{i=0}^{n-1}(\Theta_\pi/e-i) \\
      &&= &\sum\limits_{n\geq 0}\frac{b^n}{n!}\prod\limits_{i=0}^{n-1}(\Theta_\pi-i\cdot e) \end{alignat*}
  and $\prod\limits_{i=0}^{n-1}(\Theta_\pi-i\cdot e)$ converges to $0$ if $n\to \infty$.
\end{proof}
\subsection{An analytic variant}
\label{sec:an-analytic-variant-analytic-variant}
Finally, when we restrict attention to perfect complexes, we deduce an ``analytic'' variant of the functor $\beta_{\pi}^+$, which will be more closely related to $v$-vector bundles: Namely, by formally inverting $p$ on the source, $\beta^+_{\pi}$ defines a functor
\[
\beta_{\pi}:\mathcal{P}erf(\mathrm{Spf}(\mathcal{O}_K)^{\rm HT})\tf\to \mathcal{P}erf(K[\Theta_\pi]).\]
\begin{corollary}
  \label{sec:an-analytic-variant-corollary-analytic-variant}
 The functor $
\beta_{\pi}$
is fully faithful. Its essential image consists of complexes $M\in \mathcal{P}erf(K[\Theta_\pi])$ such that $H^\ast(M)$ is finite dimensional over $K$ and the action of $\Theta^p_\pi-e^{p-1}\Theta_\pi$ on  $H^\ast(M)$ is topologically nilpotent.
\end{corollary}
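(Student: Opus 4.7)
My approach is to deduce \Cref{sec:an-analytic-variant-corollary-analytic-variant} from its integral counterpart \Cref{sec:from-complexes-hodge-perfect-complexes-for-ok-ht} by inverting $p$. For full faithfulness, for any $\mathcal E,\mathcal F\in \mathcal{P}erf(\mathrm{Spf}(\mathcal{O}_K)^{\rm HT})$ the complex $\mathrm{RHom}_{\mathcal{O}_K[\Theta_\pi]}(\beta_\pi^+\mathcal E,\beta_\pi^+\mathcal F)$ is a perfect complex of $\mathcal{O}_K$-modules, by regularity of $\mathcal{O}_K[\Theta_\pi]$; hence taking $H^0$ commutes with inverting $p$. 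Combining this with the full faithfulness of $\beta_\pi^+$ gives
\[ \mathrm{Hom}_{\mathcal{P}erf(\mathrm{Spf}(\mathcal{O}_K)^{\rm HT})}(\mathcal E,\mathcal F)\tf = \mathrm{Hom}_{\mathcal{O}_K[\Theta_\pi]}(\beta_\pi^+\mathcal E,\beta_\pi^+\mathcal F)\tf = \mathrm{Hom}_{K[\Theta_\pi]}(\beta_\pi\mathcal E,\beta_\pi\mathcal F). \]

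One direction of the description of the essential image is then immediate: if $M^+\in \mathcal{P}erf(\mathcal{O}_K[\Theta_\pi])$ is as in \Cref{sec:from-complexes-hodge-perfect-complexes-for-ok-ht}, a Nakayama argument applied to the finitely generated modules $H^i(M^+)$ (using $\pi$-adic completeness) shows that $\Theta_\pi^p-e^{p-1}\Theta_\pi$ is topologically nilpotent on $H^i(M^+)$ and that each $H^i(M^+)$ is finitely generated over $\mathcal{O}_K$; inverting $p$ then yields both properties claimed for $M=M^+\tf$.

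For the converse, let $M\in \mathcal{P}erf(K[\Theta_\pi])$ satisfy the two conditions. Since $K[\Theta_\pi]$ is a principal ideal domain, the abelian category of finite-dimensional $K[\Theta_\pi]$-modules is hereditary, and hence every object of its bounded derived category splits as $M\cong \bigoplus_i H^i(M)[-i]$. It therefore suffices to construct an integral lift of a single $V:=H^i(M)$. Topological nilpotence of $\Theta_\pi^p-e^{p-1}\Theta_\pi$ on the finite-dimensional $K$-vector space $V$ means that each generalised eigenvalue $\lambda\in \overline K$ of $\Theta_\pi$ satisfies $|\lambda^p-e^{p-1}\lambda|<1$; since $|e|\leq 1$, this forces $|\lambda|\leq 1$, so the characteristic polynomial of $\Theta_\pi|_V$ lies in $\mathcal{O}_K[X]$. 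Starting from any $\mathcal{O}_K$-lattice $V_0\subset V$, Cayley--Hamilton ensures that $L:=\sum_{k\geq 0}\Theta_\pi^k V_0$ is a $\Theta_\pi$-stable $\mathcal{O}_K$-lattice that is finitely generated, hence finite free, over $\mathcal{O}_K$.

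The lattice $L$ is automatically $\pi$-adically complete and perfect over $\mathcal{O}_K[\Theta_\pi]$ (again by regularity), and topological nilpotence of $\Theta_\pi^p-e^{p-1}\Theta_\pi$ on $V=L\tf$ forces $(\Theta_\pi^p-e^{p-1}\Theta_\pi)^nL\subset \pi L$ for some $n\geq 0$, i.e.\ $\Theta_\pi^p-e^{p-1}\Theta_\pi$ is nilpotent on $L/\pi L=k\otimes^L_{\mathcal{O}_K}L$. Applying \Cref{sec:from-complexes-hodge-perfect-complexes-for-ok-ht} to $L$ produces an object $\mathcal E\in \mathcal{P}erf(\mathrm{Spf}(\mathcal{O}_K)^{\rm HT})$ with $\beta_\pi^+(\mathcal E)=L$, and in particular $\beta_\pi(\mathcal E)=V$; reassembling across cohomological degrees yields the desired preimage of $M$. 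The key step, where the structure of the problem genuinely enters, is the formality reduction, which turns the original derived-level question into the concrete one-variable lattice construction above.
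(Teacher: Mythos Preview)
Your proof is correct and follows the same overall strategy as the paper: deduce the analytic statement from the integral one (\Cref{sec:from-complexes-hodge-perfect-complexes-for-ok-ht}) by inverting $p$, and for the converse reduce to constructing a $\Theta_\pi$-stable $\mathcal{O}_K$-lattice in a single cohomology module. The differences are purely in the tactics used at two points.

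For the reduction to a single cohomological degree, the paper argues by induction on the amplitude via cones (d\'evissage), whereas you invoke formality: since $K[\Theta_\pi]$ is a PID (global dimension $\leq 1$), any bounded complex splits as the sum of its shifted cohomology groups. Both work; your formality argument is slightly slicker here, while the paper's d\'evissage is the more robust pattern that would survive in situations without formality.

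For the construction of the $\Theta_\pi$-stable lattice, the paper passes to a finite extension of $K$ to put $\Theta_\pi$ in Jordan normal form and then intersects back down, while you observe directly that integrality of the eigenvalues forces the characteristic polynomial to lie in $\mathcal{O}_K[X]$, so Cayley--Hamilton makes $\sum_{k\geq 0}\Theta_\pi^k V_0$ a finitely generated (hence free) $\mathcal{O}_K$-lattice. Your argument is more elementary and avoids the field extension entirely.

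One small remark on your full faithfulness paragraph: the justification ``by regularity of $\mathcal{O}_K[\Theta_\pi]$'' is not really the relevant point. What you need is that $\mathrm{RHom}_{\mathcal{O}_K[\Theta_\pi]}$ between perfect complexes is again perfect over $\mathcal{O}_K[\Theta_\pi]$ (automatic, no regularity needed), so that localizing at $p$ commutes with $\mathrm{RHom}$ and hence with $H^0$. The paper phrases this as ``perfect complexes are compact objects''. Your conclusion is correct either way.
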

Here, the cohomology $H^\ast(M)$ has its canonical topology as a finite dimensional $K$-vector space.
\begin{proof}
  By \Cref{sec:from-complexes-hodge-perfect-complexes-for-ok-ht} we can identify $\mathcal{P}erf(\Spf(\O_K)^\HT)$ with the full subcategory of $\mathcal{P}erf(\O_K[\Theta_\pi])$ given by $\pi$-complete objects $M$ such that $\Theta^p_\pi-e^{p-1}\Theta_\pi$ is nilpotent on $H^\ast(k\otimes^L_{\O_K}M)$.
  The natural functor $\mathcal{P}erf(\O_K[\Theta_\pi])\tf\to \mathcal{P}erf(K[\Theta_\pi])$ is fully faithful because perfect complexes are compact objects. This establishes fully faithfulness of $\beta_\pi$. 
  
  To see the description of the essential image, by induction on the amplitude and via considering cones, we can reduce to the case that $M$ is concentrated in degree $0$. Then $\Theta_\pi\colon M\to M$ is an endomorphism of a finite dimensional $K$-vector space whose characteristic polynomial has coefficients in $\O_K$. This implies that there exists a $\Theta_\pi$-stable $\O_K$-lattice $M^0$ in $M$. Indeed, if $L/K$ is a finite extension and $N^0\subseteq M\otimes_KL$ a $\Theta_\pi$-stable $\O_L$-lattice, then $M^0:=N^0\cap M$ is a $\Theta_\pi$-stable $\O_K$-lattice in $M$. Hence, for the existence of $M^0$ we may enlarge $K$. Then $\Theta_\pi$ can be assumed to have Jordan normal form, in which case the existence of $M^0$ is clear as all eigenvalues lie in $\O_K$ by integrality of the characteristic polynomial.
  Now any $\Theta_\pi$-stable $\O_K$-lattice $M^0$ in $M$ is $\pi$-adically complete and the $\Theta_\pi^p-e^{p-1}\Theta_\pi$-action on $M^0/\pi$ is nilpotent by the assumed topological nilpotence. By \Cref{sec:from-complexes-hodge-perfect-complexes-for-ok-ht} this implies that $M$ lies in the essential image as desired.
       \end{proof}

\section{Galois actions on Hodge--Tate stacks}
\label{sec:galo-acti-cart}

Let $C=\widehat{\overline{K}}$ be the completion of an algebraic closure of $K$. 
Let
\[
  f\colon \Spf(\mathcal{O}_C)\to \Spf(\mathcal{O}_K) 
\]
denote the natural morphism. 
As the natural map $\mathrm{Spf}(\mathcal{O}_C)^{\rm HT}\to \Spf(\mathcal{O}_C)$ is an isomorphism (\cite[Example 3.12]{bhatt2022prismatization}), the map $f$ lifts naturally to a map
\[
  \tilde{f}\colon \Spf(\mathcal{O}_C)\to \mathrm{Spf}(\mathcal{O}_K)^{\rm HT}
\]
over $\Spf(\mathcal{O}_K)$. 
Let $Z_\pi$ be the base change along $\tilde{f}$ of the $G_\pi$-torsor $\rho_\pi: \mathrm{Spf}(\mathcal{O}_K) \to \mathrm{Spf}(\mathcal{O}_K)^{\rm HT}$.

In this section, we would like to analyze explicitly the action of $\mathrm{Gal}(\overline{K}/K)$ on the ring of functions $\mathcal{O}(Z_\pi)$ under some additional choices. 
We do so in \Cref{sec:expl-funct-explicit-functoriality} in a slightly more general context. 
Using this description, we will relate in \Cref{sec:comp-colm-ring-comparison-to-b-sen} the ring $\mathcal{O}(Z_\pi)[1/p]$ to other more familiar period rings from $p$-adic Hodge theory (most notably the ring $B_{\rm Sen}$ previously introduced by Colmez) and compute its Galois cohomology in \Cref{sec:calc-galo-cohom}.

\subsection{Explicit functoriality }
\label{sec:expl-funct-explicit-functoriality}

Let $K, K^\prime$ be $p$-adic fields with rings of integers $R:=\mathcal{O}_K, R^\prime:=\mathcal{O}_{K^\prime}$ and (perfect) residue fields $k,k^\prime$.
We assume that $K\to C, K^\prime\to C^\prime$ are two extensions of non-archimedean fields with $C, C^\prime$ algebraically closed. 
Moreover, we let $\sigma\colon C\to C^\prime$ be a continuous homomorphism, which we require to induce a homomorphism $\tau\colon K\to K^\prime$.
Let
\[
  f\colon \Spf(\mathcal{O}_C)\to \Spf(R),\ f^\prime\colon \Spf(\mathcal{O}_{C^\prime})\to \Spf(R^\prime)
\]
denote the natural morphisms. 
As recalled above in the particular case $C=\widehat{\overline{K}}$, the natural map $\mathrm{Spf}(\mathcal{O}_C)^{\rm HT}\to \Spf(\mathcal{O}_C)$ is an isomorphism (\cite[Example 3.12]{bhatt2022prismatization}) and thus the map $f$ lifts naturally to a map
\[
  \tilde{f}\colon \Spf(\mathcal{O}_C)\to \mathrm{Spf}(R)^{\rm HT}
\]
over $\Spf(R)$. 
Similarly, $f^\prime$ lifts naturally to a map $\tilde{f^\prime}\colon \Spf(\mathcal{O}_{C^\prime})\to \mathrm{Spf}(R^\prime)^{\rm HT}$.
Explicitly, if $S$ is a $p$-adically complete $R$-algebra, $(A_\inf,J)$ the perfect prism associated to $\mathcal{O}_C$, then $\tilde{f}$ maps a morphism $g\colon \mathcal{O}_C\to S$ to the image of the $\mathcal{O}_C$-point
\[
  \left(J\otimes_{A_{\inf}} W(\mathcal{O}_C)\to W(\mathcal{O}_C), R\to \mathcal{O}_C\cong A_\inf/J\to \overline{W(\mathcal{O}_C)}\right)\in \mathrm{Spf}(R)^{\rm HT}(\mathcal{O}_C)
\]
along the map $\mathrm{Spf}(R)^{\rm HT}(\mathcal{O}_C)\to \mathrm{Spf}(R)^{\rm HT}(S)$ induced by $g$. 
Similarly the map $\tilde{f^\prime}$ can be described using the perfect prism $(A_\inf^\prime, J^\prime)$ associated with $\mathcal{O}_{C^\prime}$.

From the naturality of the Hodge--Tate stack we deduce that there exists a natural $2$-commutative diagram
\[
  \xymatrix{
    \Spf(\mathcal{O}_{C^\prime})\ar[r]^\sigma\ar[d]^{\tilde{f^\prime}} & \Spf(\mathcal{O}_C)\ar[d]^{\tilde{f}} \\
    \mathrm{Spf}(R^\prime)^{\rm HT} \ar[r]^\tau & \mathrm{Spf}(R)^{\rm HT},
  }
\]
or more precisely a natural isomorphism $\iota_\sigma\colon \tilde{f}\circ \sigma\cong \tau\circ\tilde{f}^\prime$ between two points in the groupoid $\mathrm{Spf}(R)^{\rm HT}(\mathcal{O}_{C^\prime})\cong \mathrm{Mor}_R(\Spf(\mathcal{O}_{C^\prime}), \mathrm{Spf}(R)^{\rm HT})$.

Set $\tilde{\pi}:=\tau(\pi)\in R^\prime$ and assume that this element is a uniformizer in $R^\prime$. 
We get two prisms 

\[
  (A_\pi, I_\pi) = (W(k)[[u]], (E_\pi(u))) \]
  and 
  \[    (A^\prime_{\tilde{\pi}}, I_{\tilde{\pi}}^\prime) = (W(k^\prime)[[u^\prime]], (E_{\tilde{\pi}}(u')))
\]
 lifting $R$ and $R^\prime$ respectively via the morphisms
\[
  A_\pi=W(k)[[u]]\to R, u \mapsto \pi, ~~ A^\prime_{\tilde{\pi}}=W(k^\prime)[[u^\prime]]\to R^\prime,\ u^\prime\mapsto \tilde{\pi}.
\]
As explained in \Cref{sec:recollection-ht-cw-stack}, these choices give rise to two maps
\[
  \rho_\pi : \mathrm{Spf}(R) \to \mathrm{Spf}(R)^{\rm HT}, ~~ \rho_{\tilde{\pi}}^\prime\colon \Spf(R^\prime)\to \mathrm{Spf}(R^\prime)^{\rm HT}
\]
with group sheaves of automorphisms
\[
  G_\pi = \{(t,a)\in \mathbb{G}_m^\sharp\ltimes \mathbb{G}_a^\sharp\ |\ t=1+e \cdot a\}, ~~ G_{\tilde{\pi}}=\{(t,a)\in \mathbb{G}_m^\sharp\ltimes \mathbb{G}_a^\sharp\ |\ t=1+\tilde{e}\cdot a\},
\]
where
\[
 e:=E^\prime_\pi(\pi), ~~  \tilde{e}:=E^\prime_{\tilde{\pi}}(\tilde{\pi}).
\]
Let $\tau_0\colon W(k)\to W(k^\prime)$ be the homomorphism induced by $\tau$. 
Then $\tau_0$ extends to a homomorphism
\[
  \tau_A\colon A_\pi\to A_{\tilde{\pi}}^\prime
\]
by sending $u$ to $u^\prime$. 
This yields a homomorphism $(A_\pi, I_\pi)\to (A_{\tilde{\pi}}^\prime, I_{\tilde{\pi}}^\prime)$ of prisms, which reduces to the homomorphism $\tau\colon R\cong A_\pi/I_\pi\to A_{\tilde{\pi}}^\prime/I^\prime_{\tilde{\pi}}\cong R^\prime$. 
By naturality of \cite[Construction 3.10]{bhatt2022prismatization} we obtain a $2$-commutative diagram
\[
  \xymatrix{
    \Spf(R^\prime)\ar[r]^{\tau}\ar[d]^{\rho_{\tilde{\pi}}^\prime} & \Spf(R) \ar[d]^{\rho_\pi} \\
    \mathrm{Spf}(R^\prime)^{\rm HT} \ar[r]^\tau & \mathrm{Spf}(R)^{\rm HT}.
  }
\]
To describe the implicit isomorphism in $\mathrm{Spf}(R)^{\rm HT}(R^\prime)$, note that the diagram
\[
  \xymatrix{
    A_\pi \ar[r]\ar[d]^{\tau_A} & W(R)\ar[d]^{W(\tau)} \\
    A^\prime_{\tilde{\pi}} \ar[r]& W(R^\prime)
  }
\]
commutes and that the natural map $I_\pi\otimes_{A_\pi}A^\prime_{\tilde{\pi}}\to I^\prime_{\tilde{\pi}}$ is an isomorphism. 
From here one sees that the implicit isomorphism is induced by the isomorphism
\[
  I_\pi\otimes_{A_\pi} W(R)\otimes_{W(R)}W(R^\prime)\to I_{\tilde{\pi}}\otimes_{A_{\tilde{\pi}}} W(R^\prime),\ i\otimes x\otimes y\mapsto \tau_A(i)\otimes W(\tau)(x)y.
\]
As in the beginning of \Cref{sec:galo-acti-cart}, we define $Z_\pi\to \Spf(\mathcal{O}_C)$ by the fiber product diagram
\[
  \xymatrix{
    Z_\pi\ar[r]\ar[d] & \Spf(\mathcal{O}_C)\ar[d]^{\tilde{f}} \\
    \Spf(R) \ar[r]^{\rho_\pi} & \mathrm{Spf}(R)^{\rm HT}.
  }
\]
By faithfully flat descent for $G_\pi$-torsors
\[
  Z_\pi\cong \Spf(A_\en)
\]
for some $\mathcal{O}_C$-algebra $A_\en$ with $G_\pi$-action.
Similarly, we can define $Z^\prime_{\tilde{\pi}}=\Spf({A^{\prime}_{\en}})$.
From the $2$-commutative diagram
\[
  \begin{tikzcd}
     \Spf(R^\prime) \arrow[r,"{\rho^\prime_{\tilde{\pi}}}"]\arrow[d,"\tau"] & \mathrm{Spf}(R^\prime)^{\rm HT} \arrow[d] & \Spf(\mathcal{O}_{C^\prime})\arrow[l,"{\tilde{f^\prime}}"']\arrow[d, "{\sigma}"]\\
     \Spf(R) \arrow[r,"{\rho_\pi}"] & \mathrm{Spf}(R)^{\rm HT} & \Spf(\mathcal{O}_C)\arrow[l,"\tilde{f}"'] 
   \end{tikzcd}
\]
 we deduce from the universal property of the fiber product a natural map
 \[
   \sigma_Z\colon Z_{\tilde{\pi}}^\prime\to Z_{\pi} 
 \]
Our aim is to make $\sigma_Z$ more explicit. 
For this we first have to construct a suitable trivialization of the $G_\pi$-torsor $Z_\pi\to \Spf(\mathcal{O}_C)$ and the $G_{\tilde{\pi}}$-torsor $Z^\prime_{\tilde{\pi}}\to \Spf(\mathcal{O}_{C^\prime})$.

If $\pi^\flat=(\pi, \pi^{1/p},\ldots)\in \mathcal{O}_C^\flat$ is a compatible system of $p$-power roots of $\pi$, then the embedding
\[
  \iota_{\pi^\flat}\colon A_\pi\to A_\inf,\ u\mapsto [\pi^\flat]
\]
extends to a morphism of prisms $(A_\pi,I_\pi)\to (A_\inf,J)$ and yields an isomorphism $\gamma_{\pi^\flat}$ in $\mathrm{Spf}(R)^{\rm HT}(\mathcal{O}_C)$ between the points corresponding to $\tilde{f}$ and the composition $\Spf(\mathcal{O}_C)\to \Spf(R)\xrightarrow{\rho_\pi} \mathrm{Spf}(R)^{\rm HT}$. 
Concretely, $\gamma_{\pi^\flat}$ is the isomorphism of objects in $\mathrm{Spf}(R)^{\rm HT}(\O_C)$
\[
\begin{tikzcd}[column sep = {0.3cm},row sep =0.15cm]
	\Big(I_\pi\otimes_{A_\pi} W(\mathcal{O}_C) \arrow[dd] \arrow[r] & W(\mathcal{O}_C) \arrow[dd, equal] & {,} & R\arrow[r,equal]\arrow[dd]& A_\pi/I_\pi  \arrow[r]    & \overline{W(\mathcal{O}_C)}\Big) \arrow[dd, equal] \\
	&                                                  &     &                                            &                                             \\
	\Big(J\otimes_{A_\inf}W(\mathcal{O}_C) \arrow[r]                & W(\mathcal{O}_C)                                 & {,} & R\arrow[r]& \mathcal{O}_C\cong A_\inf/J \arrow[r] & \overline{W(\mathcal{O}_C)}\Big)           
\end{tikzcd}\]
induced by the $W(\mathcal{O}_C)$-linear isomorphism
\[
  I_\pi\otimes_{A_\pi} W(\mathcal{O}_C)\cong J\otimes_{A_\inf} W(\mathcal{O}_C),\ i\otimes x\mapsto \iota_{\pi^\flat}(i)\otimes x.
\]
Similarly, a choice $\tilde{\pi}^\flat=(\tilde{\pi},\tilde{\pi}^{1/p},\ldots)\in \mathcal{O}_{C'^\flat}$ of $p$-power compatible $p$-power roots of $\tilde{\pi}$ in $\mathcal{O}_{C^\prime}$ yields an isomorphism between $\tilde{f^\prime}$ and the composition
\[
  \Spf(\mathcal{O}_{C^\prime})\to \Spf(R^\prime)\xrightarrow{\rho^\prime_{\tilde{\pi}}} \mathrm{Spf}(R^\prime)^{\rm HT}.
\]
We note that a possible such choice is given by $\sigma(\tilde{\pi}^\flat)$, but we explicitly want to allow greater freedom for this choice to later compute Galois actions.

From here we see that $\gamma_{\pi^\flat}, \gamma_{\tilde{\pi}^\flat}$ induce isomorphisms
\[
  \gamma_{\pi^\flat}\colon G_{\pi,\mathcal{O}_C}:= G_{\pi}\times_{\Spf(R)}\Spf(\mathcal{O}_C)\xrightarrow{\simeq} Z_\pi
\]
and
\[
  \gamma_{\tilde{\pi}^\flat}\colon G_{\tilde{\pi},\mathcal{O}_{C^\prime}}:=G_{\tilde{\pi}}\times_{\Spf(R^\prime)}\Spf(\mathcal{O}_{C^\prime})\xrightarrow{\simeq} Z^\prime_{\tilde{\pi}}. 
\]
 We can conjugate the map $\sigma_Z\colon Z^\prime_{\tilde{\pi}}\to Z_\pi$ to get the composition 
\[
  G_{\tilde{\pi},\mathcal{O}_{C^\prime}}\xisomarrow{\gamma_{\tilde{\pi}^\flat}} Z^\prime_{\tilde{\pi}}\xrightarrow{\sigma_Z} Z_\pi \xisomarrow{\gamma_{\pi^\flat}^{-1}} G_{\pi,\mathcal{O}_C},
\]
which lies over the morphism $\sigma\colon \Spf(\mathcal{O}_{C^\prime})\to \Spf(\mathcal{O}_C)$. 
Base changing along $\sigma$ yields a map
\[
  \sigma_{G}\colon G_{\tilde{\pi},\mathcal{O}_{C^\prime}}\to \sigma^\ast G_{\pi,\mathcal{O}_C}:=G_{\pi}\times_{\Spf(R)}\Spf(\mathcal{O}_{C^\prime})
\]
(we write $\sigma_G^{\pi^\flat, \tilde{\pi}^\flat}$ if we want to stress its dependence on the choices of $\pi^\flat$ and $\tilde{\pi}^\flat$).

We now choose a compatible system
\[
  \varepsilon=(1,\zeta_p,\ldots)\in \mathcal{O}_{C^\prime}^\flat
\]
of $p$-power primitive roots of unity.
Then we can write
\[
  \sigma([\pi^\flat])=[\varepsilon]^{c(\sigma)}[\tilde{\pi}^\flat]\in A^\prime_{\inf}
\]
for a unique element $c(\sigma)\in \Z_p$. 
Indeed, this follows as $\sigma(\pi)=\tau(\pi)=\tilde{\pi}$.
Let us note that
\[
  \tau_A(E_\pi(u))=E_{\tilde{\pi}}(u^\prime).
\]
We furthermore introduce the elements
\[
  \mu:=[\varepsilon]-1\in A^\prime_\inf
\]
and
\[
  z:=\tilde{\pi}\cdot \theta^\prime(\frac{\mu}{E_{\tilde{\pi}}([\tilde{\pi}^\flat])})\in \mathcal{O}_{C^\prime}
\]
(here $\theta^\prime\colon A^\prime_\inf\to \mathcal{O}_{C'}$ is Fontaine's map).
Let us note that if $\xi:=\frac{\mu}{\varphi^{-1}(\mu)}=1+[\varepsilon^{\frac{1}{p}}]+\ldots + [\varepsilon^{\frac{p-1}{p}}]$, then $\frac{\xi}{E_{\tilde{\pi}}([\tilde{\pi}^\flat])} \in A_{\rm inf}^\prime$ is a unit and hence also $\theta^\prime(\frac{\xi}{E_{\tilde{\pi}}([\tilde{\pi}^\flat])}) \in \mathcal{O}_{C^\prime}$ is a unit. 
Therefore 
\[
  z = \tilde{\pi} \cdot \theta^\prime(\frac{\xi}{E_{\tilde{\pi}}([\tilde{\pi}^\flat])}) \theta^\prime(\varphi^{-1}(\mu))  \in \tilde{\pi}\cdot (\zeta_p-1)\mathcal{O}_C.
\]
In particular, $z\in \mathbb{G}_a^\sharp(\mathcal{O}_{C^\prime})$.

The main result of this subsection is the following statement.
\begin{proposition}
  \label{sec:galo-acti-cart-1-description-of-sigma-z}
  The map
  \[
    \sigma_G\colon G_{\tilde{\pi},\mathcal{O}_{C^\prime}}\to \sigma^\ast G_{\pi,\mathcal{O}_C}
  \]
  is given by the map
  \[
    (t,a)\mapsto ((1+\tilde{e}\cdot z\cdot c(\sigma))^{-1}\cdot t, \frac{1}{1+\tilde{e}z c(\sigma)}\frac{\tilde{e}}{e}(a-z\cdot c(\sigma)). 
  \]
\end{proposition}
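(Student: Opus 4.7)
The plan is to trace through the explicit prismatic constructions of all the maps involved. I first observe that $\sigma_Z$ is a morphism of torsors compatible with a natural group homomorphism $\tau_G\colon G_{\tilde{\pi},\O_{C'}}\to \sigma^*G_{\pi,\O_C}$ coming from applying $\Spf(-)^\HT$ to the prism map $\tau_A$. The torsor compatibility then forces $\sigma_G(g')=h\cdot \tau_G(g')$ for a single twist element $h:=\sigma_G(1,0)\in \sigma^*G_\pi(\O_{C'})$; expanding this product in the semi-direct product law $(s_1,b_1)(s_2,b_2)=(s_1s_2,b_1+s_1b_2)$ on $\mathbb{G}_m^\sharp\ltimes\mathbb{G}_a^\sharp$ directly reproduces the stated formula, so it suffices to compute $\tau_G$ and $h$ separately.

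To compute $\tau_G$, note that the $2$-cell $\beta$ is induced by $\tau_A(E_\pi(u))=\tau E_\pi(u')=Q(u')\cdot E_{\tilde{\pi}}(u')$, where $\tau E_\pi=Q\cdot E_{\tilde{\pi}}$ in $K_0'[X]$. Differentiating gives $\tau(e)=E_\pi'(\tilde\pi)=Q(\tilde\pi)\tilde e$, so $Q(\tilde\pi)=e/\tilde e$ in $\O_{C'}$. Tracing how the $G_{\tilde\pi}$-action on $I'_{\tilde\pi}\otimes_{A'_{\tilde\pi}}W(\O_{C'})$ transports under $\beta^{-1}$ to $I_\pi\otimes_{A_\pi}W(\O_{C'})$ then yields $\tau_G(t,a)=(t,\tilde e\,a/e)$, the redefinition of the $a$-coordinate being forced by comparing the constraints $t-1=\tilde e\,a$ and $t-1=e\cdot(\tilde e\,a/e)$.

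For the twist $h$, the two isomorphisms $\iota_\sigma\circ \sigma^*\gamma_{\pi^\flat}$ and $\tau_\HT(\gamma_{\tilde\pi^\flat})\circ \beta$ both go from the Cartier--Witt divisor $I_\pi\otimes_{A_\pi}W(\O_{C'})$ of $\rho_\pi\circ g\circ\sigma$ to the divisor $J'\otimes_{A'_\inf}W(\O_{C'})$ of $\tilde f\circ\sigma$, and differ by the action of $h$ on the source. Using $\sigma([\pi^\flat])=[\varepsilon]^{c(\sigma)}[\tilde\pi^\flat]$, the former sends the generator $E_\pi(u)\otimes 1$ to $E_\pi([\varepsilon]^{c(\sigma)}[\tilde\pi^\flat])\otimes 1$, while the latter sends it to $\tau E_\pi([\tilde\pi^\flat])\otimes 1$. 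The $t$-component of $h$ is therefore the $W^\times[F](\O_{C'})$-element whose image in $\O_{C'}^\times$ (via reduction modulo $VW(\O_{C'})$, i.e.,~modulo $J'$ via $\theta'$) equals the ratio of these two elements in $A'_\inf$.

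The key computation is a Taylor expansion. Setting $Y:=([\varepsilon]^{c(\sigma)}-1)[\tilde\pi^\flat]\in \mu A'_\inf\subseteq J'$, one has
\[
E_\pi([\varepsilon]^{c(\sigma)}[\tilde\pi^\flat])\equiv E_\pi([\tilde\pi^\flat])+E_\pi'([\tilde\pi^\flat])\cdot Y \pmod{(J')^2},
\]
with $E_\pi([\tilde\pi^\flat])=\tau E_\pi([\tilde\pi^\flat])$ because the coefficients of $E_\pi$ lie in $W(k)\subseteq A'_\inf$. Combining this with $[\varepsilon]^{c(\sigma)}-1\equiv c(\sigma)\mu\pmod{\mu^2}$, $\theta'(E_\pi'([\tilde\pi^\flat]))=\tau(e)=Q(\tilde\pi)\tilde e$, and the definition $z=\tilde\pi\,\theta'(\mu/E_{\tilde\pi}([\tilde\pi^\flat]))$ yields
\[
\theta'\!\left(\frac{E_\pi([\varepsilon]^{c(\sigma)}[\tilde\pi^\flat])}{\tau E_\pi([\tilde\pi^\flat])}\right)=1+\tilde e\,z\,c(\sigma)\quad\text{in }\O_{C'}.
\]
Since $\sigma_G(1,0)$ acts via the \emph{inverse} of this ratio (because it measures the automorphism correcting the $\tau$-picture into the $\sigma$-picture on the source), its $t$-component equals $(1+\tilde e\,z\,c(\sigma))^{-1}$, and the constraint $t-1=ea$ then forces its $a$-component to be $-\tilde e\,z\,c(\sigma)/(e(1+\tilde e\,z\,c(\sigma)))$. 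The main obstacle will be correctly tracking the direction of composition so that the inverse appears where the statement requires, and justifying that the higher-order Taylor terms lie in $J'$ after division by the generator $\tau E_\pi([\tilde\pi^\flat])$ of $J'$ and therefore vanish under $\theta'$.
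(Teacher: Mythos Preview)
Your proposal is correct and follows essentially the same route as the paper: both arguments reduce to computing the first coordinate $s$ of $\sigma_G(t,a)$ by comparing the images of the generator $E_\pi(u)$ under the two composite identifications of Cartier--Witt divisors, and both use the same Taylor expansion modulo $(J')^2$ (the paper packages this as \Cref{sec:galo-acti-cart-2-description-of-quotient-of-eisenstein-polynomials-of-uniformizers}) to evaluate $\theta'$ of the resulting ratio as $1+\tilde e z c(\sigma)$, after which the $a$-component is forced by the constraint $s=1+e\cdot b$. Your only organizational difference is that you explicitly factor $\sigma_G$ as a twist $h$ times the group homomorphism $\tau_G$ induced by $\tau_A$, whereas the paper writes the single relation $h=\frac{E_{\tilde\pi}([\tilde\pi^\flat])}{E_{\tilde\pi}(\sigma([\pi^\flat]))}\cdot g$ directly from the large $2$-commutative diagram; these are equivalent since $G_\pi$ is commutative. (Note incidentally that under the paper's standing hypothesis that $\tilde\pi=\tau(\pi)$ is a uniformizer, one has $\tau_A(E_\pi(u))=E_{\tilde\pi}(u')$ exactly, so your factor $Q$ is $1$ and $\tau(e)=\tilde e$; the $\tilde e/e$ in the final formula is thus literally $1$ once $e$ is read in $\mathcal{O}_{C'}$ via $\tau$.)
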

\begin{proof}
  By flatness of both formal schemes over $\Spf(\mathcal{O}_{C^\prime})$ it suffices to identify $\sigma_G$ on $\mathcal{O}_{C^\prime}$-algebras $S$ which are $p$-torsion free. 
In particular, a point $(s,b)\in \sigma^\ast G_{\pi,\mathcal{O}_C}(S)\subseteq \mathbb{G}_m^\sharp(S)\ltimes \mathbb{G}_a^\sharp(S)$ is already determined by $s$ as the equation $s=1+e\cdot b\in S$ holds.
  Now, assume that $(t,a)\in G_{\tilde{\pi},\mathcal{O}_C}(S)$ is a given point, and let $g\in W^\times[F](S)\cong \mathbb{G}_m^\sharp(S)$ be the element corresponding to $t$ (in particular, $g=(t,\ldots,)$ in Witt coordinates). 
Now $\sigma_G(t,a)$ is by definition the element in $\sigma^\ast G_{\pi,\mathcal{O}_C}$ determined by the isomorphism of the two outer compositions in the $2$-commutative diagram
  
\[\begin{tikzcd}
	&& {\Spf(S)} \\
	\\
	{\Spf(R^\prime)} && {\mathrm{Spf}(R^\prime)^{\rm HT}} & {\Spf(R^\prime)} & {\Spf(\mathcal{O}_{C^\prime})} \\
	{\Spf(R^\prime)} && {\mathrm{Spf}(R^\prime)^{\rm HT}} && {\Spf(\mathcal{O}_{C^\prime})} \\
	{\Spf(R)} && {\mathrm{Spf}(R)^{\rm HT}} && {\Spf(\mathcal{O}_C)} \\
	{\Spf(R)} && {\mathrm{Spf}(R)^{\rm HT}} & {\Spf(R)} & {\Spf(\mathcal{O}_C)}
	\arrow["{\rho_\pi}", from=6-1, to=6-3]
	\arrow["{\rho_\pi}"', from=6-4, to=6-3]
	\arrow[from=6-5, to=6-4]
	\arrow[from=5-5, to=6-5,equal]
	\arrow[from=5-3, to=6-3,equal]
	\arrow[from=5-1, to=6-1,equal]
	\arrow["{\rho_\pi}", from=5-1, to=5-3]
	\arrow["{\tilde{f}}"', from=5-5, to=5-3]
	\arrow["\sigma", from=4-5, to=5-5]
	\arrow["\tau", from=4-3, to=5-3]
	\arrow["\tau", from=4-1, to=5-1]
	\arrow["{\rho_{\tilde{\pi}}^\prime}", from=4-1, to=4-3]
	\arrow["{\tilde{f^\prime}}"', from=4-5, to=4-3]
	\arrow[from=3-5, to=4-5,equal]
	\arrow[from=3-5, to=3-4]
	\arrow["{\rho_{\tilde{\pi}}^\prime}"', from=3-4, to=3-3]
	\arrow["{\rho_{\tilde{\pi}}^\prime}", from=3-1, to=3-3]
	\arrow[from=3-1, to=4-1, equal]
	\arrow[from=1-3, to=3-1]
	\arrow[from=1-3, to=3-5]
	\arrow[from=3-3, to=4-3, equal]
      \end{tikzcd}\]
    where the isomorphism of the top roof is defined by $g$, the isomorphism for the upper right square is given by $\gamma_{\tilde{\pi}^\flat}$ and the one for the lower right by $\gamma_{\pi^\flat}$. 
For the other squares the implicit isomorphisms are obtained by functoriality of the Hodge--Tate stack or of \cite[Construction 3.10]{bhatt2022prismatization}. 
Write $\sigma_G(t,a)=(s,b)$ and let $h\in W^\times[F](S)\cong \mathbb{G}_m^\sharp(S)$ be the point determined by $s$. Let us now for simplicity set $S=\O_{C'}$. Then
the commutativity of the outer diagram implies that the two compositions
    
     \[\begin{array}{ c l }
         &I_\pi\otimes_{A_\pi} W(\mathcal{O}_C)\otimes_{W(\mathcal{O}_C)} W(\mathcal{O}_{C^\prime}) \\
        \xrightarrow{\tau_A\otimes \mathrm{Id}_{W(\mathcal{O}_{C^\prime})}} & I_{\tilde{\pi}}^\prime\otimes_{A^\prime_{\tilde{\pi}}} W(\mathcal{O}_{C^\prime}) \\
        \xrightarrow{\cdot g} & I_{\tilde{\pi}}^\prime\otimes_{A^\prime_{\tilde{\pi}}} W(\mathcal{O}_{C^\prime}) \\
        \xrightarrow{\gamma_{\tilde{\pi}^\flat}} & J^\prime\otimes_{A^\prime_\inf} W(\mathcal{O}_{C^\prime}) \\
      \end{array}\]
    
    and
    \[
      \begin{array}{ c l }
        & I_\pi\otimes_{A_\pi} W(\mathcal{O}_C)\otimes_{W(\mathcal{O}_C)} W(\mathcal{O}_{C^\prime}) \\
        \xrightarrow{\cdot h} & I_\pi\otimes_{A_\pi} W(\mathcal{O}_C)\otimes_{W(\mathcal{O}_C)} W(\mathcal{O}_{C^\prime})\\
        \xrightarrow{\gamma_{\pi^\flat}\otimes \mathrm{Id}_{W(\mathcal{O}_{C^\prime})}} & J\otimes_{A_\inf} W(\mathcal{O}_C)\otimes_{W(\mathcal{O}_C)} W(\mathcal{O}_{C^\prime}) \\
        \xrightarrow{\sigma}& J^\prime\otimes_{A^\prime_{\inf}} W(\mathcal{O}_{C^\prime})
      \end{array}
    \]
    agree.
    Evaluating both morphisms on the element $E_{\pi}(u)\otimes 1\otimes 1$ yields the equality (in $J^\prime\otimes_{A^\prime_{\inf}} W(\mathcal{O}_{C^\prime})$)
    \[
      E_{\tilde{\pi}}([\tilde{\pi}^\flat])\otimes g=\sigma(E_\pi([\pi^\flat]))\otimes h.
    \]
    Now, $\sigma(E_\pi([\pi^\flat]))=E_{\tilde{\pi}}(\sigma([\pi^\flat]))$, which implies that
    \[
      h=\frac{E_{\tilde{\pi}}([\tilde{\pi}^\flat])}{E_{\tilde{\pi}}(\sigma([\pi^\flat]))}\cdot g,
    \]
    where $\frac{E_{\tilde{\pi}}([\tilde{\pi}^\flat])}{E_{\tilde{\pi}}(\sigma([\pi^\flat]))}\in A^\prime_\inf$ is a unit and the multiplication with it refers to the $A^\prime_\inf$-algebra structure of $W(S)$. 
By \Cref{sec:galo-acti-cart-2-description-of-quotient-of-eisenstein-polynomials-of-uniformizers} below, we can conclude that
    \[
      (1+\tilde{e}\cdot z \cdot c(\sigma))^{-1}\cdot t=s.
    \]
    From this we can conclude by a small calculation that
    \[
      b=\frac{1}{1+\tilde{e}z c(\sigma)}\frac{\tilde{e}}{e}(a-z\cdot c(\sigma))
    \]
    as desired.
\end{proof}

\begin{lemma}
  \label{sec:galo-acti-cart-2-description-of-quotient-of-eisenstein-polynomials-of-uniformizers}
  Under the map $\theta^\prime\colon A^\prime_{\inf}\to \mathcal{O}_{C^\prime}$ the element
  \[
    \frac{E_{\tilde{\pi}}(\sigma([\pi^\flat]))}{E_{\tilde{\pi}}([\tilde{\pi}^\flat])}\in A^\prime_{\inf}
  \]
  is mapped to the element
  \[
    1+\tilde{e} \cdot z\cdot c(\sigma)\in \mathcal{O}_C.
  \]
\end{lemma}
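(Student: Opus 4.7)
The plan is to reduce the statement to a short Taylor-expansion computation in $A'_\inf$ modulo $J^{\prime 2}$, where $J'=\ker(\theta')$ is the principal ideal generated by $E_{\tilde{\pi}}([\tilde{\pi}^\flat])$. First, I would substitute the defining equation $\sigma([\pi^\flat]) = [\varepsilon]^{c(\sigma)}\cdot [\tilde{\pi}^\flat] = (1+\mu)^{c(\sigma)}\cdot [\tilde{\pi}^\flat]$ into the numerator. Because $\theta'(\mu)=0$, both $\sigma([\pi^\flat])$ and $[\tilde{\pi}^\flat]$ reduce to $\tilde{\pi}$ under $\theta'$, so both $E_{\tilde{\pi}}(\sigma([\pi^\flat]))$ and $E_{\tilde{\pi}}([\tilde{\pi}^\flat])$ lie in $J'$. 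Since $E_{\tilde{\pi}}([\tilde{\pi}^\flat])$ generates $J'$ as a nonzero-divisor, the quotient lies in $A'_\inf$, and writing it as $1+\alpha$, one has
\[
  \alpha\cdot E_{\tilde{\pi}}([\tilde{\pi}^\flat])\ =\ E_{\tilde{\pi}}\!\big([\tilde{\pi}^\flat]\cdot (1+\mu)^{c(\sigma)}\big)-E_{\tilde{\pi}}([\tilde{\pi}^\flat]),
\]
and the task becomes computing $\theta'(\alpha)$.

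To pin down $\theta'(\alpha)$ it suffices to compute the right-hand side modulo $J^{\prime 2}$. Since $E_{\tilde{\pi}}$ is a polynomial and $[\tilde{\pi}^\flat]\cdot((1+\mu)^{c(\sigma)}-1)\in J'$ (because $(1+\mu)^{c(\sigma)}-1\in J'$), the algebraic Taylor expansion of $E_{\tilde{\pi}}$ around $[\tilde{\pi}^\flat]$ gives
\[
  E_{\tilde{\pi}}\!\big([\tilde{\pi}^\flat]\cdot (1+\mu)^{c(\sigma)}\big)-E_{\tilde{\pi}}([\tilde{\pi}^\flat])\ \equiv\ E'_{\tilde{\pi}}([\tilde{\pi}^\flat])\cdot [\tilde{\pi}^\flat]\cdot\big((1+\mu)^{c(\sigma)}-1\big)\pmod{J^{\prime 2}},
\]
with all higher-order terms lying in $J^{\prime 2}$. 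Next, expanding the binomial series
\[
  (1+\mu)^{c(\sigma)}-1\ =\ c(\sigma)\,\mu+\binom{c(\sigma)}{2}\mu^2+\ldots\ =\ \mu\cdot\big(c(\sigma)+O(\mu)\big),
\]
and using that $\mu\in J'$, this reduces to $c(\sigma)\cdot\mu$ modulo $J^{\prime 2}$.

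Dividing the congruence by the generator $E_{\tilde{\pi}}([\tilde{\pi}^\flat])$ of $J'$ (which is permissible modulo $J'$) and applying $\theta'$, one obtains
\[
  \theta'(\alpha)\ =\ E'_{\tilde{\pi}}(\tilde{\pi})\cdot \tilde{\pi}\cdot c(\sigma)\cdot \theta'\!\left(\frac{\mu}{E_{\tilde{\pi}}([\tilde{\pi}^\flat])}\right)\ =\ \tilde{e}\cdot c(\sigma)\cdot z,
\]
where the last equality is the definition of $\tilde{e}$ and $z$. Adding $1$ yields the claimed formula $1+\tilde{e}\cdot z\cdot c(\sigma)$. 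The only mild subtlety will be justifying the division by $E_{\tilde{\pi}}([\tilde{\pi}^\flat])$ as a generator of $J'$ and ensuring that the truncations modulo $J^{\prime 2}$ are handled cleanly; the rest is a one-step algebraic expansion. I do not anticipate a serious obstacle, as this is essentially a linearization of $E_{\tilde{\pi}}$ along the principal generator of the kernel of $\theta'$.
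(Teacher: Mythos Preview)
Your argument is correct and follows essentially the same route as the paper: substitute $\sigma([\pi^\flat])=(1+\mu)^{c(\sigma)}[\tilde{\pi}^\flat]$, linearize $E_{\tilde{\pi}}$ via a first-order Taylor expansion, expand $(1+\mu)^{c(\sigma)}-1$ to first order in $\mu$, divide by the generator $E_{\tilde{\pi}}([\tilde{\pi}^\flat])$ of $J'$, and apply $\theta'$. The only cosmetic difference is that the paper controls the error terms modulo $\mu^2$ rather than $J'^{2}$; since $\mu\in J'$ these are equivalent for the purpose of computing $\theta'$ of the quotient.
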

\begin{proof} 
  We write
  \[
    \sigma([\pi^\flat])=[\varepsilon]^{c(\sigma)}[\tilde{\pi}^\flat]
  \]
  and
  \[
    \mu:=[\varepsilon]-1.
  \]
  Recall that $\ker(A^\prime_\inf\to W(\mathcal{O}_{C^\prime}))=\mu\cdot A^\prime_{\inf}$, cf.\ \cite[Lemma 3.23]{Bhatt2018}.
  Now
  \[
    \begin{array}{r l}
     E_{\tilde{\pi}}(\sigma([\pi^\flat]))  = & E_{\tilde{\pi}}([\tilde{\pi}^\flat](1+\mu)^{c(\sigma)}) \\
      = & E_{\tilde{\pi}}([\tilde{\pi}^\flat](1+c(\sigma)\mu))+x\cdot \mu^2 \\
      = & E_{\tilde{\pi}}([\tilde{\pi}^\flat])+E^\prime_{\tilde{\pi}}([\tilde{\pi}^\flat])[\tilde{\pi}^\flat]c(\sigma)\mu+y\cdot \mu^2
    \end{array}
  \]
  for some $x,y\in A^\prime_\inf$. 
As $\mu$ is divisible by $E_{\tilde{\pi}}([\tilde{\pi}^\flat])$, we can divide this equation by $E_{\tilde{\pi}}([\tilde{\pi}^\flat])$ and then apply $\theta^\prime$. 
We conclude that
  \[
    \theta^\prime(\frac{E_{\tilde{\pi}}(\sigma([\pi^\flat]))}{E_{\tilde{\pi}}([\tilde{\pi}^\flat])})=1+E^\prime_{\tilde{\pi}}(\tilde{\pi})\theta^\prime(\frac{\mu}{E_{\tilde{\pi}}([\tilde{\pi}^\flat])})\tilde{\pi}c(\sigma)
  \]
  because $\theta^\prime(\mu)=0$. 
By definition of $z$ we can conclude the lemma.
\end{proof}

We now specialize to the case that $K=K^\prime, C=C^\prime=\widehat{\overline{K}}$ and $\tau=\mathrm{Id}_{\mathrm{K}}$. 
In this case, let
\[
  G_K:=\Gal(\overline{K}/K)
\]
be the Galois group of $K$, which acts on $C$ by continuous automorphisms. 
As the morphism
\[
  \tilde{f}\colon \Spf(\mathcal{O}_C)\to \mathrm{Spf}(\mathcal{O}_K)^{\rm HT}
\]
is $G_K$-equivariant (by naturality of the Hodge--Tate stack), the $G_\pi$-torsor 
\[
  Z_\pi\to \Spf(\mathcal{O}_C)
\]
aquires an action of $G_K$. 
From \Cref{sec:galo-acti-cart-1-description-of-sigma-z} we can deduce an explicit description of this action. 
To formulate it, let us first make a definition.

\begin{definition}
  \label{sec:expl-funct-chi-pi-flat}
  We set
  \[
    \chi_{\pi^\flat}\colon G_K\to \mathbb G_m^\sharp(\mathcal{O}_C)=1+p^\alpha\cdot \O_C , \ \sigma\mapsto 1+e \cdot z\cdot c(\sigma)=\theta\left(\frac{\sigma(E_\pi([\pi^\flat]))}{E_\pi([\pi^\flat])}\right),
  \]
  (here $\alpha=\frac{1}{p-1}$ if $p\neq 2$ and $\alpha=1/2$ if $p=2$ defines the convergence radius for the exponential) where the last equality follows from \Cref{sec:galo-acti-cart-2-description-of-quotient-of-eisenstein-polynomials-of-uniformizers}.
\end{definition}

We recall that the choice $\pi^\flat$ of $p$-power compatible roots of $\pi$ yields an isomorphism
\[
  \gamma_{\pi^\flat}\colon G_{\pi,\mathcal{O}_C}\cong Z_\pi.
\]

As a corollary of the previous proposition, we obtain:

\begin{corollary}
  \label{sec:galo-acti-cart-1-galois-action-on-a-en-0}
  The isomorphism $\gamma_{\pi^\flat}\colon G_{\pi,\mathcal{O}_C}\to Z_\pi$ is equivariant for the $G_K$-action if $\sigma\in G_K$ acts on $G_{\pi,\mathcal{O}_C}$ by
  \[
    G_{\pi,\mathcal{O}_C}\xrightarrow{\cdot d_\sigma} G_{\pi,\mathcal{O}_C}\xrightarrow{\mathrm{Id}_{G_{\pi}}\times \sigma} G_{\pi,\mathcal{O}_C},
  \]
  where $d_\sigma:=(\chi_{\pi^\flat}(\sigma),z\cdot c(\sigma))^{-1}$.
\end{corollary}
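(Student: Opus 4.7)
The plan is to specialize Proposition \ref{sec:galo-acti-cart-1-description-of-sigma-z} to the self-map setting of the corollary---namely $K=K'$, $C=C'$, $\tau=\Id_K$, $\tilde{\pi}=\pi$, and $\tilde{\pi}^\flat=\pi^\flat$---and then to match the resulting explicit formula with multiplication by $d_\sigma$ on $G_{\pi,\mathcal{O}_C}$. In this setting $\tilde{e}=e$, and the proposition's formula specializes to
\[
\sigma_G(t,a)=\bigl(\chi_{\pi^\flat}(\sigma)^{-1}\cdot t,\;\chi_{\pi^\flat}(\sigma)^{-1}(a - z\cdot c(\sigma))\bigr),
\]
where the $\mathbb{G}_m^\sharp$-coordinate has been rewritten using the equality $\chi_{\pi^\flat}(\sigma)=1+e\cdot z\cdot c(\sigma)$ from \Cref{sec:expl-funct-chi-pi-flat}.

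Next, I would make explicit the group law on $G_\pi$ inherited from $\mathbb{G}_m^\sharp\ltimes \mathbb{G}_a^\sharp$: under the defining constraint $t=1+ea$ the multiplication takes the form
\[
(t_1,a_1)\cdot(t_2,a_2)=(t_1t_2,\;a_1+a_2+e\cdot a_1a_2),
\]
which is commutative, and the inverse of $(t,a)$ equals $(t^{-1},-a/t)$. In particular one reads off
\[
d_\sigma=(\chi_{\pi^\flat}(\sigma),z\cdot c(\sigma))^{-1}=\bigl(\chi_{\pi^\flat}(\sigma)^{-1},\;-\chi_{\pi^\flat}(\sigma)^{-1} z\cdot c(\sigma)\bigr).
\]

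It then remains to verify that right-multiplication by $d_\sigma$ indeed realizes $\sigma_G$ on $\mathcal{O}_C$-points. The $\mathbb{G}_m^\sharp$-component is immediate, and for the $\mathbb{G}_a^\sharp$-component the equation reduces, after multiplying through by $\chi_{\pi^\flat}(\sigma)$, to $(\chi_{\pi^\flat}(\sigma)-1)\cdot a = e\cdot z\cdot c(\sigma)\cdot a$, which is nothing but the defining relation $\chi_{\pi^\flat}(\sigma)=1+e\cdot z\cdot c(\sigma)$. The only conceptual subtlety---and the one point that needs some care---is the bookkeeping of source versus target: $\sigma_G$ is a morphism $G_{\pi,\mathcal{O}_C}\to \sigma^\ast G_{\pi,\mathcal{O}_C}$ over $\Spf(\mathcal{O}_C)$, whereas the $G_K$-action on $G_{\pi,\mathcal{O}_C}$ must cover $\sigma$ on $\Spf(\mathcal{O}_C)$; converting between the two is exactly what encodes the composition with the tautological base-change map $\Id_{G_\pi}\times \sigma$ appearing in the statement.
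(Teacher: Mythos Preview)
Your proposal is correct and follows the same approach as the paper, which simply says ``This follows by setting $\tilde{\pi}^\flat=\pi^\flat$ in \Cref{sec:galo-acti-cart-1-description-of-sigma-z}, as in this case $\tilde{e}=e$.'' You have filled in the explicit group-law verification that the paper leaves to the reader, and your bookkeeping of the base-change map $\Id_{G_\pi}\times\sigma$ versus $\sigma_G$ is handled correctly.
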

The map $d\colon G_K\to G_{\pi}(\mathcal{O}_C),\ \sigma\mapsto d_\sigma$ is a $1$-cocycle, and we can formulate Corollary~3.4 by saying that the $G_K$-action on $G_{\pi,\mathcal{O}_C}$ induced by the natural action on $Z_\pi$ via transport of structure through $\gamma_{\pi^\flat}\colon G_{\pi,\mathcal{O}_C}\to Z_\pi$ is given by twisting the usual action on coefficients by $d$.
\begin{proof}
  This follows by setting $\tilde{\pi}^\flat=\pi^\flat$ in \Cref{sec:galo-acti-cart-1-description-of-sigma-z}, as in this case $\tilde{e}=e$.
\end{proof}

\begin{remark}
	The object $Z_\pi$ is also implicit in the work of Min-Wang when they consider the coproduct of the prisms $(A_\pi,I_\pi)$ and $A_\inf(\O_C)$ in the prismatic site of $\O_K$, see \cite[ Lemma 2.11 and \S 5]{MinWang22}. But it plays a different role in their work, as they use it to study $\tau$-connections. For us, $\O(Z_\pi)$ will play the role of a period ring in the style of Fontaine, as we explain in the following.
\end{remark}

\subsection{Comparison to Colmez' ring $B_{\Sen}$}
\label{sec:comp-colm-ring-comparison-to-b-sen}
We will continue with the notation from \Cref{sec:expl-funct-explicit-functoriality} in the situation that $K=K^\prime, C=C^\prime=\widehat{\overline{K}}, \tau=\mathrm{Id}_K, \tilde{\pi}^\flat=\pi^\flat$.
Let $A_{\en}:=\O(G_{\pi,\O_C})$ and set $B_{\en}:=A_{\en}[\tfrac{1}{p}]$.

  The map
  \[
    G_{\pi,\O_C}\to \Ga^\sharp,\ (t,a)\to a
  \]
  is an isomorphism of formal schemes. 
In particular, we have an identification
  \[
    A_\en\cong \widehat{\bigoplus\limits_{n\geq 0}} \O_C\cdot \frac{a^n}{n!}.
  \]
  As shown in \Cref{sec:galo-acti-cart-1-galois-action-on-a-en-0}, on the element $t:=1+e\cdot a\in A_\en$ the $G_K$-action is given by
  \[
    \sigma(t)=\chi_{\pi^\flat}(\sigma)^{-1}\cdot t,
  \]
  with $\chi_{\pi^\flat}(\sigma) \in 1+p^\alpha\cdot \O_C$ as defined in \Cref{sec:expl-funct-chi-pi-flat}. 
Hence the $1$-cocycle $\chi_{\pi^\flat}$ plays a similar role in our setting as the cyclotomic character $\chi\colon G_K\to \Z_p^\times$ plays in $p$-adic Hodge theory via the cyclotomic tower (in the sense that on the usual element $t=[\varepsilon]-1$, the action of $G_K$ is via $\chi$).

To make this analogy more precise, let us first introduce a ``cyclotomic''
version of $B_{\en}$ by setting
\[ B_{\en}^{\cycl}=B_{\en,K}^{\cycl}:=\widehat{\bigoplus\limits_{n\geq 0}}\O_C\cdot  \frac{1}{n!}\left(\frac{t'-1}{e}\right)^n[\tfrac{1}{p}].\]
We note that this Banach algebra over $C$ depends on $K$, but not on the choice of $\pi$. 
Indeed, the implicit orthonormal basis $(\frac{1}{n!}\left(\frac{t'-1}{e}\right)^n)_{n\in \N}$ depends on $e=E^\prime_\pi(\pi)$, but the ideal $e\cdot \O_K$ is the different of $K$ over $W(k)[1/p]$ and thus independent of $\pi$.

The $C$-Banach algebra $B_{\en}^{\cycl}$ admits a (continuous) Galois action by $G_K$ via \[\sigma(t'):=\chi(\sigma)t',\]
where $\chi$ denotes the cyclotomic character.

We now wish to compare $B_{\en}$ and $B_{\en}^{\cycl}$, so we need to find an appropriate coordinate transformation from $t$ to $t'$ that transforms the $1$-cocycle $\chi_{\pi^\flat}$ to the cyclotomic character. 
For this we use:

\begin{lemma}\label{l:galois-action-on-z}
	For any $\sigma\in G_K$,
	\[\frac{\sigma(z)}{z}=\chi(\sigma)\chi_{\pi^\flat}(\sigma)^{-1}.\]
      \end{lemma}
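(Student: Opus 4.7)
The plan is to transfer the entire computation to $A_\inf$ and apply Fontaine's map $\theta$ only at the very end. In the specialized setting at hand ($K=K'$, $\tilde\pi^\flat=\pi^\flat$, so $\tilde e=e$), unwinding the definition gives $z=\pi\cdot\theta(\alpha)$ for
\[
\alpha:=\mu/E_\pi([\pi^\flat])\in A_\inf;
\]
this ratio is a genuine element of $A_\inf$ because $E_\pi([\pi^\flat])$ is a distinguished generator of $\ker(\theta)$ and $\mu=[\varepsilon]-1\in\ker(\theta)$. A short side computation using $\mu=\xi\cdot\varphi^{-1}(\mu)$ and the fact that $\xi$ and $E_\pi([\pi^\flat])$ differ by a unit of $A_\inf$ identifies $\theta(\alpha)$ with $\zeta_p-1$ up to a unit in $\O_C$; in particular, $\theta(\alpha)$ is a nonzerodivisor, so ratios like $\sigma(z)/z$ make sense in $C^\times$.

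The main intermediate step is to establish the identity
\[
  \frac{\sigma(z)}{z}=\theta\!\left(\frac{\sigma(\alpha)}{\alpha}\right)=\theta\!\left(\frac{\sigma(\mu)}{\mu}\right)\cdot \theta\!\left(\frac{E_\pi([\pi^\flat])}{\sigma(E_\pi([\pi^\flat]))}\right).
\]
The first equality uses that $\pi\in K$ is $G_K$-fixed, that $\theta$ is $G_K$-equivariant, and that $\sigma(\alpha)/\alpha$ itself lies in $A_\inf$. This last assertion reduces to the two divisibilities $\mu\mid\sigma(\mu)$ and $E_\pi([\pi^\flat])\mid\sigma(E_\pi([\pi^\flat]))$ in $A_\inf$. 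The second is automatic: $\sigma$ extends to a ring automorphism of $A_\inf=W(\mathcal{O}_{C}^\flat)$ commuting with $\theta$, and hence permutes generators of $\ker(\theta)$. The first I would obtain from $\sigma([\varepsilon])=[\varepsilon]^{\chi(\sigma)}$ by $p$-adic continuity from the obvious integer-exponent case $[\varepsilon]^c-1=\mu\cdot(1+[\varepsilon]+\cdots+[\varepsilon]^{c-1})$.

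It remains to identify the two factors. The second equals $\chi_{\pi^\flat}(\sigma)^{-1}$ immediately from the definition in \Cref{sec:expl-funct-chi-pi-flat}. For the first, the binomial expansion $(1+\mu)^{\chi(\sigma)}-1=\chi(\sigma)\mu+\binom{\chi(\sigma)}{2}\mu^2+\cdots$ (which converges in $A_\inf$) shows that $\sigma(\mu)/\mu\equiv\chi(\sigma)\pmod{\mu A_\inf}$; applying $\theta$ (which kills $\mu$) then gives $\theta(\sigma(\mu)/\mu)=\chi(\sigma)$. Multiplying the two contributions yields the desired identity $\sigma(z)/z=\chi(\sigma)\chi_{\pi^\flat}(\sigma)^{-1}$. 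The only point needing genuine (but standard) care is the convergence of the binomial series and the companion divisibility $\mu\mid\sigma(\mu)$ for $\chi(\sigma)\in\Z_p^\times$ rather than $\Z$; this is handled by $p$-adic continuity together with the closedness of $\mu A_\inf$ in the $(p,\mu)$-adic topology on $A_\inf$.
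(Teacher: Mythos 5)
Your argument is correct and is essentially the same as the paper's: both proofs reduce to $z_0=\theta(\mu/E_\pi([\pi^\flat]))$ using that $\pi$ is $G_K$-fixed, telescope $\theta(\sigma(\alpha)/\alpha)\cdot\theta(\sigma(E_\pi([\pi^\flat]))/E_\pi([\pi^\flat]))$ into $\theta(\sigma(\mu)/\mu)$, and identify the latter with $\chi(\sigma)$. You spell out the integrality and convergence checks (that $\sigma(\alpha)/\alpha\in A_\inf$, the binomial expansion of $(1+\mu)^{\chi(\sigma)}$) which the paper leaves implicit, but this is the same proof.
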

      Here, the element $z=\pi\theta(\frac{\mu}{E_\pi([\pi^\flat])})$ was introduced before~\Cref{sec:galo-acti-cart-1-description-of-sigma-z}.
\begin{proof}
	Since $\pi$ is fixed by the action of $\sigma$, we have $\sigma(z)/z=\sigma(z_0)/z_0$ for $z_0:=\theta(\frac{\mu}{E_{\pi}([\pi^\flat])})$. 
Observe now that
	\[ \frac{\sigma(z)}{z}\cdot \chi_{\pi^\flat}(\sigma)=\theta\left(\frac{\sigma(\frac{\mu}{E_{\pi}([\pi^\flat])})}{\frac{\mu}{E_{\pi}([\pi^\flat])}}\right)\cdot \theta\left(\frac{\sigma({E_{\pi}([\pi^\flat])})}{E_{\pi}([\pi^\flat])}\right)=\theta\left(\frac{\sigma(\mu)}{\mu}\right)=\chi(\sigma)\]
        as desired.
\end{proof}
Focusing just on the Galois action, it therefore looks promising to send
\[ t=1+e\cdot a\mapsto \frac{z}{t'}\]
since \[\sigma(\frac{z}{t'})=\frac{z\chi(\sigma)\chi_{\pi^\flat}(\sigma)^{-1}}{\chi(\sigma)t'}=\chi_{\pi^\flat}(\sigma)^{-1}\frac{z}{t'}\]
transforms as desired to get Galois equivariance of the transformation. 
However, this is problematic due to convergence issues: in order for this rule to extend to a morphism
\[  B_{\en}\rightarrow B^{\cycl}_{\en},\]
the sum 
\[\sum \frac{1}{n!}\left(\frac{\frac{z}{t'}-1}{e}\right)^n\]
would have to converge in  $ B^{\cycl}_{\en}$, or in other words we would have to have \[\frac{z}{t'}\in 1+e\cdot p^{\alpha}\O_C\subseteq 1+p^\alpha \mathcal{O}_C\]
for all $t'\in \mathbb G_m^\sharp(\O_C)=1+p^{\alpha}\O_C$ where as before, $\alpha$ is the convergence radius of the exponential ($\alpha=\frac{1}{p-1}$ or $\alpha=\frac{1}{2}$ if $p=2$). 
This is equivalent to requiring
\[ z\in 1+p^\alpha\O_C,\]
which is not necessarily true: indeed, specializing $z$ along the map $\O_C\to \overline{k}$, we can see that $z$ need not lie in $1+\mathfrak{m}_C$ if the constant term of $E_\pi$ is not $p$.

This raises the question whether one can always modify $z$ so that it is of the desired form: Suppose that we have a different $z'\in C^\times$ for which $\sigma(z')/z'=\chi(\sigma)\chi_{\pi^\flat}(\sigma)^{-1}$ as in \Cref{l:galois-action-on-z}, but also $z'\in 1+ep^\alpha\O_C$. 
Then we have $\sigma(z/z')=z/z'$, so $z'=b z$ for some $b\in K^\times$. 
This shows that $z'$ is of the desired form if and only if 
\[ z'\in (1+ep^{\alpha}\O_C)\cap (z\cdot K^\times).\]
Such an element might not exist, but  due to the fact that $\overline{K}^\times\subseteq C^\times$ is dense, we can always find such a $z'$ after replacing $K$ by a finite extension $K'$. 
Crucially, however, there is in this case not a unique element $z'$, but rather there is an ambiguity by a factor $1+ep^\alpha\O_C\cap K'$. 
In summary:

\begin{proposition}\label{p:comparison-B_en-B_Sen}
	There exists  $z'\in C^\times$ such that there is an $\O_C$-linear continuous isomorphism
	\[ B_{\en}\xrightarrow{\sim}B_{\en}^{\cycl},\quad t=1+e\cdot a\mapsto \frac{z'}{t'}\]
	which is $G_{K'}$-equivariant for some finite extension $K'|K$.
	The element $z'$ is in general non-canonical and only well-defined up to a factor in $1+e\cdot p^{\alpha}\O_{K'}$.
      \end{proposition}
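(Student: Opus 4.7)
The plan is to realize $\phi: B_\en \to B_\en^{\cycl}$ as the unique continuous $C$-algebra map sending $t = 1 + ea$ to $z'/t'$, for a carefully chosen $z' \in C^\times$. For this to produce a $G_{K'}$-equivariant isomorphism, two conditions on $z'$ are needed: the cocycle identity $\sigma(z')/z' = \chi(\sigma)\chi_{\pi^\flat}(\sigma)^{-1}$ for all $\sigma \in G_{K'}$ (matching the Galois actions $\sigma(t) = \chi_{\pi^\flat}(\sigma)^{-1} t$ and $\sigma(t') = \chi(\sigma) t'$), and the smallness/integrality condition $z' \in 1 + e \cdot p^\alpha \O_C$ (ensuring that the divided-power expansion of $z'/t'$ converges in $B_\en^{\cycl}$).

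To construct $z'$, I would start with the element $z$, which by \Cref{l:galois-action-on-z} already satisfies the cocycle identity on $G_K$. Any modification $z' := bz$ with $b \in (K')^\times$ for a finite extension $K'|K$ will still satisfy it on $G_{K'}$. Since $\overline{K}^\times$ is dense in $C^\times$, the open neighborhood $z^{-1}(1 + e\cdot p^\alpha \O_C) \subset C^\times$ contains some $b \in \overline{K}^\times$; setting $K' := K(b)$ and $z' := bz$ then gives an element of the required form.

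Having fixed $z'$, set $u := (z'-1)/e \in p^\alpha \O_C$ and $s := (t'-1)/e \in B_\en^{\cycl}$, so that the rule $t \mapsto z'/t'$ becomes $a \mapsto y := (u-s)/t'$. The map $\phi$ extends uniquely to all of $B_\en$ once one verifies that $(y^n/n!)_n$ is a bounded sequence in $B_\en^{\cycl}$; then $\sum c_n y^n/n!$ converges for any $c_n \to 0$. Expanding $(u-s)^n/n! = \sum_{k=0}^n \tfrac{(-1)^k u^{n-k}}{(n-k)!} \cdot s^k/k!$ in the orthonormal basis $\{s^k/k!\}$ of $B_\en^{\cycl}$, the coefficients $u^{n-k}/(n-k)!$ are integral by the choice of $\alpha$ as the convergence radius of the exponential. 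Since $1/t' = \sum_k (-e)^k k! \cdot s^k/k!$ also has integral orthonormal-basis expansion, this places $y^n/n!$ in $A_\en^{\cycl}$. Galois equivariance is then a direct verification on the generator $t$: both $\sigma\phi(t)$ and $\phi(\sigma t)$ evaluate to $\chi_{\pi^\flat}(\sigma)^{-1} z'/t'$ by the cocycle property of $z'$.

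The symmetric construction with the roles of $(t,a)$ and $(t',s)$ exchanged yields an inverse $\psi: B_\en^{\cycl} \to B_\en$, $t' \mapsto z'/t$, whose well-definedness follows from the same valuation estimate; the identities $\phi\psi(t') = z'/(z'/t') = t'$ and $\psi\phi(t) = t$ show that $\phi$ is an isomorphism. For uniqueness, note that $1 + e\cdot p^\alpha \O_C$ is a multiplicative subgroup of $C^\times$, so if $z'_1, z'_2$ both satisfy our two conditions then $z'_1/z'_2$ is a $G_{K'}$-fixed element of $1 + e\cdot p^\alpha \O_C$, hence lies in $1 + e\cdot p^\alpha \O_{K'}$. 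The main technical obstacle I anticipate is the convergence verification for $(y^n/n!)_n$, which rests on a careful $p$-adic valuation computation tied to the definition of $\alpha$ (and may require separate bookkeeping when $p = 2$).
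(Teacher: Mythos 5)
Your proposal is correct and takes essentially the same route as the paper, whose formal proof is the single line ``This follows from our above discussion''; your write-up is a careful expansion of that discussion, reconstructing the same two conditions on $z'$ (the $1$-cocycle identity from \Cref{l:galois-action-on-z} and the integrality condition $z'\in 1+e\,p^{\alpha}\O_C$), using the same density argument to produce $z'=bz$ with $b\in\overline{K}^{\times}$, and proving uniqueness up to $1+e p^{\alpha}\O_{K'}$ by the same reasoning. The additional content in your version --- the explicit divided-power convergence estimate for $y=(u-s)/t'$ and the exhibition of the inverse $t'\mapsto z'/t$ --- is material the paper leaves implicit, and is sound.
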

      \begin{proof}
        This follows from our above discussion.
      \end{proof}
\begin{remark}
There is a slightly different perspective on this which also allows us to make $K'$ slightly more precise: Consider $\chi\cdot \chi_{\pi^\flat}^{-1}:G_K\to C^\times$ as a continuous $1$-cocycle which defines a trivial class in $H^1_{}(G_K,C^\times)$ by \Cref{l:galois-action-on-z}. 
For any $k\in \N$, there is $m\in \N$ (by continuity) such that for $K':=K(\pi^{1/p^m},\zeta_{p^m})$, this cocycle restricts on $G_{K'}$ to a cocycle with image in $1+p^{k}\O_C$. 
Consider the commutative diagram
\[\begin{tikzcd}
	{H^1_{}(G_{K'},1+p^{k+1}\O_C)} \arrow[d, "\frac{1}{p^{k+1}}\log","\sim"'] \arrow[r] & {H^1_{}(G_{K'},1+p^{k}\O_C)} \arrow[d, "\frac{1}{p^k}\log","\sim"'] \arrow[r] & {H^1_{}(G_{K'},C^\times)} \arrow[d, "\log",hook'] \\
	{H^1_{}(G_{K'},\O_C)} \arrow[r, "\cdot p"] & {H^1_{}(G_{K'},\O_C)} \arrow[r, "\cdot p^k"] & {H^1_{}(G_{K'},C)}
      \end{tikzcd}\]
as we will recall in \Cref{p:Tate-thm-integral-version} below that the torsion in $H^1_{}(G_{K'},\O_C)$ is uniformly bounded among all finite extensions $K'$ of $K$. 
Since $\chi\cdot \chi_{\pi^\flat}^{-1}$ defines an element in the top left that vanishes in the top right,  it follows that for $k\gg 0$, the class of $\chi\cdot \chi_{\pi^\flat}^{-1}$ in $H^1_{}(G_K,1+p^{k}\O_C)$ vanishes. 
Thus there exists $z'\in K'$ whose coboundary equals $\chi\cdot \chi_{\pi^\flat}^{-1}$.

This shows that we can in fact choose $K'$ to be $K(\pi^{1/p^m},\zeta_{p^m})$ for some $m\gg 0$. 
From this perspective, the reason that $z'$ is non-canonical is that we can only prove the vanishing of a class in group cohomology, but we did not produce a canonical coboundary representing it.
\end{remark}

To reformulate Sen theory in the style of Fontaine, Colmez, \cite{colmez1994resultat}, has introduced the period ring $B_{\Sen}$. 
As a ring, $B_\Sen$ identifies with the ring $C\{\!\{u\}\!\}$  of power series in $u$ with coefficients in $C$ having a positive radius of convergence\footnote{Note that $u\in B_\Sen$ is not related to the element $u\in A_\pi$ from \Cref{sec:expl-funct-explicit-functoriality}.}. 
It is filtered by the subrings $B_\Sen^n$, $n\geq 0$, defined as the power series with radius of convergence $\geq p^{-\max(1,n)}$ for $p$ odd and $p^{-\max(2,n)}$ for $p=2$. 
If for $\sigma \in G_K$ we set
\[ \sigma(u) = u+ \log(\chi(\sigma)),
\]
one checks that this defines an action of $G_{K_n}$ on $B_\Sen^n$, where $K_n=K(\mu_{p^n})$. 
Note that there is an embedding 
\begin{equation}\label{eq:emb-B_en->B_Sen}
B_{\en}^{\cycl}\hookrightarrow B_{\Sen},\quad t'\mapsto \exp(u).
\end{equation}
which factors through $B^1_{\Sen}$ and is Galois equivariant with respect to the action on $B^1_{\Sen}$.

Therefore, as a consequence of \Cref{p:comparison-B_en-B_Sen}, we deduce:

\begin{corollary}\label{p:comparison-B_en-B_Sen-bis}
	There exists  $z'\in C^\times$ such that there is an $\O_C$-linear continuous embedding
	\[ B_{\en} \hookrightarrow B_{\Sen},\quad t=1+e\cdot a\mapsto \frac{z'}{\exp(u)}\]
	that factors through $B^1_{\Sen}$ and is $G_{K'}$-equivariant for some finite extension $K'|K$.
	The element $z'$ is in general non-canonical and only well-defined up to a factor in $1+e\cdot p^{\alpha}\O_{K'}$.
      \end{corollary}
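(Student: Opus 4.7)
The plan is simply to compose the two maps already constructed in the preceding discussion. First I would apply \Cref{p:comparison-B_en-B_Sen} to obtain a finite extension $K'|K$ and an element $z'\in C^\times$ (well defined up to a factor in $1+e\cdot p^\alpha\O_{K'}$) such that the rule $t\mapsto z'/t'$ extends to an $\O_C$-linear continuous $G_{K'}$-equivariant isomorphism $B_{\en}\xrightarrow{\sim} B^{\cycl}_{\en,K}$. Then I would compose this with the embedding $B^{\cycl}_{\en,K}\hookrightarrow B_\Sen$ of \eqref{eq:emb-B_en->B_Sen} sending $t'\mapsto \exp(u)$; this embedding factors through $B^1_\Sen$ and is equivariant for the $G_{K_1}$-action on $B^1_\Sen$ (where $K_1=K(\mu_p)$). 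The composition then sends $t\mapsto z'/\exp(u)$, is $\O_C$-linear and continuous, factors through $B^1_\Sen$, and is $G_{K''}$-equivariant for $K''=K'\cdot K_1$, which we relabel $K'$.

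The only small point to verify is that composition with an embedding preserves the $\O_C$-linear continuous structure and that the Galois-equivariance statements merge correctly: the map of \Cref{p:comparison-B_en-B_Sen} is equivariant for $G_{K'}$ and the embedding \eqref{eq:emb-B_en->B_Sen} is equivariant for $G_{K_n}$ on $B^n_\Sen$; since the image of the first map sits inside $B^1_\Sen$ (because $\exp(u)$ converges there), taking $K'$ large enough to contain $K_1$ makes the composite equivariant for the full $G_{K'}$. Injectivity is automatic since the first map is an isomorphism and the second is an embedding. The ambiguity in $z'$ is inherited verbatim from \Cref{p:comparison-B_en-B_Sen}: replacing $z'$ by $(1+e\cdot p^\alpha\cdot\lambda)z'$ with $\lambda\in \O_{K'}$ yields another valid choice, and conversely two valid choices differ by an element of $K'^\times$ that lies in $1+e\cdot p^\alpha\O_C$, hence in $1+e\cdot p^\alpha\O_{K'}$.

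There is essentially no obstacle; the content was already packaged into \Cref{p:comparison-B_en-B_Sen} and \eqref{eq:emb-B_en->B_Sen}, and this corollary is just the formal concatenation.
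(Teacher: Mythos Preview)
Your proposal is correct and matches the paper's own argument exactly: the paper simply states that the corollary follows by composing the isomorphism of \Cref{p:comparison-B_en-B_Sen} with the embedding \eqref{eq:emb-B_en->B_Sen}, which is precisely what you do. Your extra care in enlarging $K'$ to contain $K_1=K(\mu_p)$ so that the $G_{K'}$-action on $B^1_\Sen$ is defined is a detail the paper leaves implicit.
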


\begin{remark}
	The subscript ``$\en$'' in $B_{\en}$ stands for ``enhanced'', the motivation of which is two-fold: First, the notation is to suggest that $B_{\en}$ is closely related to $B_{\Sen}$, but has a finer convergence condition. 
As we will explain in \Cref{eq:Galois-cohom-Ben-vs-BSen} below, this makes $B_{\en}$ better behaved with respect to Galois cohomology.
	Second, the ring $B_{\en}$ has a natural generalisation for $X^{\rm HT}$ when $X$ is a smooth formal scheme over $\O_K$, and in this case it encodes a close relation with the functor from v-vector bundles to the ``enhanced Higgs modules'' of  Min--Wang \cite{MinWang22}, which are roughly a hybrid of Sen modules and Higgs modules.
\end{remark}

We end this subsection with a slightly different viewpoint on the ring $B_\en$.
As we saw above, the $G_K$-action on
\[
  t^\prime:=\frac{z}{t}
\]
is via the cyclotomic character. 
In particular, Fontaine's period ring
\[
  B_{\mathrm{HT}}:=C[(t^\prime)^{\pm 1}]\cong \bigoplus\limits_{n\in \Z} C(n)
\]
embeds as a subring into $B_\en$. 
In fact, $B_\en$ is the completion of $B_{\mathrm{HT}}$ for the Banach norm induced by the $\mathcal{O}_C$-lattice $A_\en\cap B_{\mathrm{HT}}$, and this sublattice can be made rather explicit.

Let us give an explicit consequence of this completion.

\begin{example}
  \label{sec:c_k-semil-galo-2-example-for-non-split-extension} The class of continuous semilinear $C$-representations $V$ of $G_K$, which satisfy
  \[
  \dim_K (B_\en \otimes_C V)^{G_K} = \dim_C V
  \]
  is different from the class of Hodge--Tate representations. 
As a concrete example, let $M$ be a free rank $2$ vector space over $K$ and equip $M$ with the Sen operator \[
   \Theta_{\pi,M}:=
  \begin{pmatrix}
    0 & -1 \\
    0 & 0
  \end{pmatrix}
\]
for some choice of basis. As we explain in detail in \S4, the $G_K$-representation associated to $(M,\Theta_\pi)$ via $\alst_K \circ\beta_\pi^{-1}$ is explicitly given by $V:=(M\otimes_{\mathcal{O}_K}B_{\en})^{\Theta_\pi+\Theta_{\pi,M}=0}$.
  Firstly, the constants $C$ lie in $V$. 
To get the whole of $V$ we have to solve the differential equation
  \[
    (1+ea)\frac{\partial}{\partial a}(f(a))=1,
  \]
  which yields that $f(a)=\frac{1}{e}\log(1+ea)+c$ for some constant $c\in C$, and thus $V=\langle 1, \log(1+ea)\rangle_C$.
  In particular, $V$ is $2$-dimensional and verifies $\dim_K (B_\en \otimes_C V)^{G_K}=2$. But $V$ is not Hodge--Tate as the Sen operator is not semi-simple.
\end{example}

\subsection{Calculation of Galois cohomology of $B_\en$}
\label{sec:calc-galo-cohom}

We continue to use the notation of \Cref{sec:comp-colm-ring-comparison-to-b-sen}. 
The following theorem is the crucial input in our proof of the full faithfulness part of \Cref{main-theorem-introduction}.2 in \Cref{sec:c_k-semil-galo-1-pullback-fully-faithful-up-to-quasi-isogeny} below.

\begin{theorem}\label{t:cohom-B_en}
  \label{sec:c_k-semil-galo-3-continuous-galois-cohomology-of-b-en-0}
  The natural map
  \[
    K\to R\Gamma(G_K, B_\en)
  \]
  is an isomorphism.
\end{theorem}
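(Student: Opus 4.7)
The plan is to compute $R\Gamma(G_K, B_\en)$ via a $G_K$-stable increasing filtration of $B_\en$ whose graded pieces are Tate twists $C(-k)$, so that the computation reduces to Tate's classical theorem on $R\Gamma(G_K, C(n))$.

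First I would introduce the filtration. By \Cref{sec:galo-acti-cart-1-galois-action-on-a-en-0} the $G_K$-action on the generator $t = 1 + ea$ is by $\sigma(t) = \chi_{\pi^\flat}(\sigma)^{-1} t$, so
\[
\sigma(a) = \frac{\chi_{\pi^\flat}(\sigma)^{-1} - 1}{e} + \chi_{\pi^\flat}(\sigma)^{-1} a.
\]
An easy induction shows that the finite-dimensional $C$-subspace $F^{\leq k} \subset B_\en$ spanned by $\{a^n/n! : 0 \leq n \leq k\}$ is $G_K$-stable, and that on the one-dimensional quotient $F^{\leq k}/F^{\leq k-1}$ the group $G_K$ acts through the character $\chi_{\pi^\flat}^{-k}$.

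Next I would identify each graded piece with a Tate twist. By \Cref{l:galois-action-on-z} the element $z \in C^\times$ satisfies $\sigma(z)/z = \chi(\sigma) \cdot \chi_{\pi^\flat}(\sigma)^{-1}$, which means that multiplication by $z^k$ furnishes a $G_K$-equivariant isomorphism $F^{\leq k}/F^{\leq k-1} \xrightarrow{\sim} C(-k)$. Tate's theorem then gives $R\Gamma(G_K, C(-k)) = 0$ for $k \geq 1$ and $R\Gamma(G_K, C) = K$ for $k = 0$. A dévissage along the short exact sequences $0 \to F^{\leq k-1} \to F^{\leq k} \to F^{\leq k}/F^{\leq k-1} \to 0$ then gives $R\Gamma(G_K, F^{\leq k}) = K$ for every $k \geq 0$, with the identification induced by the inclusion of constants $C \hookrightarrow F^{\leq k}$.

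The remaining step is to pass from the polynomial subspace $\bigcup_k F^{\leq k}$ to its $p$-adic completion $B_\en$. I would work integrally: after restricting to a suitable open subgroup $G_{K'} \subseteq G_K$ (so that the values of $\chi_{\pi^\flat}$ are $p$-adically close enough to $1$), the induced filtration $F^{\leq k} \cap A_\en$ lives inside $A_\en$, and the gradeds are $\O_C(-k)$ up to a coboundary. Reducing modulo $p^N$ turns $A_\en/p^N$ into a filtered colimit of the $F^{\leq k}/p^N$, for which continuous cohomology commutes with the filtered colimit. Tate's effective bounds —~asserting that $H^i_{\cts}(G_{K'}, \O_C(n))$ has $p$-torsion bounded uniformly in $n \neq 0$~— then imply that the higher cohomology of the inverse limit $A_\en = \varprojlim_N A_\en/p^N$ is entirely bounded torsion outside the constants, so that inverting $p$ yields $R\Gamma(G_{K'}, B_\en) = K'$. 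The conclusion for $G_K$ follows from Hochschild--Serre for the finite extension $K'/K$ together with the vanishing of finite-group cohomology on $\Q_p$-vector spaces.

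The main obstacle is the completion step: while the vanishing of $R\Gamma(G_K, C(n))$ for $n \neq 0$ is classical, ensuring that the cohomology of the $p$-adic completion of an infinite tower of such twists still vanishes requires the integral uniform torsion bounds from Tate--Sen theory, and some care in commuting $R\Gamma(G_{K'}, -)$ with the relevant (co)limits. Once these standard estimates are in place, the result falls out from Tate's theorem applied to each graded piece.
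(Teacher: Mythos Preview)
Your filtration idea is natural and indeed appears implicitly in the paper's proof (the ``upper triangular block matrix'' argument there is exactly your $F^{\leq k}$-filtration). However, there is a genuine gap at the heart of your argument: you write ``$R\Gamma(G_K, C) = K$ for $k = 0$'', but Tate's theorem says
\[
R\Gamma(G_K, C) \;=\; K \oplus K[-1],
\]
so the $k=0$ graded piece contributes a copy of $K$ in degree $1$. Your d\'evissage then only gives $R\Gamma(G_K, F^{\leq k}) = K \oplus K[-1]$ for every $k$, and the uniform torsion bounds you invoke for $\O_C(n)$ apply only for $n \neq 0$, so they say nothing about this $H^1$. In other words, looking only at the graded pieces cannot possibly show $H^1(G_K, B_\en)=0$: the vanishing is a genuine feature of the \emph{completion} that is invisible on associated gradeds.

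Concretely, what kills the $H^1$ class of $C$ inside $B_\en$ is the element
\[
\log(t)=\sum_{n\geq 1}(-1)^{n-1}\frac{(ea)^n}{n}\in B_\en,
\]
which satisfies $\sigma(\log t)-\log t \in C$ and hence provides a coboundary trivialising the generator of $H^1(G_K,C)$. This element is not polynomial in $a$, so it does not live in any $F^{\leq k}$; equivalently, the paper shows $(B_\en/C)^{G_K}\cong K$ even though every graded piece $C(-k)$, $k\geq 1$, has no invariants. The paper then has to work harder to show $H^i(G_K,B_\en/C)=0$ for $i\geq 1$, using an explicit bounded inverse for the upper triangular operator $g-1$ on $\widehat{\bigoplus}_{n\geq 1} K_\infty\cdot \tfrac{a^n}{n!}$ (this is where the uniform bounds on $(g\chi(g)^n-1)^{-1}$ genuinely enter, together with smallness of the off-diagonal terms). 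Your completion sketch would need to produce this cancellation, and as written it does not: the claim that ``higher cohomology of $A_\en$ is bounded torsion outside the constants'' still leaves the $H^1$ of the constants untouched, and a simple limit of the $R\Gamma(G_K,F^{\leq k})$ yields $K\oplus K[-1]$, not $K$.
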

Here, the right hand side is continuous group cohomology for the Banach space topology on $B_\en$. 
Before starting the proof, let us recall Tate's celebrated theorem (\cite[Proposition 8(a)]{tate1967p}) which will be used in the proof:
\[
R\Gamma(G_K,C)=K \oplus K[-1].
\]
We will also need the following result, which yields  a more precise version of the result that $H^1(G_K,C(n))=0$ for $n\neq 0$: For the formulation,	let $K_\infty\subseteq C$ be the completed $p$-power cyclotomic extension of $K$ and let $\Gamma:=\Gal(K_\infty|K)$. We fix a topological generator $g\in \Gamma$.
  \begin{lemma}[{\cite[Proposition 7(c) and its proof]{tate1967p}}]\label{p:Tate-thm-integral-version} 
  	There is a functorial $\Gamma$-equivariant $K$-linear decomposition $K_\infty=K\oplus X$ in $K$-Banach spaces such that $X$ has the following property:
  	For any $0\neq n\in \Z$, the $K$-linear operator $g \chi(g)^n-1:X\to X$ has a bounded inverse $\rho_n$. 
More precisely, there is $\delta>0$ such that $|\rho_n(x)|\leq p^{\delta}|x|$ for each $x\in X$. 
This $\delta$ only depends on $K$, and in fact we can choose $\delta$ such that it works uniformly for all finite extensions $K(\mu_{p^m})|K$ for $m\in \N$.
  \end{lemma}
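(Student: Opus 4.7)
The plan is to adapt Tate's classical argument \cite[Prop.~7(c)]{tate1967p} from $\widehat{K_\infty}$ to $C$ and then track the constants carefully for uniformity across the tower $K(\mu_{p^m})|K$. Let me set $K_\infty := K(\mu_{p^\infty})$ and take $g \in G_K$ to be a fixed lift of a topological generator $\gamma$ of the pro-$p$ part of $\Gamma_K := \Gal(K_\infty/K)$; write $T_n := g\chi(g)^n - 1$, which I view as a bounded $K$-linear endomorphism of $C$.

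First, I would apply the Tate normalized trace $t_\infty : C \to \widehat{K_\infty}$, a $G_K$-equivariant continuous projection of bounded norm (the standard Ax--Sen--Tate ingredient), to obtain a $G_K$-stable splitting $C = \widehat{K_\infty} \oplus W$ with $W := \ker(t_\infty)$. Because $T_n$ commutes with $t_\infty$, it preserves this decomposition, so it suffices to invert $T_n$ with a bounded inverse separately on each summand.

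On $\widehat{K_\infty}$, one works as in Tate: for $n \neq 0$ the scalar $\chi(\gamma)^n - 1 \in \Z_p$ is nonzero, and expanding an element of $\widehat{K_\infty}$ in the standard Tate basis coming from the tower $K_m = K(\mu_{p^m})$ allows one to write down a term-by-term inverse whose norm is controlled by $|\chi(\gamma)^n - 1|^{-1}$ and the ramification constant for $K_\infty/K$, giving some bound $p^{\delta_1}$. On $W$, the quantitative Ax--Sen--Tate theorem furnishes a bounded inverse $\rho$ of $g - 1$ with $\|\rho\| \le p^{\delta_2}$ for $\delta_2$ depending only on the ramification of $K_\infty/\Q_p$; since $\chi(g)^n \in \Z_p^{\times}$ is a unit, writing $T_n = \chi(g)^n(g-1) + (\chi(g)^n - 1)\cdot \mathrm{id}$ and applying a Neumann series (the perturbation being a scalar of bounded valuation against an invertible operator with controlled inverse) shows $T_n$ has a bounded inverse on $W$ as well.

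For the uniformity in $m$, the essential observation is that $K(\mu_{p^m})_\infty = K_\infty$, so the splitting $C = \widehat{K_\infty}\oplus W$ and the trace $t_\infty$ do not change when we pass from $G_K$ to $G_{K(\mu_{p^m})}$; the Ax--Sen--Tate constant $\delta_2$ is therefore a uniform invariant of $K_\infty/\Q_p$. On $\widehat{K_\infty}$, a well-chosen generator $g$ can be reused for all $m$ (taking appropriate $p$-power index to land in $\Gamma_{K_m}$), and the resulting constants $\delta_1$ only depend on $v_p(\chi(g)^n-1)$ and the fixed invariants of $K_\infty$. Combining these yields a single $\delta$ that works uniformly in $m$. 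The main obstacle is precisely the careful bookkeeping of the Tate--Sen constants through the tower; this is delicate but classical, and it is exactly the reason one passes through the Ax--Sen--Tate framework rather than arguing only abstractly from the vanishing of $H^1(G_{K'}, C(n))$.
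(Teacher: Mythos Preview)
Your proposal and the paper's proof are doing very different amounts of work. The paper does not reprove anything: it simply observes that
\[
g\chi(g)^n-1=\chi(g)^n\bigl(g-\chi(g)^{-n}\bigr),
\]
notes that $\chi(g)^{-n}\in 1+p\Z_p$, and then cites Tate's Proposition~7(c) verbatim, which already gives a bounded inverse for $g-\lambda$ with $\lambda\in 1+p\Z_p$ together with the required estimate. The uniformity over the tower $K(\mu_{p^m})$ is likewise just a pointer to Tate's remark on p.~172. So the entire ``proof'' is a two-line reduction to the literature.

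You instead sketch the internal mechanism of Tate's argument, which is fine as an exposition but is more than what is asked. Two cautions if you pursue this route. First, the map you call ``the Tate normalized trace $t_\infty:C\to\widehat{K_\infty}$'' is not the classical Tate trace (those go $\widehat{K_\infty}\to K_n$); a $G_K$-equivariant bounded projection $C\to\widehat{K_\infty}$ requires the almost-\'etaleness of $\overline{K}/K_\infty$ and should be justified separately. In fact the lemma is only ever applied in the paper to coefficients in $K_\infty$, so the passage to all of $C$ is not actually needed. Second, your Neumann-series step on $W$ is not automatic: writing $T_n=\chi(g)^n(g-1)+(\chi(g)^n-1)$ and perturbing requires $|\chi(g)^n-1|\cdot\|\rho\|<1$, i.e.\ that the Ax--Sen--Tate constant $\delta_2$ satisfies $\delta_2<1$, which you have not checked. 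Tate's own argument handles $g-\lambda$ directly rather than perturbing from $g-1$, which is why the paper's reduction to $\lambda=\chi(g)^{-n}$ is cleaner than your perturbative approach.
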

\begin{proof}
	Tate proves this for $g-\lambda$ where $\lambda\in 1+p\Z_p$. 
We can take $\lambda = \chi(g)^{-n}$ and then multiply $g-\lambda$ by $\chi(g)^n$ which does not change the estimates. 
The last statement on independence of $K$ is \cite[Remark on page 172]{tate1967p}.
\end{proof}

\begin{proof}[Proof of \Cref{sec:c_k-semil-galo-3-continuous-galois-cohomology-of-b-en-0}]
  First note that it suffices to prove that there exists a finite extension $L/K$ such that
  \[
   K^\prime \cong R\Gamma(G_{K^\prime},B_{\en,K}^{\cycl}) 
 \]
 for all finite extensions $K^\prime$ of $L$ where $B_{\en,K}^\cycl$ is the Banach $C$-algebra from \Cref{sec:comp-colm-ring-comparison-to-b-sen} and with respect to the field $K$.

 Indeed, given this claim we can use \Cref{p:comparison-B_en-B_Sen} to deduce that
 \[
   K^\prime\cong R\Gamma(G_{K^\prime}, B_{\en})
 \]
 for a finite Galois extension $K'$ over $K$. Let $H=\Gal(K'|K)$, then 
applying $R\Gamma(H,-)$ yields the theorem by Shapiro's lemma as $K^\prime$ is an induced $H$-representation (by the normal basis theorem) with invariants $K$.

 To prove the claim we first set up some notation.
 We set $B:=\widehat{\bigoplus\limits_{n\geq 0}} C \frac{a^n}{n!}$ and $A:=\widehat{\bigoplus\limits_{n\geq 0}} \mathcal{O}_C\frac{a^n}{n!}$. 
We fix some $e\in \mathcal{O}_K \backslash \{0\}$ and set $t:=1+e\cdot a$. 
We equip $B$ with the unique continuous Galois action such that $\sigma(t)=\chi(\sigma)t$ for $\sigma\in G_K$ and $\chi\colon G_K\to \Z_p^\times$ the cyclotomic character. 
(Thus, in particular, if $e=E^\prime(\pi)$ up to $\mathcal{O}_K^\times$, then $B\cong B^\cycl_\en$ from \Cref{sec:comp-colm-ring-comparison-to-b-sen}.)
 We can conclude
 \begin{equation}\label{eq:action-of-Gamma-on-a}
   \sigma(a)=\chi(\sigma)\cdot a+\frac{\chi(\sigma)-1}{e}
 \end{equation}
 for $\sigma\in G_K$ as $t=1+e\cdot a$.

	\subsubsection*{Step 1:  Calculation of $H^0(G_K, B)$}
	
	We start by proving that 
	\[B^{G_K} =K.\]
	For this we can use an analog of the embedding \Cref{eq:emb-B_en->B_Sen} and use \cite[Th\'eor\`eme~2.(i)]{colmez1994resultat}.	
	Alternatively, we can argue directly and follow Colmez' argument for $B_{\Sen}$, as we shall now demonstrate. 
In the following we get slightly different series expansions due to the difference in Galois actions on $u$ and $a$.

Since the action of $G_{K_\infty}:=\Gal_{\mathrm{cts}}(C/K_\infty)$ on $a$ is trivial, we see from the definition of $B$ that 
	\[ \left(\widehat \bigoplus_{n\geq 0} C \frac{a^n}{n!}\right)^{G_K}=\left(\widehat \bigoplus_{n\geq 0} K_\infty \frac{a^n}{n!}\right)^{\Gamma}.\] 
	Via the cyclotomic character $\chi:\Gamma\to \Z_p^\times \to \mathcal{O}_K^\times$, we can identify $\Gamma$ with a closed subgroup of $\mathcal{O}_K^\times$. 
Since it suffices to prove that there is a finite extension $K'|K$ for which $B^{G_{K'}}=K'$, we may for any $k\in \N$ assume after passing to $K'$ large enough that $\chi$ has image in $1+p^k\Z_p$. Hence we may assume that 
\[y:=\frac{1}{e}(\chi(g)-1)\in p\O_K,\]
or $y\in 4\O_K$ if $p=2$.
Let now $g\in \Gamma$ and $b:=\sum b_n\frac{a^n}{n!}\in B$.
Then since $G_K$ acts trivially on $y$, we have
	\begin{eqnarray}\label{eq:explicit-action-of-GK-on-Ben}
		g(\sum_{n\geq 0} b_n\frac{a^n}{n!})&=&\sum_{n\geq 0} g(b_n)\frac{(\chi(g)a+y)^n}{n!}=\sum_{n\geq 0}g(b_n)\sum_{k+m=n}\frac{\chi(g)^ma^m}{m!} \frac{y^k}{k!}\nonumber\\
		&=&\sum_{m=0}^\infty \chi(g)^m\left(\sum_{k=0}^\infty g(b_{m+k})\frac{y^k}{k!}\right)\frac{a^m}{m!}.
	\end{eqnarray}
	Here we note that the sum in brackets converges in $C$ because $y\in p\O_K$ (or $y\in 4\O_K$ for $p=2$).
	 
Assume now that $g(b)=b$ for all $g$, then by comparing coefficients of $\frac{a^n}{n!}$, we see that this is equivalent to
	\[ b_n=\chi(g)^n\left(\sum_{k=0}^\infty g(b_{n+k})\frac{y^k}{k!}\right)\]
        for all $n\geq 0$.
	Following Colmez' argument, we now multiply both sides by $g^{-1}$ and apply for any $j>0$ the normalised trace $\mathrm{tr}_j\colon K_\infty\to K_j$ to the $j$-th cyclotomic subextension $K_j|K$. 
Then since $\mathrm{tr}_j$ is $\Gamma$-equivariant, we have
	\[g^{-1}\mathrm{tr}_j(b_n)=(e\cdot y+1)^n\left(\sum_{k=0}^\infty \mathrm{tr}_j(b_{n+k})\frac{y^k}{k!}\right).\]
	We can consider this expression as a function $\Gamma\to K_j$ of the variable $g\in \Gamma\subseteq \Z_p^\times$ (recall that $y=\frac{1}{e}(\chi(g)-1)$). 
Then the left hand side shows that this function is locally constant, whereas the right hand side shows that this function is analytic. 
It follows that it is constant, hence $\mathrm{tr}_j(b_n)\in K$ for all $j$, hence $b_n\in K$. 
Consequently, for all $n\geq 0$ we have
	\begin{equation}\label{eq:cond-for-b-in-B_en^0-to-be-Galois-inv}
		b_n=(e\cdot y+1)^n\left(\sum_{k=0}^\infty b_{n+k}\frac{y^k}{k!}\right).
	\end{equation}
	For $n=0$, this means that $b_0=\sum_{k=0}^\infty b_{k}\frac{y^k}{k!}$,
	and comparing these inside $C[[y]]$ we deduce that $b_k=0$ for $k\geq 1$. 
Hence $b=b_0\in K$ as desired.

	\subsubsection*{Step 2: Calculation of $H^1(G_K, B)$}
	In order to compute $H^1$, we first wish to see that the natural map
	\begin{equation}\label{eq:H^1-of-C->B_en0}
		H^1_{}(G_K,C)\to H^1_{}(G_K,B)
	\end{equation}
	vanishes. 
To see this, we consider the short exact sequence of $C$-linear $G_K$-modules
\[ 0\to C\to B\to B/C\to 0.\]
  From Tate's theorem we know that
  \[
    R\Gamma(G_K,C)\cong K\oplus K[-1].
  \]
  We claim that also 
  \[ (B/C)^{G_K}\cong K\]
  as a $K$-vector space. 
Once this is known, we see by comparing $K$-vector space dimensions in the long exact sequence
  \[ 0\to \underset{=K}{H^0(G_K,C)}\to \underset{=K,\ \textrm{ by Step 1}}{H^0(G_K, B)}\to H^0(G_K,B/C)\to H^1(G_K,C)\xrightarrow{\eqref{eq:H^1-of-C->B_en0}} H^1(G_K, B)\to H^1(G_K,B/C)\]
  that \eqref{eq:H^1-of-C->B_en0} is zero.
  
  In fact, it suffices to see that $(B/C)^{G_K}\neq 0$ as it must then be of $K$-dimension $1$. 
The non-vanishing is witnessed by the element
  \[
    b:=\log(t)=\sum\limits_{n=1}^\infty(-1)^{n-1}\frac{(e\cdot a)^n}{n}\in B
  \]
  for which $g(\log(t))=\log(t)+\log(\chi(g))\equiv \log(t) \in B/C$. 
Before going on, we note that one can also prove the claim $(B/C)^{G_K}\cong K$ by arguing exactly as in the first part: Observe that as a $C$-module there is a description
  \[\textstyle B/C= \widehat{\bigoplus\limits_{n\geq 1}} C\cdot \tfrac{a^n}{n!}.\]
 	Like for $B$, we deduce that an element $\sum b_n\frac{a^n}{n!}$ in the right hand side satisfies $g(b_n)=b_n$ if and only if \eqref{eq:cond-for-b-in-B_en^0-to-be-Galois-inv} holds, but this time only for $n\geq 1$ instead of $n=0$. 
From this point on, the arguments diverge: We cannot conclude anymore by considering $n=0$, instead we need to expand the right hand side of \eqref{eq:cond-for-b-in-B_en^0-to-be-Galois-inv} for $n=1$, where we find
 	\[ b_1=b_1+\sum_{k=1}^\infty (b_{k+1}+ekb_{k})\frac{y^k}{k!},\]
  and similarly for $n\geq 2$, where the expression is independent of $b_1$. 
This means that we can choose $b_1\in K$ freely, and then the vanishing of the coefficients in front of $y^k$ for $k\geq 1$ impose linear relations on the $b_k$ for $k\geq 2$ which determine $b$ uniquely. 
Hence $\dim_K (B/C)^{G_K}= 1$. 
  \medskip
  
  To finish the proof of the Theorem, it remains by the above long exact sequence to show that
  \[ H^i(G_K,B/C)=0\]
  for $i\geq 1$.
  For this we again first reduce to the extension $K_\infty|K$:
  First note that $H^i(G_{K_\infty},B/C)=\widehat{\bigoplus\limits_{n\geq 1}} H^i(G_{K_{\infty}},C)=0$ for $i\geq 1$ because $G_{K_\infty}$ acts trivially on $a$ and $K_\infty$ is perfectoid. 
Thus it suffices to compute
  \[ H^1(\Gamma,(B/C)^{G_{K_\infty}})\]
  as this is the only group having a potential non-zero contribution to $R\Gamma(G_K,B)$.
  We first note that $(B/C)^{G_{K_\infty}} = \widehat\oplus_{n\geq 1} K_\infty \frac{a^n}{n!}=:D\subseteq B/C$.
  We may by shrinking $\Gamma$ assume that $\Gamma$ is pro-cyclic. 
Then we can choose a topological generator $g\in \Gamma$ and compute $H^1(\Gamma,D)$  as the cokernel of the map
  \[ g-1:D\to D.\]
  It remains to prove that this morphism is surjective. 
To see this, it suffices to prove the statement for $g^m-1$ (for example because $g-1$ divides $g^m-1$, or by inflation-restriction for the closed subgroup generated by $g^m$). 
So we are later free to replace $g$ by $g^m$.
  
  Let now $b=\sum_{n=1}^\infty b_n\frac{a^n}{n!}\in D$, then using \eqref{eq:explicit-action-of-GK-on-Ben}, we have
  \[ (g-1)b=\sum_{n=0}^\infty \left(\chi(g)^n\left(\sum_{k=0}^\infty g(b_{n+k})\frac{y^k}{k!}\right)-b_n\right)\frac{a^n}{n!}.\]
  Let furthermore $c=\sum_{n=1}^\infty c_n\frac{a^n}{n!}\in D$, then to show that $c$ lies in the image we need to solve the equation $(g-1)b=c$, which is equivalent to asking that for all $n\geq 1$, we have
  \[c_n=(g(b_n)\chi(g)^{n}-b_n)+\sum_{k=1}^\infty \chi(g)^ng(b_{n+k})\frac{y^k}{k!}.\]
  In other words, if we consider $(g-1)$ as a continuous endomorphism of the Banach $K_\infty$-module $D$, then in terms of the orthonormal basis $(\frac{a^n}{n!})_{n\geq 1}$, this is represented by an infinite upper triangular block matrix
  
  \[g-1=\left(\begin{matrix}
  		g\chi(g)-1 & \ast &\ast &\dots\\
  		 & g\chi^2(g)-1 &\ast &\dots\\
  		 &  &g\chi^3(g)-1 &\dots\\
  		&&& \ddots
              \end{matrix}\right)\]
          (each block represents a semilinear endomorphism of a $1$-dimensional $K_\infty$-vector space, and the whole matrix a $K$-linear endomorphism). 
          
    To analyse this matrix, we now use the  decomposition $D=D_K\oplus D_X$ according to the decomposition of $K_\infty$ in \Cref{p:Tate-thm-integral-version}, namely we set \[\textstyle D_K:=\widehat\bigoplus_{n\geq 1} K \tfrac{a^n}{n!}\quad \text{and} \quad D_X:=\widehat\bigoplus_{n\geq 1} X \tfrac{a^n}{n!}.\]
    Since the decomposition $K_\infty=K\oplus X$ is $K$-linear and Galois-equivariant, it follows from \eqref{eq:explicit-action-of-GK-on-Ben} that the same is true for $D=D_K\oplus D_X$.
    
    We claim that $g-1$ is even invertible on $D_X$: To this end, we first observe that the entries above the diagonal are all $p$-adically small: Namely, let $M$ be the matrix given by the upper triangular entries, then we can assume that 
	\[|M|\leq |y|.\]
        Indeed, after enlarging $K$ we may assume that $y\in 2p\O_K$, and then $|\frac{y^k}{k!}|\leq |y|$ for all $k\geq 1$.

   	\Cref{p:Tate-thm-integral-version} shows that the $K$-linear operator defined by the block matrix $\rho:=\mathrm{diag}(\rho_1,\rho_2,\dots)$ is bounded and continuous. 
We now consider
  	\[ \rho\circ (g-1)=1+\rho\circ M,\]
  	then $|\rho M|\leq p^\delta |y|$. 
Passing to an extension of $K$ and replacing $g$ by $g^m$, we can make $y=\frac{1}{e}(\chi(g)-1)$ arbitrarily small.
  	 As passing to an extension of $K$ does not change $\delta$, we can therefore assume without loss of generality that $|\rho M|<1$. 
 Thus the matrix $M$ is ``topologically nilpotent''. 
In particular, the matrix $1+\rho M$ is indeed invertible with inverse the bounded operator $\sum_{k=0}^\infty (-\rho M)^k$. 
Thus $g-1$ is invertible on $D_X$, showing that $H^1(\Gamma,D_X)=0$ as desired.
  	
  	It remains to prove that $g-1$ is surjective on $D_K$. Recall from the first part that $g-1$ has kernel $\log(t)$ on $D_K$. Hence we aim to see that $g-1$ is invertible after discarding the first basis vector $\tfrac{a^1}{1}$ on the source. This motivates to consider instead the morphism  \[\textstyle\psi:\widehat\bigoplus_{n\geq 2} K \tfrac{a^n}{n!}\subseteq D_X\xrightarrow{g-1} D_X.\]
  	We claim that $\psi$ is invertible.
  	Let us write $u:=\chi(g)$ to simplify notation. Then with respect to the orthogonal basis $(\frac{a^n}{n!})_{n\geq 2}$ on the source, $\psi$ is now represented by the same matrix as above, but with first column deleted:
  	 \[\arraycolsep=7pt\def\arraystretch{1.5}\psi=\left(\begin{matrix}
  u y	& u \frac{y^2}{2!} &u \frac{y^3}{3!}  &\dots\\
  		 u^2-1 &	u^2 y & u^2 \frac{y^2}{2!} &\\
  		0 &u^3-1 &	u^3 y& \\
  		0&0& & \ddots
  	\end{matrix}\right)\]
  	Since $u\in \O_K^\times$ is a unit and $u^n-1\in \O_K$ is divisible by $y=(u-1)/e$ for any $n$, this shows that the matrix for $\psi/y$ has units on the diagonal, entries of norm $\leq 1$ on the lower subdiagonal, and entries of norm $<1$ on the upper diagonal (after shrinking $\Gamma$ and hence $|y|$ if necessary). It follows that the matrix is invertible, hence $g-1$ is surjective on $D_K$.
  	
  	This finishes the proof of \Cref{sec:c_k-semil-galo-3-continuous-galois-cohomology-of-b-en-0}.
\end{proof}
\begin{remark}
	More axiomatically, in the spirit of the Sen axioms of Berger--Colmez \cite{BergerColmez08}, the above proof works for any field $K$ and with the coefficient field $C$ of $B_{\en}$ replaced by the completed cyclotomic extension $K_\infty|K$ with Galois group $\Gamma$ whenever the following conditions are satisfied:
	\begin{enumerate}
		\item We have Tate's Galois equivariant  normalised traces $\mathrm{tr}:K_\infty\to K$, and
		\item The analogue of \Cref{p:Tate-thm-integral-version} holds, i.e.\ for $\Gamma=\Gal(K_\infty|K)$ we have $H^1(\Gamma,K_\infty)=K$, and for any generator $g\in \Gamma$ and $n\geq 1$, the map $g\chi(g)^n-1:X\to X$ on $X:=\mathrm{ker}(\mathrm{tr})$ has a continuous inverse that is bounded, uniformly in $n$ and for all finite subextensions of $K_\infty|K$.
	\end{enumerate}
	Incidentally, we point out that the element $\alpha$ introduced in \cite[\S4.5]{BergerColmez08} to describe the locally analytic vectors for the Galois action in the Kummer tower can be taken to be our element $(ez)^{-1}$.
\end{remark}
\begin{remark}\label{eq:Galois-cohom-Ben-vs-BSen}
It is natural to ask if the analogue of the Theorem also holds for Colmez' ring $B_{\Sen}^m$ with its natural Galois action by $G_{K_\infty}$\footnote{In the terminology of Colmez: i.e., for each $m\geq 0$, $B_\Sen^m$ has an action of $G_{K_m}$.}, but this is not the case. 
Instead of filtering $B_\Sen$ by the subspaces $B_\Sen^m$, let us filter it by the subspaces
\[
\mathrm{Fil}^m B_\Sen = \left\{ \sum_{n\geq 0} b_n \frac{u^n}{n!}, ~~ |b_n| p^{-\sup(1,m)\cdot n} \to 0 \right\}.
\]
 One has $\mathrm{Fil}^m B_\Sen \subset B_\Sen^{m-1}$ for $m\geq 1$, and $G_{K_{m-1}}$ acts on $\mathrm{Fil}^m B_\Sen$. 
Take an element in $\mathrm{Fil}^{m+1} B_\Sen$, $m\geq 0$. 
Following the same line of argument with $a$ replaced by $u$, we see that since the action on $u$ is given by $g(u)=u+\log\chi(g)$ instead of \eqref{eq:action-of-Gamma-on-a}, we need to replace \eqref{eq:explicit-action-of-GK-on-Ben} by \[g\left(\sum_{n\geq 0} b_n\frac{u^n}{n!}\right)=\sum_{l=0}^\infty \left(\sum_{k=0}^\infty g(b_{l+k})\frac{y^k}{k!}\right)\frac{u^l}{l!}\]
where this time $y:=\log(\chi(g))$. 
The essential difference is that there is no longer a factor of $\chi(g)^l$ before the coefficient of $\frac{u^l}{l!}$. 
Consequently, following the same line of argument, the action of $g-1$ with respect to $(\tfrac{u^n}{n!})_{n\geq 1}$ is now given by a block matrix of the form
\[g-1=\left(\begin{matrix}
	g-1 & \ast &\ast\\
	& g-1 &\ast\\
	&& \ddots
\end{matrix}\right).\]
But this has large cokernel as the cokernel of $g-1$ on $C$ is $H^1(G_{K_m},C)=K_m$. 
The conclusion is that $H^1(G_{K_m},\mathrm{Fil}^m B_{\Sen})$ is also large. 

This is one way in which the ring  $B_{\en}$ is  ``enhanced''.
\end{remark}

\section{$C$-semilinear Galois representations via Hodge--Tate stacks}
\label{sec:c_k-semil-galo}
In this section we prove our main result, \Cref{full-description-of-perf-spd-k}, which gives a description of v-perfect complexes on $\mathrm{Spa}(K)$ in terms of Hodge--Tate stacks. 
Let 
\[
  \Spa(K):=\Spa(K,\mathcal{O}_K)
\]
be the diamond associated to $K$, cf.\ \cite[Definition 15.5]{scholze_etale_cohomology_of_diamonds}, and its v-site
\[
  \Spa(K)_v,
\]
cf.\ \cite[Definition 14.1]{scholze_etale_cohomology_of_diamonds}.
If $\Spa(R,R^+)\in \Spa(K)_v$ is an affinoid perfectoid space over $K$\footnote{Here and in the following we identify the v-site of $K$, which consists of perfectoid spaces $S$ in characteristic $p$ and an untilt $S^\sharp$ over $\Spa(K,\mathcal{O}_K)$, with the site of perfectoid spaces over $\Spa(K,\mathcal{O}_K)$.}, then the natural map
\[
  \pi^\HT_{R^+}\colon\mathrm{Spf}(R^+)^{\rm HT}\to \Spf(R^+)
\]
is an isomorphism, cf.\ \cite[Example 3.12]{bhatt2022prismatization}.
This implies that for any $\Spa(R,R^+)\in \Spa(K)_v$ the morphism $f\colon \Spf(R^+)\to \Spf(\mathcal{O}_K)$ lifts naturally to a map
\[
  \widetilde{f}\colon \Spf(R^+)\to \mathrm{Spf}(\mathcal{O}_K)^{\rm HT}
\]
over $\Spf(\mathcal{O}_K)$.
From here we get a symmetric monoidal, exact functor
\[
  \alstp_K\colon \calPerf(\mathrm{Spf}(\mathcal{O}_K)^{\rm HT})\to \calPerf(\Spa(K)_v),
\]
where the latter category means perfect complexes on $\Spa(K)_v$ for the ``completed structure sheaf''
\[
  (\Spa(R,R^+)\to \Spa(K,\mathcal{O}_K))\mapsto R.
\]
Indeed, by definition
\[
  \calPerf(\mathrm{Spf}(\mathcal{O}_K)^{\rm HT}):=\varprojlim\limits_{\Spec(S)\to \mathrm{Spf}(\mathcal{O}_K)^{\rm HT}} \calPerf(S).
\]
where the limit is taken over the category of all (discrete) rings $S$ with a morphism $\Spec(S)\to \mathrm{Spf}(\mathcal{O}_K)^{\rm HT}$. 
Using the maps $\widetilde{f}\colon \Spf(R^+)=\varinjlim\limits_{n} \Spec(R^+/p^n)\to \mathrm{Spf}(\mathcal{O}_K)^{\rm HT}$, the construction of $\alst_K^{+}$ can now be stated as
\[
  (\mathcal{E}_S)_{\Spec(S)\to \mathrm{Spf}(\mathcal{O}_K)^{\rm HT}}\mapsto ((R\varprojlim\limits_{n} \mathcal{E}_{\Spec(R^+/p^n)\to \mathrm{Spf}(\mathcal{O}_K)^{\rm HT}})[1/p])_{\Spa(R,R^+)\to \Spa(K,\mathcal{O}_K)}.
\]
By construction, the functor $\alstp_K$ induces a functor
\[
  \alst_K \colon \calPerf(\mathrm{Spf}(\mathcal{O}_K)^{\rm HT})[1/p]\to \calPerf(\Spa(K)_v)
\]
by passing to the isogeny category on the source.

\begin{remark}
  \label{sec:c_k-semil-galo-2-generic-fiber-of-Hodge--Tate-stack}
  The generic fiber $\mathrm{Spf}(\mathcal{O}_K)_{\eta}^{\rm HT}$ of the Hodge--Tate stack $\mathrm{Spf}(\mathcal{O}_K)^{\rm HT}$ can be defined as the (analytic sheafification of the) functor
  \[
    (B,B^+)\mapsto \varinjlim\limits_{B_0\subseteq B^+ \textrm{ ring of definition}}\mathrm{Spf}(\mathcal{O}_K)^{\rm HT}(B_0) 
  \]
  on complete adic Huber pairs over $(K,\mathcal{O}_K)$, cf.\ \cite[Proposition 2.2.2.]{Scholze2013}. 
Ideally, the category $\calPerf(\mathrm{Spf}(\mathcal{O}_K)^{\rm HT})[1/p]$ should be defined as a category of perfect complexes on $\mathrm{Spf}(\mathcal{O}_K)_{\eta}^{\rm HT}$. 
By \cite[Theorem 7.8]{mathew2022faithfully} one can by descent define such a category of perfect complexes, and using Cartier-duality over $\mathcal{O}_K$ for the Hopf algebra $\mathcal{O}_{G_\pi}$ (and \Cref{sec:complexes-Hodge--Tate-1-complexes-on-ht-locus-for-r}) see that each perfect complex extends to $\mathrm{Spf}(\mathcal{O}_K)^{\rm HT}$ so that the resulting category of perfect complexes should be equivalent to our more concrete version $\calPerf(\mathrm{Spf}(\mathcal{O}_K)^{\rm HT})[1/p]$. 
\end{remark}

By v-descent of perfect complexes on perfectoid spaces, cf.\ \cite[Theorem 2.1]{anschutz2021fourier}, the category $\calPerf(\Spa(K)_v)$ identifies with the category of ``continuous, semilinear representations of $G_K$ on perfect complexes of $C$-vector spaces'' (here $C=\widehat{\overline{K}}$ as before).\footnote{To make the continuity precise, one could use the solid formalism, or just use the $\infty$-limit
  \[
    \varprojlim\limits_{\Delta} \calPerf(\Spa(C,\mathcal{O}_C)^{\bullet+1})\cong \varprojlim\limits_{\Delta} \calPerf(\Spa(C,\mathcal{O}_C)\times \underline{G_K}^\bullet).
  \]}
Let us note that the usual truncations equip $\calPerf(\Spa(K)_v)$ with a $t$-structure whose heart is the usual category of continuous semilinear representations of $G_K$ on finite dimensional $C$-vector spaces.

Our next aim is to prove the following result.

\begin{theorem}
  \label{sec:c_k-semil-galo-1-pullback-fully-faithful-up-to-quasi-isogeny}
The functor $\alst_K \colon \calPerf(\mathrm{Spf}(\mathcal{O}_K)^{\rm HT})[1/p]\to \calPerf(\Spa(K)_v)$ is fully faithful.
\end{theorem}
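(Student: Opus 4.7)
The plan is to deduce full faithfulness from a cohomological comparison. Since $\alst_K$ is symmetric monoidal and exact, and perfect complexes are dualizable on both sides, the $R\Hom$ on either side is computed by the global sections of the internal Hom, so the problem reduces to showing that for every $\mathcal{G}\in\calPerf(\mathrm{Spf}(\mathcal{O}_K)^{\rm HT})$ the natural map
\[
R\Gamma(\mathrm{Spf}(\mathcal{O}_K)^{\rm HT},\mathcal{G})[\tfrac{1}{p}]\to R\Gamma(\Spa(K)_v,\alst_K(\mathcal{G}))
\]
is an equivalence; by v-descent for perfect complexes on perfectoid spaces the right-hand side identifies with continuous Galois cohomology $R\Gamma_{\cts}(G_K,\alst_K(\mathcal{G}))$.

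To identify both sides, let $(M,\theta_M):=\beta_\pi(\mathcal{G})\in \mathcal{D}(K[\Theta_\pi])$. By \Cref{sec:complexes-Hodge--Tate-cohomology-on-Cartier--Witt-stack}, the left-hand side is $\mathrm{fib}(M\xrightarrow{\theta_M}M)$. For the right-hand side, the base-change square defining $Z_\pi$ gives $\rho_\pi'^{\ast}\tilde{f}^{\ast}\mathcal{G}=M\otimes_{\mathcal{O}_K}A_\en$, where $\rho_\pi'\colon Z_\pi\to\Spf(\mathcal{O}_C)$ is the pulled-back $G_\pi$-torsor; descending along $\rho_\pi'$ expresses $\tilde{f}^{\ast}\mathcal{G}$ as the $G_\pi$-invariants of $M\otimes_{\mathcal{O}_K}A_\en$ for the diagonal action, and reapplying the argument of \Cref{sec:complexes-Hodge--Tate-cohomology-on-Cartier--Witt-stack} (i.e.\ that $G_\pi$-invariants are computed as the fiber of the total Sen operator) yields, after inverting $p$, the formula
\[
\alst_K(\mathcal{G})=\mathrm{fib}\bigl(M\otimes_K B_\en\xrightarrow{1\otimes\Theta_\pi+\theta_M\otimes 1}M\otimes_K B_\en\bigr)
\]
with the $C$-semilinear $G_K$-action inherited from $B_\en$.

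It then remains to compute continuous $G_K$-cohomology of this fiber. Since $M$ carries the trivial $G_K$-action and is a perfect complex of finite-dimensional $K$-vector spaces, $M\otimes_K(-)$ commutes with continuous $G_K$-cohomology, so we may invoke \Cref{sec:c_k-semil-galo-3-continuous-galois-cohomology-of-b-en-0}, which gives $R\Gamma(G_K,B_\en)=K$ in degree zero. Moreover, $\Theta_\pi=(1+ea)\partial_a$ annihilates the invariant line $K\subset B_\en$, so the right-hand side collapses to $\mathrm{fib}(M\xrightarrow{\theta_M}M)$, matching the left-hand side canonically via the natural assembly map. The main obstacle is therefore \Cref{sec:c_k-semil-galo-3-continuous-galois-cohomology-of-b-en-0} itself, which constitutes the technical heart of the argument; it is precisely here that the finer convergence condition built into $B_\en$ (versus Colmez' $B_{\Sen}$, cf.\ \Cref{eq:Galois-cohom-Ben-vs-BSen}) is crucial to force the vanishing of higher cohomology.
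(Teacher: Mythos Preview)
Your proposal is correct and follows essentially the same route as the paper's own proof: reduce full faithfulness to a cohomology comparison, identify the left-hand side as $\mathrm{fib}(M\xrightarrow{\theta_M}M)$ via \Cref{sec:complexes-Hodge--Tate-cohomology-on-Cartier--Witt-stack}, express $\alst_K(\mathcal{G})$ through the $B_\en$-fiber formula, and then apply \Cref{sec:c_k-semil-galo-3-continuous-galois-cohomology-of-b-en-0} together with the observation that $M\otimes_K(-)$ commutes with $R\Gamma(G_K,-)$. Your explicit invocation of dualizability to pass from $R\Hom$ to global sections of the internal Hom is a detail the paper leaves implicit, but the overall architecture is identical.
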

\begin{remark}
Given \Cref{sec:complexes-Hodge--Tate-complexes-on-ht-divisor-generated-by-bk-twists} and Tate's calculation of the continuous Galois cohomology $H^\ast(G_K,C(n))$, $n\in \Z$, \Cref{sec:c_k-semil-galo-1-pullback-fully-faithful-up-to-quasi-isogeny} might first seem trivial to prove: first, reduce checking fully faithfulness to the case of morphisms between Breuil--Kisin twists $\mathcal{O}_{\mathrm{Spf}(\mathcal{O}_K)^{\rm HT}}\{n\}$, and then apply Tate's theorem. 

However, this strategy does not work: the category $\calPerf(\mathrm{Spf}(\mathcal{O}_K)^{\rm HT})[1/p]$ is not generated under colimits by Breuil--Kisin twists.\footnote{This is not a contradiction as the generation in $\mathcal{D}(\mathrm{Spf}(\mathcal{O}_K)^{\rm HT})$ involves \textit{infinite} colimits.} To remedy this one could hope to prove a more general statement involving a certain rationalized version of $\mathcal{D}(\mathrm{Spf}(\mathcal{O}_K)^{\rm HT})$. 
However, one then also needs to enlarge $\calPerf(\Spa(K)_v)$ accordingly in such a way that $\alst_K$ extends to a functor, which still commutes with arbitrary colimits in $D(\mathrm{Spf}(\mathcal{O}_K)^{\rm HT})$. 
We do not know if this possible. 
This is why we pursue a different strategy and use \Cref{t:cohom-B_en}.
\end{remark}

We will start with some preparations.
Fix a uniformizer $\pi\in \mathcal{O}_K$ and consider the $G_\pi$-torsor
\[
  Z_\pi:=\Spf(A_\en)\to \Spf(\mathcal{O}_C)
\]
as in \Cref{sec:expl-funct-explicit-functoriality}. 
The $\mathcal{O}_C$-algebra $A_\en$ is equipped with a continuous $G_K$-action and a commuting endomorphism $\Theta=\Theta_{A_\en}$. 
Let $(A_\inf,J)$ be the perfect prism associated to $\mathcal{O}_C$. 
After fixing a choice
\[
  \pi^\flat:=(\pi, \pi^{1/p},\ldots)\in \mathcal{O}_C^\flat
\]
of compatible $p$-power roots of $\pi$, which we use to send $u\in A_\pi$ to $[\pi^\flat]$, the operator $\Theta$  and the Galois action have been made explicit in \Cref{sec:complexes-Hodge--Tate-sen-operator-for-regular-representation} and in \Cref{sec:expl-funct-explicit-functoriality}, respectively.
Note that under the equivalence from \Cref{sec:complexes-Hodge--Tate-1-complexes-on-ht-locus-for-r} the pullback along the map
\[
  [\Spf(\mathcal{O}_C)/G_K]\to \mathrm{Spf}(\mathcal{O}_K)^{\rm HT}
\]
identifies with the map
\[
 (M, \Theta_{e,M})\mapsto \mathrm{fib}(M\widehat{\otimes}_{\mathcal{O}_K} A_\en\xrightarrow{1\otimes \Theta_{A_\en}+\Theta_{e,M}\otimes 1} M\widehat{\otimes}_{\mathcal{O}_K} A_\en), 
\]
where $G_K$-acts via $A_\en$ on the fiber. 
As in \Cref{sec:expl-funct-explicit-functoriality} we set
\[
  B_\en:=A_\en[1/p].
\]

Given \Cref{sec:c_k-semil-galo-3-continuous-galois-cohomology-of-b-en-0} we can conclude the proof of \Cref{sec:c_k-semil-galo-1-pullback-fully-faithful-up-to-quasi-isogeny}.

\begin{proof}[Proof of \Cref{sec:c_k-semil-galo-1-pullback-fully-faithful-up-to-quasi-isogeny}]
  Let $\mathcal E\in \calPerf(\mathrm{Spf}(\mathcal{O}_K)^{\rm HT})$ and let $(M,\Theta_{\pi,M})=\beta^+_{\pi}(\mathcal E)$.
Then $\alst_K(\mathcal E)$ is the perfect complex of $C$-vector spaces with $G_K$-action
  \[
    V:=\mathrm{fib}(M\otimes_{\mathcal{O}_K}B_\en\xrightarrow{1\otimes \Theta+\Theta_{\pi,M}\otimes 1}M\otimes_{\mathcal{O}_K}B_\en).
  \]
  It suffices to show that the natural map
  \[
    R\Gamma(\mathrm{Spf}(\mathcal{O}_K)^{\rm HT},\mathcal E)[\tfrac{1}{p}]\to R\Gamma(G_K, V)
  \]
  is an isomorphism. 
Here, the left hand side identifies (by \Cref{sec:complexes-Hodge--Tate-cohomology-on-Cartier--Witt-stack}) with
  \[
    \mathrm{fib}(M\xrightarrow{\Theta_{\pi,M}}M)[\tfrac{1}{p}].
  \]
  For the right hand side we use \Cref{sec:c_k-semil-galo-3-continuous-galois-cohomology-of-b-en-0}. 
Namely,
  \[
    \begin{matrix}
      & R\Gamma(G_K, V)\\
      \cong & R\Gamma(G_K, \mathrm{fib}(M\otimes_{\mathcal{O}_K}B_\en\xrightarrow{1\otimes \Theta+\Theta_{\pi,M}\otimes 1}M\otimes_{\mathcal{O}_K}B_\en))\\
      \cong & \mathrm{fib}(M\otimes_{\mathcal{O}_K} R\Gamma(G_K, B_\en)\xrightarrow{1\otimes \Theta+\Theta_{\pi,M}\otimes 1} M\otimes_{\mathcal{O}_K} R\Gamma(G_K, B_\en))\\
      \cong & \mathrm{fib}(M\xrightarrow{\Theta_{\pi,M}} M)[\tfrac{1}{p}],
    \end{matrix}
  \]
  where we used $R\Gamma(G_K,B_\en)\cong K$ (with $\Theta=0$) in the last step.
\end{proof}

We now turn to the description of the essential image of the functor
\[
  \calPerf(\mathrm{Spf}(\mathcal{O}_K)^{\rm HT})[1/p]\to \calPerf(\Spa K_v,\O).
\]
As the target admits a $t$-structure with heart the abelian category $\Rep_C(G_K)$ the claim reduces to the question when a given $V\in \Rep_C(G_K)$ lies in the image. 
Following \cite{min2021hodge}, \cite{gao2022hodge} the essential image should be given by the nearly Hodge--Tate representations.

\begin{definition}[{\cite[Definition 1.1.3]{gao2022hodge}}]
  \label{sec:c_k-semil-galo-2-nearly-Hodge--Tate}
  A representation $V\in \Rep_C(G_K)$ is nearly Hodge--Tate if its Sen operator $\Theta$ has eigenvalues in $\Z+\delta_{\mathcal{O}_K/\Z_p}^{-1}\mathfrak{m}_{\overline{K}}\subseteq \overline{K}$ where $\delta_{\mathcal{O}_K/\Z_p}=(E^\prime_\pi(\pi))$ is the inverse different ideal and $\Theta\colon V\to V$ denotes the classical Sen operator as constructed in \cite{sen1980continuous}.
\end{definition}

To see this, we start with the following lemma:

\begin{lemma}
	\label{sec:c_k-semil-galo-1-alternative-description-of-galois-action}
	Let $(M,\Theta_{\pi,M})\in \mathcal{D}(BG_\pi)\cong \mathcal{D}(\mathrm{Spf}(\mathcal{O}_K)^{\rm HT})$. 
	Then its pullback to $\Spf(\O_C)$ is isomorphic to the complex $M\widehat{\otimes}_{\O_K}\O_C$ and if $\sigma\in G_K$, then the resulting action of $\sigma$ on $M\widehat{\otimes}_{\O_K}\O_C$ is given by the composition
	\[
	M\widehat{\otimes}_{\O_K}\O_C\xrightarrow{\Id\otimes \sigma} M\widehat{\otimes}_{\O_K}\O_C\xrightarrow{\gamma_{d(\sigma),M}^{-1}} M\widehat{\otimes}_{\O_K}\O_C
	\]
	with $d(\sigma)\in G_\pi(\O_C)$ defined in \Cref{sec:galo-acti-cart-1-galois-action-on-a-en-0} and $\gamma_{d(\sigma),M}$ is as described in \Cref{sec:complexes-Hodge--Tate-explicit-action}. Thus, explicitly
	\[
	\sigma(m\otimes c)=\chi_{\pi^\flat}(\sigma)^{\Theta_{\pi,M}/e}(m\otimes 1)\sigma(c).
	\]
\end{lemma}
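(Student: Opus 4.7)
The plan is to combine the description of the pullback of $\mathcal{E}$ to $Z_\pi$ with the cocycle identity from \Cref{sec:galo-acti-cart-1-galois-action-on-a-en-0} describing the $G_K$-action, and with the explicit formula for the group action from \Cref{sec:complexes-Hodge--Tate-explicit-action}. First I would identify the underlying module: under the equivalence $\mathcal{D}(\mathrm{Spf}(\mathcal{O}_K)^{\rm HT}) \simeq \mathcal{D}(BG_\pi)$, the pullback of $\mathcal{E}$ along $\rho_\pi$ recovers $M$ together with its $G_\pi$-equivariance $\gamma_{-,M}$. Pulling back further along $Z_\pi \to \Spf(\mathcal{O}_K)$ yields the $G_\pi$-equivariant module $M \widehat{\otimes}_{\mathcal{O}_K} A_\en$ over $Z_\pi$; restricting along the unit section $e \colon \Spf(\mathcal{O}_C) \to G_{\pi,\mathcal{O}_C} \xisomarrow{\gamma_{\pi^\flat}} Z_\pi$ (via the counit $A_\en \to \mathcal{O}_C$) then identifies $\tilde{f}^* \mathcal{E}$ with $M \widehat{\otimes}_{\mathcal{O}_K} \mathcal{O}_C$.

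Next I would identify the Galois action. Since $\tilde{f}$ is $G_K$-equivariant (with trivial $G_K$-action on $\mathrm{Spf}(\mathcal{O}_K)^{\rm HT}$), the pullback inherits a canonical $G_K$-action. By \Cref{sec:galo-acti-cart-1-galois-action-on-a-en-0}, the natural $G_K$-action on $Z_\pi$ corresponds under $\gamma_{\pi^\flat}$ to the action on $G_{\pi,\mathcal{O}_C}$ obtained by first right-multiplying by $d_\sigma$ and then applying $\sigma$ to the coefficients. Translating back via the $G_\pi$-equivariance of $M \widehat{\otimes}_{\mathcal{O}_K} A_\en$, right-translation by $d_\sigma$ on the torsor corresponds, after restriction along $e$, to the automorphism $\gamma_{d_\sigma, M}^{-1}$ of $M \widehat{\otimes}_{\mathcal{O}_K} \mathcal{O}_C$; the inverse reflects the standard direction flip between a right torsor action and the induced left action on an associated bundle.

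Finally, to make this explicit I would apply \Cref{sec:complexes-Hodge--Tate-explicit-action}: for $g = (t,b) \in G_\pi$ with $t = 1+eb$ one has $\gamma_{g,M} = t^{\Theta_{\pi,M}/e}$. Since $d_\sigma^{-1} = (\chi_{\pi^\flat}(\sigma), z \cdot c(\sigma))$ with $\chi_{\pi^\flat}(\sigma) = 1 + e \cdot z \cdot c(\sigma)$ (cf.\ \Cref{sec:expl-funct-chi-pi-flat}), I obtain $\gamma_{d_\sigma, M}^{-1} = \gamma_{d_\sigma^{-1}, M} = \chi_{\pi^\flat}(\sigma)^{\Theta_{\pi,M}/e}$, yielding the formula $\sigma(m \otimes c) = \chi_{\pi^\flat}(\sigma)^{\Theta_{\pi,M}/e}(m \otimes 1) \cdot \sigma(c)$.

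The main obstacle is a bookkeeping one: carefully tracking the direction conventions (left vs.\ right multiplication in \Cref{sec:galo-acti-cart-1-galois-action-on-a-en-0}, and the corresponding inversion when passing from a right torsor action to the action on an associated bundle) so that the cocycle produces the exponent $\Theta_{\pi,M}/e$ with the correct sign. Once these are pinned down, the statement is a direct unpacking of the earlier constructions.
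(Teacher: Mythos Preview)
Your proposal is correct and follows essentially the same strategy as the paper: both use \Cref{sec:galo-acti-cart-1-galois-action-on-a-en-0} to describe the $G_K$-action on $Z_\pi$ as translation by the cocycle $d_\sigma$ followed by $\sigma$ on coefficients, and both invoke the standard sign flip when passing from a right torsor action to the induced action on an associated module. The paper packages this flip as the elementary observation that the map $H\times V\to H\times |V|$, $(h,v)\mapsto (h,h^{-1}v)$ converts the diagonal $H$-action into left multiplication while turning right multiplication on $H$ into the inverse of the given $H$-action on $|V|$; this is exactly your ``associated bundle'' remark, and the final application of \Cref{sec:complexes-Hodge--Tate-explicit-action} to obtain $\chi_{\pi^\flat}(\sigma)^{\Theta_{\pi,M}/e}$ is identical.
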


\begin{proof}
	First of all note that by \Cref{sec:galo-acti-cart-1-galois-action-on-a-en-0} the Galois action of $\sigma\in G_K$ on $A_\en\cong \mathcal{O}_{G_\pi}\widehat{\otimes}_{\mathcal{O}_K}\mathcal{O}_C$ is given by first applying $\mathrm{Id}_{\mathcal{O}_{G_\pi}} \otimes \sigma$, and then $\gamma_{d(\sigma), \mathcal{O}_{G_\pi}\widehat{\otimes}_{\mathcal{O}_K}\mathcal{O}_C}$. 
	Now, we can apply the following general observation: If $H$ is any group (in any topos) and $V$ any $H$-representation with underlying abelian group $|V|$, then the map
	\[
	H\times V\to H\times |V|,\ (h,v)\to (h,h^{-1}v)
	\]
	is an isomorphism, which converts the diagonal action on the left hand side to the left multiplication on $H$ on the right hand side, and the right multiplication on $H$ in $H\times V$ to the $H$-action on $H\times |V|$ given by $g\cdot (h,v)=(hg,h^{-1}v)$. 
	In particular, the invariants of the diagonal action on $H\times V$ identify with $|V|$ and the remaining right action of $H$ with the inverse of the given action on $V$. 
	Now we can apply this to $H=G_\pi, V=M$ and to the right multiplication on $H$. 
	Formulated at the level of comodules, this implies that
	\[
	M\widehat{\otimes}_{\mathcal{O}_K} \mathcal{O}_C\cong \mathrm{fib}(M\widehat{\otimes}_{\mathcal{O}_K} A_\en\xrightarrow{1\otimes \Theta_{A_\en}+\Theta_{e,M}\otimes 1} M\widehat{\otimes}_{\mathcal{O}_K} A_\en)
	\]
	and that the right $G_\pi$-action on $A_\en\cong \mathcal{O}_{G_\pi}\widehat{\otimes}_{\mathcal{O}_K} \mathcal{O}_C$ transforms to the inverse of the $G_\pi$-action on $M$. 
	This implies the claim.
\end{proof}

For example, if $(M,\Theta_{\pi,M})=(\mathcal{O}_K, e\cdot n)$, then with $z\in C^\times$ as in \Cref{l:galois-action-on-z}
\[
\sigma(1\otimes z^n)=\chi_{\pi^\flat}(\sigma)^{n}\cdot  1\otimes \chi(\sigma)^n\chi_{\pi^\flat}(\sigma)^{-n} z^n=\chi(\sigma)^n 1\otimes z^n
\]
as expected.

\begin{lemma}
  \label{sec:c_k-semil-galo-3-description-of-essential-image}
  An object $V\in \Rep_C(G_K)$ lies in the essential image of $\alst_K$ if and only if it is nearly Hodge--Tate.
\end{lemma}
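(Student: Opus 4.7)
The plan is to identify the classical Sen operator of $V := \alst_K(\mathcal{E})$ with $\Theta_{\pi,M}/e$, where $(M,\Theta_{\pi,M}) := \beta_\pi(\mathcal{E})$, and then translate the essential-image description of \Cref{sec:an-analytic-variant-corollary-analytic-variant} into the nearly Hodge--Tate condition. As a preliminary reformulation, topological nilpotence of $\prod_{i=0}^{p-1}(\Theta_{\pi,M}-ei)$ on $H^\ast(M)$ means exactly that every generalized eigenvalue $\lambda$ of $\Theta_{\pi,M}$ satisfies $\lambda \equiv ei \pmod{\mathfrak{m}_{\overline K}}$ for some $i \in \{0,\dots,p-1\}$; since the image of $\Z$ in $\overline k$ equals $\F_p$, this is equivalent to $\lambda/e \in \Z + e^{-1}\mathfrak{m}_{\overline K} = \Z + \delta_{\O_K/\Z_p}^{-1}\mathfrak{m}_{\overline K}$. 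So a preimage $\mathcal{E}$ with $\beta_\pi(\mathcal{E}) = (M,\Theta_{\pi,M})$ exists precisely when $\Theta_{\pi,M}/e$ satisfies the eigenvalue condition of \Cref{sec:c_k-semil-galo-2-nearly-Hodge--Tate}.

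For the only-if direction, I would use \Cref{sec:c_k-semil-galo-1-alternative-description-of-galois-action} to write $V \simeq M \otimes_K C$ with Galois action $\sigma(m\otimes c) = \chi_{\pi^\flat}(\sigma)^{\Theta_{\pi,M}/e}(m)\otimes \sigma(c)$, and apply \Cref{l:galois-action-on-z}, which identifies the ratio $\chi_{\pi^\flat}/\chi$ with the coboundary $z/\sigma(z)$ for a suitable $z \in C^\times$. Restricting to $\Gamma = \Gal(K_\infty/K)$ and differentiating at a topological generator, the $z$-twist contributes only an inner conjugation by $z^{\Theta_{\pi,M}/e}$, so the classical Sen operator of $V$ has the same generalized eigenvalues as $\Theta_{\pi,M}/e$. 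By the preliminary reformulation these lie in $\Z + \delta_{\O_K/\Z_p}^{-1}\mathfrak{m}_{\overline K}$, so $V$ is nearly Hodge--Tate.

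For the if direction, classical Sen theory assigns to a nearly Hodge--Tate $V$ an operator $\theta_V$ on its decompletion whose eigenvalues lie in $\Z + \delta_{\O_K/\Z_p}^{-1}\mathfrak{m}_{\overline K}$. The nearly Hodge--Tate condition is precisely what ensures convergence of the series of \Cref{sec:complexes-Hodge--Tate-explicit-action} evaluated on $\theta_V$, which in turn provides the analyticity required to descend this decompletion to $K$: form $M := (V \otimes_C B_\en)^{G_K}$ with induced operator $\Theta_{\pi,M} := e\theta_V$, and use the Galois cohomology computation \Cref{sec:c_k-semil-galo-3-continuous-galois-cohomology-of-b-en-0} to verify $\dim_K M = \dim_C V$. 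By the preliminary reformulation $(M,\Theta_{\pi,M}) = \beta_\pi(\mathcal{E})$ for some $\mathcal{E}$, and full faithfulness (\Cref{sec:c_k-semil-galo-1-pullback-fully-faithful-up-to-quasi-isogeny}) upgrades the evident coincidence of associated $C$-semilinear data to an isomorphism $\alst_K(\mathcal{E}) \simeq V$.

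The main obstacle is the if direction: descending the Sen decompletion (which a priori lives over the Kummer or cyclotomic tower) to a genuine $K$-module. This is the familiar subtle point of Sen theory discussed in \Cref{sec:introduction}, and it is resolved here by using $B_\en$ rather than $B_\Sen$ as a period ring, since $B_\en$ has trivial higher Galois cohomology (\Cref{sec:c_k-semil-galo-3-continuous-galois-cohomology-of-b-en-0}) exactly over the range of twists dictated by the nearly Hodge--Tate condition, whereas $B_\Sen$ does not (cf.\ \Cref{eq:Galois-cohom-Ben-vs-BSen}).
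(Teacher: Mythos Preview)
Your only-if direction follows the paper's line: identify $V\cong M\otimes_K C$ with action via $\chi_{\pi^\flat}^{\Theta_{\pi,M}/e}$ (\Cref{sec:c_k-semil-galo-1-alternative-description-of-galois-action}), then use \Cref{l:galois-action-on-z} to pass from $\chi_{\pi^\flat}$ to $\chi$ and read off the classical Sen operator as $\Theta_{\pi,M}/e$. One point you gloss over: the ``inner conjugation by $z^{\Theta_{\pi,M}/e}$'' need not converge for the given $z$. The paper handles this by replacing $z$ with the element $z'\in 1+e p^\alpha\O_C$ of \Cref{p:comparison-B_en-B_Sen} (at the cost of a finite extension $K'$), so that $(z')^{\Theta_{\pi,M}/e}$ is a genuinely convergent operator and one may invoke Sen's uniqueness of $\Theta$ on the nose.

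Your if direction takes a genuinely different route, and here there is a gap. The paper does \emph{not} form $(V\otimes_C B_\en)^{G_K}$. Instead it twists the given $G_K$-action on $V$ by $\psi_\sigma:=\chi_{\pi^\flat}(\sigma)^{-\Theta}$ (whose convergence is exactly the nearly Hodge--Tate condition), checks via the classical cyclotomic basis $v_1,\dots,v_n$ that the twisted action has trivial Sen operator, and then quotes Sen's \cite[Theorem~6]{sen1980continuous} to conclude that $M:=V^{G_K,\ast}$ already has $K$-dimension $n$; the original action on $M$ is then visibly $\chi_{\pi^\flat}^{\Theta}$, i.e.\ $V\cong\alst_K(M,e\Theta)$.

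Your Fontaine-style proposal is natural, but the appeal to \Cref{sec:c_k-semil-galo-3-continuous-galois-cohomology-of-b-en-0} does not do what you claim. That theorem computes $R\Gamma(G_K,B_\en)\cong K$, i.e.\ invariants of $B_\en$ itself; it says nothing about $(V\otimes_C B_\en)^{G_K}$ when $V$ carries a nontrivial $G_K$-action. To get $\dim_K(V\otimes_C B_\en)^{G_K}=\dim_C V$ you would first have to \emph{untwist} $V$ inside $B_\en$, i.e.\ produce for the Sen basis $v_i$ (on which some open subgroup acts by $\chi^\Theta$) explicit elements $b_i\in B_\en$ with $\sigma(b_i)=\chi(\sigma)^{-\Theta}b_i$, and then still descend from that open subgroup to $G_K$. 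That untwisting step is essentially the content of the paper's argument (it is the multiplication by $w^{\Theta}$ there), and it is not a formal consequence of \Cref{sec:c_k-semil-galo-3-continuous-galois-cohomology-of-b-en-0}. Finally, your invocation of full faithfulness at the end is misplaced: full faithfulness of $\alst_K$ compares two objects already known to be in the image; it cannot by itself produce an isomorphism $\alst_K(\mathcal{E})\cong V$ for a $V$ not yet known to lie there. You need the direct comparison map $M\otimes_K C\to V$ and a proof that it is bijective, which again comes down to the missing admissibility statement.
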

Phrased in terms of prismatic crystals this result was conjectured in \cite{min2021hodge} and proved in \cite[Theorem 1.1.5]{gao2022hodge}. 
Note that the $A_1$ appearing in \cite{min2021hodge}, \cite{gao2022hodge} is (probably) equal to $-\Theta_\pi$ by unraveling all identifications.
\begin{proof}
  Assume $(M,\Theta_{\pi,M})\in \mathrm{Vec}(\mathrm{Spf}(\mathcal{O}_K)^{\rm HT})[1/p]$ maps to $V$, i.e.,
  $V=M\otimes_{\mathcal{O}_K} C$ with the action as described in \Cref{sec:c_k-semil-galo-1-alternative-description-of-galois-action}. 
Set $z\in C^\times$ as in \Cref{l:galois-action-on-z}. 
Then
  \[
    \frac{\sigma(z)}{z}=\chi(\sigma)\chi_{\pi^\flat}(\sigma)^{-1}.
  \]
  Passing to a finite field extension $K^\prime$ of $K$ as in \Cref{p:comparison-B_en-B_Sen} yields $z^\prime\in C^\times$ such that
  \[
    \frac{\sigma(z^\prime)}{z^\prime}=\chi(\sigma)\chi_{\pi^\flat}(\sigma)^{-1}.
  \]
  for all $\sigma\in G_K$ and such that $w:=(z^\prime)^{\Theta_{\pi,M}/e}$ converges.
  Then for $\sigma\in G_{K^\prime}$, $m\in M\subseteq V,$
  \[
    \sigma(w\cdot m)=\sigma(w)\chi_{\pi^\flat}(\sigma)^{\Theta_{\pi,M}/e}m=\chi(\sigma)^{\Theta_{\pi,M}/e}wm.
  \]
  This implies that $e\Theta=\Theta_{\pi,M}$ by the unique characterisation of the classical Sen operator $\Theta$ for $V$, cf.\ \cite[Theorem 4]{sen1980continuous}. 
In particular, $\Theta$ has eigenvalues in $\Z+E^\prime_\pi(\pi)\mathfrak{m}_{\overline{K}}$ because $\Theta_{\pi,M}$ has eigenvalues in $e\cdot\Z+\mathfrak{m}_{\overline{K}}$ by \Cref{sec:complexes-Hodge--Tate-1-complexes-on-ht-locus-for-r}.
  Conversely, assume that $V$ is nearly Hodge--Tate. 
Then for any $\sigma\in G_K$ the sum
  \[
    \psi_\sigma:=\chi_{\pi^\flat}(\sigma)^{-\Theta}\colon V\to V
  \]
  converges (see the proof of \Cref{sec:complexes-Hodge--Tate-explicit-action}). 
Now define a new $G_K$-action on $V$ by setting
  \[
    \sigma \ast v:=\psi_\sigma(\sigma(v)).
  \]
  Again by classical Sen theory, we can find $v_1,\ldots, v_n\in V$ such that for a suitable open subgroup $U\subseteq G_K$, any $\sigma\in U$ acts via
  \[
    \sigma(v_i)=\chi(\sigma)^\Theta v_i=\exp(\Theta\cdot \log(\chi(\sigma)))v_i.
  \]
  Using multiplication by $w^{\Theta}$ on the $K$-span of $v_1,\ldots, v_n$ we see that $V^{U,\ast}$, the space of invariants of $U$ for the action via $\ast$, has dimension $n$. 
In particular, $V$ for the action via $\ast$ has trivial Sen operator. 
But this implies that it is generated by its $G_K$-invariants $M$ (more precisely, $M\otimes_K C\to V$ is an isomorphism for $M:=V^{G_K,\ast}$), cf.\ \cite[Theorem 6]{sen1980continuous}. 
By construction, the given action on $V$ equips $M$ with an action via $\chi_{\pi^\flat}(\sigma)^{\Theta}$, i.e., $V$ is associated with $(M,e\Theta)\in \calPerf(BG_\pi)$. 
This finishes the proof. 
\end{proof}

\begin{remark}
\label{relation-with-sen}
Alternatively, one could use \Cref{p:comparison-B_en-B_Sen-bis} to relate $\Theta_{\pi,M}$ and $\Theta$.
\end{remark}

\begin{remark}
\label{sec:comp-diff-unif}
The construction of $\alst_K$ is independent of any choice, but $\beta$ depends on the choice of a uniformizer $\pi$.
Conversely, we can use these statements to clarify the way this description of perfect complexes depends on these choices. 
Namely, let $\tilde{\pi}\in \mathcal{O}_K$ be a second uniformizer. 
Then there exists a unique isomorphism
\[
  \delta_{\pi,\tilde{\pi}}\colon G_{\tilde{\pi}}\to G_{\pi},
\]
which is compatible with the two respective projections to $\mathbb{G}_m^\sharp$. 
Explicitly,
\[
  \delta_{\pi,\tilde{\pi}}(t,a)=(t, \frac{\tilde{e}}{e} a),
\]
where $\tilde{e}:=E^\prime_{\tilde{\pi}}(\tilde{\pi})$. 
The functor
\[
  \delta^\ast_{\pi, \tilde{\pi}}\colon \mathcal{D}(BG_\pi)\to \mathcal{D}(BG_{\tilde{\pi}})
\]
sends a complex $(M,\Theta_{\pi,M})$ to the pair $(M,\frac{e}{\tilde{e}} \Theta_{\pi,M})$.
But there is also a different equivalence, more relevant to us. 
Namely, let
\[
  \rho_\pi\colon \Spf(\O_K)\to \mathrm{Spf}(\mathcal{O}_K)^{\rm HT},\ \rho_{\tilde{\pi}}\colon \Spf(\O_K)\to \mathrm{Spf}(\mathcal{O}_K)^{\rm HT}
\]
be the two morphisms associated with $\pi, \tilde{\pi}$. 
Then we obtain an equivalence
\[
 \beta_{\pi,\tilde{\pi}}\colon BG_\pi\cong \mathrm{Spf}(\mathcal{O}_K)^{\rm HT}\cong BG_{\tilde{\pi}},
\]
which does \textit{not} agree with $\delta_{\pi,\tilde{\pi}}$. 
For example, $\beta_{\pi,\tilde{\pi}}$ does not map the trivial $G_\pi$-torsor to the trivial $G_{\tilde{\pi}}$-torsor.

Fix compatible systems $\pi^\flat$, $\tilde{\pi}^\flat$, which yield identifications between the three morphisms
\[
  \tilde{f}:\Spf(\O_C)\to \mathrm{Spf}(\mathcal{O}_K)^{\rm HT},\]
  \[
   \Spf(\O_C)\to \Spf(\O_K) \xrightarrow{\rho_\pi} \mathrm{Spf}(\mathcal{O}_K)^{\rm HT},\]
   \[ \Spf(\O_C)\to \Spf(\O_K) \xrightarrow{\rho_{\tilde{\pi}}} \mathrm{Spf}(\mathcal{O}_K)^{\rm HT}.
\]

Let $\mathcal{E}\in \Vec(\mathrm{Spf}(\mathcal{O}_K)^{\rm HT})[1/p]$ and denote by
\[
(M_\pi, \Theta_\pi)\in \Vec(BG_\pi)[1/p],  ~~ (M_{\tilde{\pi}}, \Theta_{\tilde{\pi}})\in \Vec(BG_{\tilde{\pi}})[1/p]
\]
 the corresponding objects. 
Set $V:=f^\ast \mathcal{E}\in \Rep_C(G_K)$.
Then $\gamma_{\pi^\flat}$ induces an isomorphism $V\cong M_\pi\otimes_K C$ with action via $\chi_{\pi^\flat}$, cf.\ \Cref{sec:c_k-semil-galo-1-alternative-description-of-galois-action}. 
Now \Cref{sec:c_k-semil-galo-1-pullback-fully-faithful-up-to-quasi-isogeny} and the proof of \Cref{sec:c_k-semil-galo-3-description-of-essential-image} imply that $M_\pi$ is the unique $G_K$-stable $K$-subspace of $V$ such that $G_K$ acts on $M_\pi$ via $\chi_{\pi^\flat}(-)^{\Theta}$, where $\Theta\colon V\to V$ is the Sen operator.
Similarly, $M_{\tilde{\pi}}$ identifies with the unique $G_K$-stable $K$-subspace of $V$ on which $G_K$-acts via $\chi_{\tilde{\pi}^\flat}(-)^\Theta$. 
This gives, in principle, a way of constructing $M_{\tilde{\pi}}$ out of $M_\pi$ or vice versa, or in other words describing instances of $\beta^\ast_{\pi, \tilde{\pi}}$:

Set $z:=\theta(\frac{E_\pi([\pi^\flat])}{E_{\tilde{\pi}}([\tilde{\pi}^\flat])})$. 
Then
\[
  \frac{\sigma(z)}{z}=\chi_{\pi^\flat}(\sigma)\chi_{\tilde{\pi}^\flat}(\sigma)^{-1}
\]
for $\sigma\in G_K$. 
If $z^{\Theta}$ is well-defined, then
\[
  M_{\pi}=z^\Theta\cdot M_{\tilde{\pi}}
\]
as subspaces of $V$. 
Thus, in this case the relation between $M_\pi$ and $M_{\tilde{\pi}}$ is more direct.
\end{remark}

We finally arrive out our desired description of the whole category $\calPerf(\Spa(K)_v)$.

\begin{theorem}
  \label{full-description-of-perf-spd-k}
  For any finite Galois extension $L/K$ the functor
  \[
   \alst_L\colon \calPerf(\mathrm{Spf}(\mathcal{O}_L)^{\rm HT})[1/p]\to \calPerf(\Spa (L)_v)
  \]
  is fully faithful and induces a fully faithful functor
  \[
   \alst_{L/K} \colon \calPerf([\mathrm{Spf}(\mathcal{O}_L)^{\rm HT}/{\Gal(L/K)}])[1/p] \to \calPerf(\Spa(K)_v)
  \]
  on $\Gal(L/K)$-equivariant objects. 
Each $\mathcal{E}\in \calPerf(\Spa(K)_v)$ lies in the essential image of some $\alst_{L/K}$. 
Consequently, we get an equivalence
  \[
   2\text{-}\varinjlim\limits_{L/K} \calPerf([\mathrm{Spf}(\mathcal{O}_L)^{\rm HT}/\mathrm{Gal}(L/K)])[1/p] \cong \calPerf(\mathrm{Spa}(K)_v),
   \]
   where $L$ runs over finite Galois extensions  of $K$ contained in $\overline{K}$. 
\end{theorem}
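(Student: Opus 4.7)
The plan is to bootstrap the previous results on a fixed extension $L/K$ into a Galois-equivariant statement, then use classical Sen theory to arrange the nearly Hodge--Tate hypothesis after enlarging $L$. I will proceed in three steps.

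\textbf{Step 1: Construction and fully faithfulness of $\alst_{L/K}$.} For any finite Galois $L/K$, the group $\Gamma = \Gal(L/K)$ acts naturally on $\O_L$, hence by functoriality of the Hodge--Tate stack on $\Spf(\O_L)^{\rm HT}$, and compatibly on $\Spa(L)_v$ over $\Spa(K)_v$. The functoriality argument of \Cref{sec:expl-funct-explicit-functoriality} shows that $\alst_L$ is $\Gamma$-equivariant, so it descends to a functor $\alst_{L/K}$ landing in $\calPerf([\Spa(L)/\Gamma]_v) \simeq \calPerf(\Spa(K)_v)$, where the last identification is v-descent along the Galois cover $\Spa(L)\to \Spa(K)$. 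Given $\mathcal E, \mathcal F \in \calPerf([\Spf(\O_L)^{\rm HT}/\Gamma])[\tfrac{1}{p}]$, the mapping object computes as the $\Gamma$-homotopy fixed points of $R\Hom(\mathcal E|_L, \mathcal F|_L)$; applying the fully faithful $\alst_L$ from \Cref{sec:c_k-semil-galo-1-pullback-fully-faithful-up-to-quasi-isogeny} commutes with these fixed points, yielding fully faithfulness of $\alst_{L/K}$.

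\textbf{Step 2: Essential image after enlarging $L$.} Let $V \in \calPerf(\Spa(K)_v)$. Truncating with respect to the natural $t$-structure (whose heart is $\Rep_C(G_K)$) and using the triangle $\tau^{<b}V \to V \to H^b(V)[-b]$, the fully faithfulness of $\alst_{L/K}$ established in Step 1 allows to lift $V$ once one has lifted all its cohomology groups and the connecting maps (which are automatic). This reduces the problem to $V \in \Rep_C(G_K)$. By classical Sen theory, the Sen operator $\Theta_V$ has finitely many eigenvalues, all lying in some finite extension of $K$ in $\overline{K}$. Enlarging $L$ to contain those eigenvalues and to have sufficiently large different (e.g.\ by adjoining enough $p$-power roots of a uniformizer, which makes $v_p(\delta_{\O_L/\Z_p})$ arbitrarily large), we can arrange that every eigenvalue of $\Theta_V$ lies in $\Z + \delta_{\O_L/\Z_p}^{-1}\mathfrak{m}_{\overline{K}}$, so $V|_{G_L}$ is nearly Hodge--Tate over $L$. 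By \Cref{sec:c_k-semil-galo-3-description-of-essential-image} applied to $L$, there exists $\mathcal E_L \in \Vec(\Spf(\O_L)^{\rm HT})[\tfrac{1}{p}]$ with $\alst_L(\mathcal E_L) \simeq V|_L$. Then fully faithfulness of $\alst_L$ implies that the $\Gamma$-equivariant structure on $V|_L$ (coming from the fact that $V$ descended to $\Spa(K)$) transports uniquely to a $\Gamma$-equivariant structure on $\mathcal E_L$; the resulting object of $\calPerf([\Spf(\O_L)^{\rm HT}/\Gamma])[\tfrac{1}{p}]$ maps to $V$ under $\alst_{L/K}$.

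\textbf{Step 3: Colimit presentation.} Combining Steps 1 and 2, each $\alst_{L/K}$ is fully faithful and every object of $\calPerf(\Spa(K)_v)$ lies in the essential image of some $\alst_{L/K}$. Since the transition functors along $L \subseteq L'$ are compatible with the $\alst_{L/K}$'s and are fully faithful, the filtered $2$-colimit
\[
  2\text{-}\varinjlim_{L/K} \calPerf([\Spf(\O_L)^{\rm HT}/\Gal(L/K)])[\tfrac{1}{p}] \longrightarrow \calPerf(\Spa(K)_v)
\]
is essentially surjective and remains fully faithful (filtered colimits of fully faithful functors are fully faithful), hence an equivalence. The main point requiring care will be Step 2: namely, verifying that enlarging $L$ can simultaneously arrange all Sen eigenvalues to satisfy the nearly Hodge--Tate condition in the sense of Definition 4.4 for $L$, and that the argument lifts cleanly through the $t$-structure on $\calPerf$ rather than merely on the abelian heart.
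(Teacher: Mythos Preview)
Your proposal is correct and follows essentially the same approach as the paper's own proof: reduce fully faithfulness of $\alst_{L/K}$ to that of $\alst_L$ via finite \'etale descent, reduce essential surjectivity to the heart via the $t$-structure, and then invoke \Cref{sec:c_k-semil-galo-3-description-of-essential-image} after enlarging $L$ to make the restricted representation nearly Hodge--Tate. Your justification for the last point---that adjoining $p$-power roots of a uniformizer makes $v_p(\delta_{\O_L/\Z_p})$ arbitrarily large, so that $\Z+\delta_{\O_L/\Z_p}^{-1}\mathfrak m_{\overline K}$ eventually contains any fixed finite set of eigenvalues---is more explicit than the paper's one-line assertion, but the underlying argument is the same.
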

\begin{proof}
  Full faithfulness of $\alst_L$ is just \Cref{sec:c_k-semil-galo-1-pullback-fully-faithful-up-to-quasi-isogeny} applied to $K=L$. 
By naturality of the Hodge--Tate stacks the functor $\alst_L$ passes to $\Gal(L/K)$-equivariant objects, and the resulting functor $\alst_{L/K}$ is again fully faithful by finite \'etale descent of perfect complexes and full faithfulness of $\alst_L$. 
If $V\in \Rep_C(G_K)$, then its restriction $W:=V_{|G_L}$ to some $G_L$ for $L/K$ a large enough finite Galois extension will become nearly Hodge--Tate (with respect to $L$), and hence lies in the essential image of $\alst_L$. 
As $W\in \calPerf(\Spa (L)_v)$ is $\Gal(L/K)$-equivariant and $\alst_L$ fully faithful, the final claim follows (note that each $\mathcal{E}\in \calPerf(\Spa (K)_v)$ can be represented as a complex of continuous $G_K$-representations).
\end{proof}

\begin{remark}
\label{description-pro-system-of-stacks}
More geometrically, one could consider the pro-system
\[
  Y:= "\varprojlim" [\mathrm{Spf}(\mathcal{O}_L)^{\rm HT}/\Gal(L/K)]
\]
over finite Galois extensions $L/K$ contained in $\overline{K}$ and consider the morphism
\[
 \tilde{f}_{\mathrm{pro}} \colon [\Spf(\mathcal{O}_C)/G_K]\to Y,
\]
obtained by the natural lifts $\Spf(\mathcal{O}_C)\to \mathrm{Spf}(\mathcal{O}_L)^{\rm HT}$.
Probably, it makes sense to pass to a ``generic fiber $\mathrm{Spa}(K)^{\rm HT}$'' of $Y$ and define a reasonable notion of perfect complexes on $\mathrm{Spa}(K)^{\rm HT}$ (for the analytic topology) in such a way that
\[
  \calPerf(\mathrm{Spa}(K)^{\rm HT})\cong 2\text{-}\varinjlim\limits_{L/K} \calPerf([\mathrm{Spf}(\mathcal{O}_L)^{\rm HT}/\mathrm{Gal}(L/K)])[1/p],
\]
cf.\ \Cref{sec:c_k-semil-galo-2-generic-fiber-of-Hodge--Tate-stack}.
Then \Cref{full-description-of-perf-spd-k} could more cleanly be stated as an equivalence.
\[
  \calPerf(\mathrm{Spa}(K)^{\rm HT})\cong \calPerf(\Spa (K)_v).
\]
This would yield a description of v-bundles on $\Spa(K)$ in terms of the \'etale/analytic topology on a certain analytic stack over $\Spa(K)$. We plan to come back to this question in future work.
\end{remark}

\section{The $\mathrm{v}$-Picard group of $p$-adic fields}
\label{sec:v-picard-group-of-local-fields}

As pointed out in the introduction, \Cref{sec:c_k-semil-galo-1-pullback-fully-faithful-up-to-quasi-isogeny} (together with \Cref{sec:c_k-semil-galo-3-description-of-essential-image}) can be regarded as an analogue of the local Simpson correspondence, in that it relates ``small'' semilinear $C$-representations of $G_K$ to ``small'' Sen modules.

\Cref{full-description-of-perf-spd-k} provides (formally) a description of all $G_K$-representations. One might still ask whether there exists a simpler, more classical description. As already mentioned in the introduction, one usually cannot hope for an \textit{equivalence} of categories between v-bundles and Sen modules, but is it still possible to give a more precise relation between the whole two categories?

As a first partial answer in this direction, we note that it is indeed possible in general to extend the local correspondence between small objects, by the following immediate consequence of \Cref{sec:c_k-semil-galo-3-description-of-essential-image} and \Cref{sec:c_k-semil-galo-1-pullback-fully-faithful-up-to-quasi-isogeny}:
\begin{corollary}\label{c:nearly-HT-into-Sen-modules}
	There is a natural fully faithful functor\footnote{We note that the notion of ``nearly Hodge--Tate'' for a representation is independent of any choice and hence passes to an intrinsic notion for v-bundles on $\Spa(K)$.}
	\[\Bigg\{\begin{array}{@{}c@{}l}\text{nearly Hodge--Tate}\\\text{v-vector bundles on $\Spa(K)$}\end{array}\Bigg\}
	\hookrightarrow 
	\Bigg\{\begin{array}{@{}c@{}l}\text{Sen modules over K}\end{array}\Bigg\}.\]
\end{corollary}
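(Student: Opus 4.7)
The plan is to define the functor as the composition $\beta_\pi \circ \alst_K^{-1}$ for a fixed choice of uniformizer $\pi\in \mathcal{O}_K$, where $\alst_K^{-1}$ denotes a quasi-inverse of $\alst_K$ on its essential image. Since $\alst_K$ is fully faithful by \Cref{sec:c_k-semil-galo-1-pullback-fully-faithful-up-to-quasi-isogeny}, and its essential image on the heart is exactly the nearly Hodge--Tate representations by \Cref{sec:c_k-semil-galo-3-description-of-essential-image}, this quasi-inverse is well-defined on the source of the desired functor.

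First, I would verify that the preimage lives in degree zero. Given a nearly Hodge--Tate v-vector bundle $V$, the object $\mathcal{E}\in \calPerf(\mathrm{Spf}(\mathcal{O}_K)^{\rm HT})[\tfrac{1}{p}]$ with $\alst_K(\mathcal{E})\simeq V$ is essentially unique; inspecting the explicit construction of the preimage in the proof of \Cref{sec:c_k-semil-galo-3-description-of-essential-image} (where $\mathcal{E}$ is built from the $K$-subspace $M = V^{G_K,\ast}$ equipped with the Sen operator $e\Theta$), the object $\mathcal{E}$ is concentrated in degree zero, i.e.\ $\mathcal{E}\in \Vec(\mathrm{Spf}(\mathcal{O}_K)^{\rm HT})[\tfrac{1}{p}]$. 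Applying $\beta_\pi$ from \Cref{sec:an-analytic-variant-corollary-analytic-variant} then yields $(M,\Theta_\pi)=\beta_\pi(\mathcal{E})$, again concentrated in degree zero, which is precisely a finite-dimensional $K$-vector space $M$ equipped with a $K$-linear endomorphism $\Theta_\pi$. This is by definition a Sen module over $K$ (differing from the classical Sen operator only by the scalar $e=E'(\pi)$, as recorded in \Cref{sec:c_k-semil-galo-3-description-of-essential-image}).

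Full faithfulness is then immediate, since both $\alst_K^{-1}$ (restricted to its essential image) and $\beta_\pi$ are fully faithful by \Cref{sec:c_k-semil-galo-1-pullback-fully-faithful-up-to-quasi-isogeny} and \Cref{sec:an-analytic-variant-corollary-analytic-variant} respectively. There is no real obstacle: the content of the corollary is to package the two main theorems in a way that forgets the topological nilpotence condition of \Cref{sec:an-analytic-variant-corollary-analytic-variant} on the target, which is automatic on the image by the nearly Hodge--Tate hypothesis. The only minor point worth flagging is that the construction depends on the choice of uniformizer $\pi$, but any choice yields a functor of the required form; different choices produce naturally equivalent functors after passage to the intrinsic classical Sen operator $\Theta=\Theta_\pi/e$, as discussed in \Cref{sec:comp-diff-unif}.
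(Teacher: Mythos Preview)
Your proposal is correct and follows exactly the approach the paper indicates: the paper states this corollary as an ``immediate consequence of \Cref{sec:c_k-semil-galo-3-description-of-essential-image} and \Cref{sec:c_k-semil-galo-1-pullback-fully-faithful-up-to-quasi-isogeny}'' without giving a written proof, and your plan $\beta_\pi\circ\alst_K^{-1}$ is precisely what is intended. Your additional care about degree-zero preservation and the dependence on $\pi$ fills in details the paper leaves implicit.
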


One might hope optimistically that there is an equivalence between all v-bundles and all Higgs bundles, like in the geometric situation. 
However, this is not the case.
Indeed, the goal of this section is to prove the following result, which evidences that over a local field, there are always ``fewer v-vector bundles than Higgs bundles''.
\begin{theorem}\label{t:Picv}
	For any complete discretely valued field $K|\Q_p$, there is a short exact sequence
	\[ 0\to \Pic_v(K)\xrightarrow{\HT\log} K\to\Br(K)[p^\infty]\to 0\]
	that is functorial in $K$.
	If $K|\Q_p$ is finite, then $\Br(K)[p^\infty]\cong \Q_p/\Z_p$ by local class field theory and the last map is given by
	\[\tfrac{u}{p}\mathrm{Tr}_{K|\Q_p}: K\to \Q_p/\Z_p\]
        for some $u\in \Z_p^\times$, which is independent of $K$.
\end{theorem}
The map $\HT\log$ will be defined in the proof. 
It has already been described by Sen \cite[Theorem~7']{sen1980continuous}, as well as by Serre \cite[\S III, A2, Proposition~2]{serre1997abelian}, in terms of the continuous cohomology $\Pic_v(K)=H^1(G_K,C^\times)$. 
Sen also shows that if the residue field is algebraically closed, then $\HT\log$ is an isomorphism \cite[Theorem~9']{sen1980continuous}. 
From the perspective of \Cref{t:Picv}, this is because $\Br(K)=1$ in this case.
Beyond this case, we are not aware of any previous description of the cokernel of $\HT\log$.
\begin{proof}
	Consider the morphism of sites $\nu:\Spa(K)_v\to \Spa(K)_{\et}$. 
By Tate's cohomological result, we have $R^1\nu_{\ast}\O=\O$. 
We instead consider the sheaf $\Gm$ on $\Spa(K)_v$ defined by $T\mapsto \O(T)^\times$. 
Following \cite[\S III, A2]{serre1997abelian} or alternatively arguing as in \cite[\S2]{vlinebundles}, we see:
	
	\begin{lemma}
		The exponential induces a natural isomorphism $R^1\nu_{\ast}\Gm=\O$.
	\end{lemma}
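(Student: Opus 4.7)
The plan is to reduce the statement to Tate's theorem $R^1\nu_\ast\mathcal{O}=\mathcal{O}$ via the $p$-adic exponential, in parallel with the argument of \cite[\S2]{vlinebundles}. For each $n\geq 1$, let $U^n\subseteq \mathbb{G}_m$ denote the v-subsheaf sending $T\mapsto 1+p^n\mathcal{O}^+(T)$. For $n$ sufficiently large (e.g.\ $n>\tfrac{1}{p-1}$), I would first check that the usual power series $\exp$ and $\log$ converge on sections and define mutually inverse isomorphisms of v-sheaves of abelian groups
\[ \exp\colon p^n\mathcal{O}^+\xrightarrow{\sim} U^n,\qquad \log\colon U^n\xrightarrow{\sim} p^n\mathcal{O}^+.\]
Since $p^n\mathcal{O}^+\hookrightarrow\mathcal{O}$ becomes an isomorphism after inverting $p$, applying $R\nu_\ast$ produces a canonical identification $R^i\nu_\ast U^n\cong R^i\nu_\ast\mathcal{O}$ for every $i\geq 0$, transported through $\log$.

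It then suffices to show that the quotient $Q^n:=\mathbb{G}_m/U^n$ has $R^i\nu_\ast Q^n=0$ for $i\geq 1$. Its stalks at a geometric point fit into the short exact sequence
\[ 1\to \mathcal{O}_C^\times/(1+p^n\mathcal{O}_C)\to C^\times/(1+p^n\mathcal{O}_C)\to v(C^\times)\to 0,\]
where $v(C^\times)\cong \mathbb{Q}$ is uniquely divisible with trivial $G_K$-action and so has vanishing Galois cohomology in positive degrees, and where the unit piece $\mathcal{O}_C^\times/(1+p^n\mathcal{O}_C)$ admits a finite filtration whose graded pieces are either the residue-field units $\overline{k}^\times$ (an étale sheaf on $\Spa(K)$, so with vanishing higher $R^i\nu_\ast$) or copies of $\mathcal{O}^+/p\mathcal{O}^+$ (whose higher v-cohomology is almost zero by Scholze's almost purity, hence zero after rationalising). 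Assembling these vanishings through the associated long exact sequences gives $R^i\nu_\ast Q^n=0$ for $i\geq 1$.

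Combining the two inputs, the long exact sequence of $R^\bullet\nu_\ast$ attached to $1\to U^n\to \mathbb{G}_m\to Q^n\to 1$ then collapses to a natural isomorphism
\[ R^1\nu_\ast\mathbb{G}_m\cong R^1\nu_\ast U^n\xisomarrow{\log} R^1\nu_\ast\mathcal{O}=\mathcal{O},\]
induced by $\log$, which is the claim of the lemma. The main obstacle is verifying the vanishing of $R^i\nu_\ast Q^n$ in positive degrees — especially the careful handling of the unit filtration quotients at the perfectoid level — but this is the substance of the analysis in \cite[\S2]{vlinebundles} and transposes to the present arithmetic setting $X=\Spa(K)$ without essential change, since everything at the perfectoid level reduces to continuous Galois cohomology of $G_K$.
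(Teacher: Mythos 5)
Your overall strategy — pick a subgroup of $\Gm$ on which $\exp/\log$ control things, filter the quotient through residue-field units and the value group, and reduce to Tate — is the same shape as the paper's argument. But the specific choices you make introduce genuine gaps that the paper's slightly different choices were designed to avoid.

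The crucial point is the choice of subgroup. You take $U^n=1+p^n\O^+$ precisely so that $\exp$ and $\log$ converge termwise, giving an honest isomorphism $\log\colon U^n\cong p^n\O^+$. But this lands you in $p^n\O^+$, not $\O$, and the step ``since $p^n\O^+\hookrightarrow\O$ is an isomorphism after inverting $p$, we get $R^i\nu_*U^n\cong R^i\nu_*\O$'' is not legitimate: $R^1\nu_*\O^+$ is only \emph{almost} isomorphic to $\O^+$ (integrally, Tate's theorem leaves bounded $p$-power torsion, cf.\ \Cref{p:Tate-thm-integral-version}), so the map $R^1\nu_*(p^n\O^+)\to R^1\nu_*\O$ has nontrivial kernel and cokernel. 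You cannot rationalise your way around this, because the lemma asserts an exact identity $R^1\nu_*\Gm=\O$, and $\Gm$ is a multiplicative sheaf with no rationalisation to retreat to. The paper's proof sidesteps this by taking the \emph{larger} subgroup $1+\mathfrak m\O^+$: over $C$ algebraically closed the logarithm $1+\mathfrak m_C\to C$ is \emph{surjective}, with kernel exactly the torsion $\mu_{p^\infty}$, and $\mu_{p^\infty}$ is an \'etale sheaf, so $R^i\nu_*\mu_{p^\infty}=0$ for $i\geq 1$. The exponential sequence $0\to\mu_{p^\infty}\to 1+\mathfrak m\O^+\to\O\to 0$ then gives $R^1\nu_*(1+\mathfrak m\O^+)\xrightarrow{\sim}R^1\nu_*\O=\O$ on the nose, with no ``almost'' anywhere. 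Giving up termwise convergence of $\exp$ in exchange for this exactness is the whole trick, and your variant loses it.

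Two smaller issues in the same spirit. First, your filtration of $\O_C^\times/(1+p^n\O_C)$ omits the graded piece $(1+\mathfrak m_C)/(1+p\O_C)\cong\mathfrak m_C/p\O_C$, which is neither $\overline k^\times$ nor a copy of $\O^+/p$; it can be controlled, but it must be mentioned. Second, even for the genuine $\O^+/p$ pieces, ``almost zero, hence zero after rationalising'' again only yields an almost statement, not the exact vanishing of $R^{\geq 1}\nu_*Q^n$ that your long exact sequence requires. The paper instead computes along a single totally ramified perfectoid $\Z_p$-cover $K_\infty|K$ with group $\Delta$, using the two short exact sequences with quotients $k^\times$ and $\Z[\tfrac1p]$ and the exact vanishing $H^1(\Delta,k^\times)=H^1(\Delta,\Z[\tfrac1p])=0$, which avoids any almost-mathematics in this step as well.
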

	\begin{proof}
	It suffices to construct natural isomorphisms
	\[R^1\nu_{\ast}(1+\mathfrak m \O^+)\xrightarrow{\sim}  R^1\nu_{\ast}\Gm\]
	\[\log:R^1\nu_{\ast}(1+\mathfrak m \O^+)\xrightarrow{\sim}  R^1\nu_{\ast}\O.\]
	Namely let $K_\infty|K$ be any totally ramified perfectoid Galois cover with group $\Delta\cong \Z_p$. 
Then the first isomorphism is obtained by applying $H^1_{}(\Delta,-)$ to the short exact sequences
	\[ 0\to \O_{K_\infty}^\times\to K_\infty^{\times} \to \Z[\tfrac{1}{p}]\to 0\]
	\[ 0\to 1+\mathfrak m_{K_\infty}\to \O_{K_\infty}^{\times} \to k^\times\to 0\]
	and using that $H^1_{}(\Delta,k^\times)=\Hom_{\cts}(\Delta,k^\times)=1$ and $H^1_{}(\Delta,\Z[\tfrac{1}{p}])=1$, where $k$ is the residue field. 
The second isomorphism then follows from the logarithm sequence over $C=\widehat{\overline{K}}$.
	\end{proof}

	Since $\Pic_{\et}(K)=1$, it follows that the Leray sequence of $\Gm$ for $\nu$ is of the form
	\[ 1\to \Pic_v(K)\xrightarrow{\HT\log} K\xrightarrow{\partial_K}  H^2_{\et}(K,\mathbb G_m)\to H^2_{v}(K,\mathbb G_m),\]
	functorially in $K$. 
Let $Q:=\operatorname{coker} \HT\log=\ker \partial_K$.
	It follows from the construction that $\HT\log$ admits a  canonical splitting over $p\O_K\subseteq K$ defined by the $p$-adic exponential function, hence $Q$ is a $p$-power torsion group. 
This shows that $Q\subseteq \Br(K)[p^\infty]$.
	
	To see the other containment, it now suffices to prove:
	\begin{lemma}
	$H^2_{v}(K,\mathbb G_m)$ is $p$-torsionfree.
	\end{lemma}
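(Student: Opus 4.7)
The plan is to combine the Leray sequence already established with Hochschild--Serre descent from the maximal unramified extension to reduce the lemma to an explicit containment in the Brauer group.

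First I would note that the four-term Leray sequence used above extends one step further because $H^1_\et(K,\mathcal{O})=0$ by additive Hilbert~90; the net result is the exact sequence
\[ 0 \to \Pic_v(K) \xrightarrow{\HT\log} K \xrightarrow{\partial_K} \Br(K) \to H^2_v(K,\mathbb{G}_m) \to 0, \]
which identifies $H^2_v(K,\mathbb{G}_m)$ with $\Br(K)/\Im\partial_K$. Applying the snake lemma to multiplication by $p$ on the short exact sequence $0\to \Pic_v(K)\to K\to \Im\partial_K\to 0$, and using that $K$ is uniquely $p$-divisible (being a $\mathbb{Q}_p$-vector space), one finds that $\Im\partial_K$ is itself $p$-divisible. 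A direct calculation then gives
\[ \bigl(\Br(K)/\Im\partial_K\bigr)[p] \;=\; \Br(K)[p]/\bigl(\Br(K)[p]\cap \Im\partial_K\bigr), \]
so the lemma reduces to showing the containment $\Br(K)[p]\subseteq \Im\partial_K$.

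To prove this I would use Hochschild--Serre for the v-cover $\Spa(\widehat{K^{\mathrm{nr}}})\to \Spa(K)$, whose Galois group is $G_k=\Gal(\bar k/k)$. Over $\widehat{K^{\mathrm{nr}}}$ the residue field is algebraically closed, so $\Br(\widehat{K^{\mathrm{nr}}})=0$ and Sen's theorem \cite[Theorem~9']{sen1980continuous} gives an isomorphism $\HT\log : \Pic_v(\widehat{K^{\mathrm{nr}}}) \xrightarrow{\sim} \widehat{K^{\mathrm{nr}}}$. Applying the four-term exact sequence above to $\widehat{K^{\mathrm{nr}}}$ yields $H^2_v(\widehat{K^{\mathrm{nr}}},\mathbb{G}_m)=0$, and a parallel argument handles higher degrees. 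The Hochschild--Serre spectral sequence $E_2^{p,q}=H^p_{\cts}(G_k,H^q_v(\widehat{K^{\mathrm{nr}}},\mathbb{G}_m))$ thus has only rows $q=0,1$, with $H^0_v=(\widehat{K^{\mathrm{nr}}})^\times$ and $H^1_v\cong \widehat{K^{\mathrm{nr}}}$ by Sen. Because $\widehat{K^{\mathrm{nr}}}$ is a $\mathbb{Q}_p$-vector space, $H^1_{\cts}(G_k,\widehat{K^{\mathrm{nr}}})$ is $p$-torsion free, so $E_\infty^{1,1}$ contributes no $p$-torsion to $H^2_v(K,\mathbb{G}_m)$; the $d_2$-differential $K=E_2^{0,1}\to E_2^{2,0}=H^2_{\cts}(G_k,(\widehat{K^{\mathrm{nr}}})^\times)\cong \Br(K)$ coincides up to sign with $\partial_K$, so any $p$-torsion in $H^2_v(K,\mathbb{G}_m)$ lies in $(\Br(K)/\Im\partial_K)[p]$.

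The main obstacle is the final step: making $d_2=\partial_K$ explicit enough to verify $\Br(K)[p]\subseteq \Im\partial_K$. The plan is to combine Tate's calculation $H^1_{\cts}(G_K,C)=K$ with the decomposition $C^\times = \mathbb{Q}\oplus \bar k^\times \oplus (1+\mathfrak{m}_C)$ (via the valuation and a Teichm\"uller lift) and the short exact sequence $0 \to \mu_{p^\infty} \to 1+\mathfrak{m}_C \xrightarrow{\log} \mathfrak{m}_C \to 0$ (with $\log$ surjective by taking $p^n$-th roots of $\exp(p^n z)$) to identify $d_2$, up to sign, with the $p$-adic cyclotomic reciprocity map. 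This will show $\Im\partial_K = \Br(K)[p^\infty]$, which both proves the lemma and yields the ``$\supseteq$'' half of the theorem as a by-product.
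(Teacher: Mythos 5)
Your approach is genuinely different from the paper's, and unfortunately it has several gaps that make it unlikely to close without essentially re-deriving the theorem by other means.

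The paper's proof uses the Cartan--Leray spectral sequence for the \emph{perfectoid} $\Delta$-torsor $\Spa(K_\infty)\to\Spa(K)$ (with $\Delta\cong\Z_p$ and $K_\infty$ totally ramified perfectoid), shows $E_2^{\bullet,1}=0$ and $E_2^{n,0}=0$ for $n\geq 2$, deduces $H^2_v(K,\Gm)\hookrightarrow H^2_v(K_\infty,\Gm)$, and then cites the $\mu_p$-purity result of \cite{vcesnavivcius2019purity} to conclude that $H^2_v(K_\infty,\Gm)$ has no $p$-torsion because $H^2_v(K_\infty,\mu_p)=H^2_\et(K_\infty,\mu_p)=0$ for the perfectoid field $K_\infty$. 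This is short and entirely self-contained.

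Your proposal instead aims to prove the stronger (indeed, the full ``$\supseteq$'' half of the theorem) statement $\Br(K)[p]\subseteq\Im\partial_K$ and then deduce the lemma. Along the way you rely on several unjustified steps. First, to extend the four-term Leray sequence to $0\to\Pic_v(K)\to K\to\Br(K)\to H^2_v(K,\Gm)\to 0$, it is not enough to know $E_2^{1,1}=H^1_\et(K,\O)=0$; you also need the $E_2^{0,2}=H^0_\et(K,R^2\nu_*\Gm)$ term to be innocuous, which requires computing $R^2\nu_*\Gm$, and this is not addressed. Second, $\widehat{K^{\mathrm{nr}}}$ is \emph{not} a perfectoid field, so $\Spa(\widehat{K^{\mathrm{nr}}})$ is not an object of $\Spa(K)_v$ and the Hochschild--Serre / Cartan--Leray formalism for the v-site does not directly apply to this cover (the paper is careful to choose the perfectoid $K_\infty$ for exactly this reason). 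Third, the assertion that $H^2_v(\widehat{K^{\mathrm{nr}}},\Gm)=0$ cannot be read off the four-term sequence: that sequence only shows the composite $\Br(\widehat{K^{\mathrm{nr}}})\to H^2_v(\widehat{K^{\mathrm{nr}}},\Gm)$ is zero; to get vanishing of $H^2_v$ you again need control of $E_2^{1,1}$ and $E_2^{0,2}$, and the parenthetical ``a parallel argument handles higher degrees'' is not substantiated. Fourth, and most importantly, the crucial inclusion $\Br(K)[p]\subseteq\Im\partial_K$ is only sketched as a plan (identifying $d_2$ with the cyclotomic reciprocity map) and never carried out. Since that inclusion is precisely the arithmetic content that the lemma is being used to establish in the paper's proof of \Cref{t:Picv}, leaving it unproved makes the argument circular as a proof of the lemma: you would effectively be proving the theorem to prove the lemma, and the main difficulty would remain. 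The paper's proof of the lemma, by contrast, does not touch the reciprocity map at all and relies only on the geometric input that $K_\infty$ is perfectoid together with $\mu_p$-purity.
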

	\begin{proof}
		The Cartan--Leray sequence of the $\Delta$-torsor $\Spa(K_\infty)\to \Spa(K)$ is of the form
		\[ E^{nm}_2=H^n_{}(\Delta,H^m_v(K_\infty,\mathbb G_m))\Rightarrow H^{n+m}_v(K,\mathbb G_m).\]
		Since $H^1_v(K_\infty,\Gm)=H^1_{\et}(K_\infty,\Gm)=1$, the column $E^{\bullet 1}_2$ vanishes. 
Using the above short exact sequences, we moreover see that for any $n\geq 2$,
		\[ H^n_{}(\Delta,K_\infty^\times)=H^n_{}(\Delta,1+\mathfrak m_{K_\infty})=1,\]
		because we can write $1+\mathfrak m_{K_\infty}=\varinjlim_{n\to \infty}\{x\in K_\infty\mid |x-1|\leq |p|^\frac{1}{p^n}\}$ as a colimit of $p$-complete $\Z_p$-modules, for which $H^n(\Delta,-)$ vanishes for $n\geq 2$. 
Thus the first column $E^{\bullet 0}_2$ vanishes in degree $n\geq 2$. 
It follows that the natural map
		\[H^2_{v}(K,\mathbb G_m)\hookrightarrow H^2_{v}(K_\infty,\mathbb G_m)\]
		is injective. 
But $H^2_{v}(K_\infty,\mathbb G_m)$ is $p$-torsionfree since $H^2_{v}(K_\infty,\mu_p)=H^2_{\et}(K_\infty,\mu_p)$ vanishes for the perfectoid field $K_\infty$ by \cite[Theorem~4.10]{vcesnavivcius2019purity}.
	\end{proof}
	
	We thus get the first exact sequence, and it remains to identify the transition morphism in the case of local fields:
	recall from local class field theory that there is a natural isomorphism
	\[ \mathrm{inv}_K:\Br(K)\to \Q/\Z\]
	such that for any extension $L|K$, we have  $\mathrm{inv}_L(x)=[L:K]\cdot \mathrm{inv}_K(x)$ for any $x\in \Br(K)$. 
Via this identification, we obtain a morphism, functorial in $K$,
	\[ \partial_K:K\to\Q/\Z.\]
	It follows formally that this morphism must factor through the trace $\mathrm{Tr}_{K|\Q_p}:K\to \Q_p$: namely, assume without loss of generality that $K|\Q_p$ is Galois, then the functoriality in $K$ means that $\partial_K$ is $G:=\Gal(K|\Q_p)$-equivariant. 
Let $n:=|G|=[K:\Q_p]$, then for any $x\in K$, we thus have
	\[ \partial_K(x)=\partial_K(\sum_{\sigma\in G}\frac{1}{n}x)=\sum_{\sigma\in G}\partial_K(\frac{x}{n})=\sum_{\sigma\in G}\partial_K(\frac{\sigma(x)}{n})=\partial_K(\sum_{\sigma\in G}\frac{\sigma(x)}{n})=\partial_K(\frac{1}{n}\mathrm{Tr}_{K|\Q_p}(x)).\]
	Since $\frac{1}{n}\mathrm{Tr}_{K|\Q_p}(x)\in \Q_p$, this now equals
	\[ =[K:\Q_p]\partial_{\Q_p}(\frac{1}{n}\mathrm{Tr}_{K|\Q_p}(x))=\partial_{\Q_p}(\mathrm{Tr}_{K|\Q_p}(x)).\]
	We can thus reduce to computing the homomorphism
	\[ \partial_{\Q_p}:\Q_p\to \Q_p/\Z_p\]
	which is necessarily given by multiplication by some uniquely determined $a\in \Q_p$, followed by the quotient map. We claim that $a\in \frac{1}{p}\Z_p^\times$, or equivalently, $\mathrm{ker}(\Q_p\overset{\cdot a}{\to}\Q_p\to \Q_p/\Z_p)=p\Z_p$. This implies the remaining assertion.
It therefore suffices in a second step to reduce to $K=\Q_p(\mu_p)$, where some computations become easier.
	
	In order to determine the element $a$, we consider the Galois cohomology of the short exact sequence
	\[ 0\to \mu_{p^\infty}\to 1+\mathfrak m_C\to C\to 0.\]
	Let $G_K:=\Gal_{\mathrm{}}(C|K)$, then by tracing through the identification $R^1\nu_{\ast}\O^\times=\O$, we easily verify that this induces an exact sequence
	\[
	\begin{tikzcd}
		{H^1(G_K,1+\mathfrak m_C)} \arrow[d, two heads] \arrow[r,"\log"] & {H^1(G_K,C)} \arrow[d,"\sim"] \arrow[r,"\partial'"] & {H^2(G_K,\mu_{p^\infty})} \arrow[d,hook] \\
		\Pic_v(K) \arrow[r]                              & K \arrow[r]                      & H^2(G_K,C^\times),                             
	\end{tikzcd}
	\]
	in which the first vertical map is surjective, the second is an isomorphism, and the third is injective with image the $p$-primary part of the Brauer group.
	
	It thus suffices to identify the morphism $\partial'$. 
This is a boundary morphism for Galois cohomology, which we can make explicit: Let $\Q_p^{cyc}| \Q_p(\mu_p)=K$  be the completed cyclotomic extension with Galois group $\Gamma:=\Z_p$. 
Then any element of $H^1(G_K,C)$ is represented by a continuous homomorphism $\Gamma\to K$ given by $\gamma\mapsto \gamma \cdot b$ for some element $b\in K$. 
We note that since $\log$ is split by $\exp$ over $p\O_K$, it is clear that $\partial'(b)=0$ for $b\in p\O_K$. 
	On the other hand, let $b\in K$ be any element and let $y\in 1+\mathfrak m_C$ be a preimage of $b$ under $\log$. 
Then $\partial'$ sends $y$ to the $2$-cocycle $\Gamma^2\to \mu_{p^\infty}$
	\begin{equation}\label{eq:expl-descr-of-2-cocycle-obstr-for-Pic}
	(g,h)\mapsto y^g \cdot g^{\ast}(y^h)\cdot (y^{gh})^{-1}=(g^\ast y/y)^h.
	\end{equation}
	We see from this explicit description that this $2$-cocycle can be captured as follows: consider the extension
	\[
	\begin{tikzcd}
		{\Q_p^\cycl(y^{1/p^\infty})} \arrow[d, "\Delta", no head] \arrow[dd, "H", no head, bend left=49] \\
		\Q_p^\cycl \arrow[d, "\Gamma", no head]                                                                         \\
		K                                                                                                          
	\end{tikzcd}\]
with Galois groups as indicated. 
Due to the functoriality properties of $\mathrm{inv}$, we know that any $p^n$ torsion class in $\Br(K)$ is killed by an extension of $K$ of degree $p^n$. 
It follows that the kernel of the map
\[\Br(K)[p^\infty]=H^2_{v}(\Spa(K),\mu_{p^\infty})\to H^2_{v}(\Spa(\Q_p^\cycl(y^{1/p^\infty})),\mu_{p^\infty})\]
contains all of $\Br(K)[p^\infty]$. 
By a Hochschild--Serre sequence, and due to vanishing of $H^i_{}(\Gamma,-)$ and $H^i_{}(\Delta,-)$ for any $i\geq 2$, we get a natural identification
\begin{equation}\label{eq:cycl-descr-of-Br}
\Br(K)[p^\infty] \xrightarrow{\sim} H^1_{}(\Gamma,H^1_{}(\Delta,\mu_{p^\infty}))=\Q_p/\Z_p,
\end{equation}
where the last map comes from the identification $\Delta=\Z_p(1)$.

It is now clear from construction that the  explicit cocycle (\eqref{eq:expl-descr-of-2-cocycle-obstr-for-Pic}) defines an element in the middle term. 
To see when this vanishes in $\Br(K)$, it thus suffices to see when the $1$-cocycle
\[ g\mapsto g^{\ast}y/y\]
is non-trivial. 
Via the identification from the Kummer sequence
\[ H^1(\Q_p^\cycl,\mu_{p^\infty})=\Q_p^{\cycl\times}\otimes \Q_p/\Z_p,\]
we see that this happens if and only if $y$ is already contained in $\Q_p^\cycl$. 
We claim that this is never the case for $b\not\in p\Z_p$, for which it suffices to prove that no element $b\in K$ with $|b|=|p|$ is such that $\exp(b)$ has a $p$-th  root in $\Q_p^\cycl$. 
For this we use that the natural map
\[ \Q_p^\times/ \Q_p^{\times p}\hookrightarrow\Q_p^{\cycl\times}/ \Q_p^{\cycl\times p}\]
is injective: Its kernel can be described via the Kummer sequence and the Cartan--Leray sequence as the cohomology group $H^1_{}(\Z_p^\times,\mu_{p})$ for the action of $\Z_p^\times$ on $\mu_{p}$ via $c\cdot\zeta_p:=\zeta_p^c$, $c\in \Z_p^\times$, which vanishes. 
Thus $\exp(b)$ has a $p$-th root in $\Q_p^{\cycl\times}$ if and only if it has as in $\Q_p$. 
Such a root cannot exist due to the assumption that $|b|=|p|$, which would force $0<|1-\exp(b^{1/p})|<1$, a contradiction to $b^{1/p}\in \Q_p$.
\end{proof}

\begin{remark}
	Does the natural map \eqref{eq:cycl-descr-of-Br} agree with (the $p$-primary map of) the canonical map $\mathrm{inv}:\Br(K)\to \Q/\Z$ from local class field theory? This seems plausible, but a bit cumbersome to prove since this morphism is constructed using the maximal unramified extension rather than the highly ramified one we consider above. 
In any case, we do not need it for the description of the image of the map $\HT\log:\Pic_v(K)\to K$.
\end{remark}
Since any morphism between v-line bundles on $K$ is either $0$ or an isomorphism, and the same is true for Sen modules,  we deduce from \Cref{t:Picv} that in the case of rank one,  \Cref{c:nearly-HT-into-Sen-modules} yields:
\begin{corollary}\label{c:cor-to-PicvK}
	There is a fully faithful functor, which is however not essentially surjective:
		\[\Big\{\begin{array}{@{}c@{}l}\text{v-line bundles on $\Spa(K)$}\end{array}\Big\}
	\hookrightarrow 
	\Big\{\begin{array}{@{}c@{}l}\text{Sen modules over K}\\\text{of rank one}\end{array}\Big\}.\]
\end{corollary}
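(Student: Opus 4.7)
The plan is to construct the functor as the rank-one instance of the Sen-operator correspondence and read both assertions off from \Cref{t:Picv}. First I would construct the functor directly: to any v-line bundle $L$ on $\Spa(K)$, classical Sen theory attaches a Sen operator which, in rank one, is a scalar; as it must be $\Gamma$-equivariant for $\Gamma=\Gal(K_\infty|K)$ and $K_\infty^\Gamma=K$, this scalar descends to an element $\theta_L\in K$, giving a rank-one Sen module $(K,\theta_L)$. On morphisms: any nonzero morphism of v-line bundles is an isomorphism, and $\Aut(L)=H^0_v(\Spa K,\Gm)=K^\times$; likewise $\Aut((K,\theta_L))=K^\times$ since endomorphisms automatically commute with a scalar, and the functor sends $\lambda\in K^\times$ to $\lambda$.

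The induced map $\Pic_v(K)\to K$ on isomorphism classes then coincides with the map $\HT\log$ of \Cref{t:Picv}. This is essentially \cite[Theorem~7']{sen1980continuous}: $\HT\log$ is by construction the boundary of $[L]$ under the exponential sequence $0\to \O\to\Gm\to\ldots$ composed with $H^1_v(\Spa K,\O)\cong K$, and this boundary is known to recover the Sen operator attached to the corresponding semilinear character of $G_K$. From this identification, full faithfulness is automatic: by \Cref{t:Picv}, $\HT\log$ is injective, so the functor is injective on isomorphism classes, and on each pointwise $\Aut$ group it is the identity $K^\times\to K^\times$; combined with the preamble observation that nonzero morphisms are isomorphisms on both sides, this yields bijectivity on Hom-sets.

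For failure of essential surjectivity, \Cref{t:Picv} identifies the cokernel of $\HT\log$ with $\Br(K)[p^\infty]$, which is nonzero for every $p$-adic field $K$ (canonically $\cong\Q_p/\Z_p$ via $\tfrac{1}{p}\mathrm{Tr}_{K|\Q_p}$ when $[K:\Q_p]<\infty$), so any rank-one Sen module $(K,\theta)$ with $\theta\notin\ker(K\to\Br(K)[p^\infty])$ lies outside the essential image. The only mild difficulty is the identification of the Sen-operator functor with $\HT\log$ in the middle step; everything else is formal manipulation of groupoids with $K^\times$-valued automorphism groups.
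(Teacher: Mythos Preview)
Your argument is essentially the same as the paper's: both use \Cref{t:Picv} together with the observation that nonzero morphisms between line bundles (on either side) are isomorphisms, so that full faithfulness reduces to injectivity of $\HT\log$ on isomorphism classes plus the trivial check on automorphism groups $K^\times\to K^\times$. The paper phrases this as the rank-one case of \Cref{c:nearly-HT-into-Sen-modules}, while you construct the Sen functor directly, but the content is the same.

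One genuine correction: your assertion that ``$\Br(K)[p^\infty]$ is nonzero for every $p$-adic field $K$'' is false. As the paper itself remarks just after \Cref{t:Picv}, when the residue field of $K$ is algebraically closed one has $\Br(K)=1$, so $\HT\log$ is an isomorphism and the functor \emph{is} essentially surjective in that case. The failure of essential surjectivity therefore requires $\Br(K)[p^\infty]\neq 0$, which holds e.g.\ for $K$ a finite extension of $\Q_p$. You should restrict the non-surjectivity claim accordingly (the paper's own statement is admittedly terse on this point, but the surrounding discussion makes the intended scope clear).

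A minor quibble: your description of $\HT\log$ as ``the boundary of $[L]$ under the exponential sequence $0\to\O\to\Gm\to\ldots$'' is not quite right, since there is no such short exact sequence globally. The map arises from the identification $R^1\nu_\ast\Gm\cong\O$ in the Leray sequence, which is \emph{induced} by the exponential on a neighbourhood of $1$; this is what the paper proves in the lemma inside the proof of \Cref{t:Picv}.
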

 \Cref{c:cor-to-PicvK} gives another reason why we cannot expect to have an equivalence of categories between all v-vector bundles and all Sen modules over $\Spa(K)$ that is compatible with both the functor from \Cref{c:nearly-HT-into-Sen-modules} as well as passage to finite extensions. 
In particular, this means that the immediate analogue of the global Simpson correspondence fails in the arithmetic setting of smooth proper rigid spaces over $K$. Moreover, it shows that the relation between v-vector bundles and Sen modules is very subtle and sees non-trivial arithmetic information about $K$ like the $p$-primary part of its Brauer group already in the case of rank one.

\end{document}